\documentclass{amsart}
\usepackage{amsfonts,amssymb,amsthm,cite,amsmath,amstext,amsbsy,bm}
\usepackage[toc,page]{appendix}
\usepackage{paralist}
\usepackage{datetime}
\usepackage{color}
\usepackage[dvipsnames]{xcolor}
\usepackage[colorlinks,linkcolor=Bittersweet,citecolor=Cerulean]{hyperref}
\usepackage{mathrsfs}
\usepackage{mathtools}
\usepackage{euscript}
\usepackage[all]{xy}
\usepackage{dsfont}
\usepackage{graphicx}

\usepackage[text={170mm,260mm},centering]{geometry}
\usepackage{tikz-cd}

\usepackage{thmtools}
\usepackage{enumitem}
\usepackage{letltxmacro}

\declaretheorem[
name=Theorem,
%refname={theorem,theorems},        %Lower Case Versions of Theorem Type
Refname={Theorem,Theorems},
numberwithin=section]{thm}
\declaretheorem[
name=Proposition,
%refname={lemma,lemmas},
Refname={Proposition,Propositions},
sibling=thm]{proposition}
\declaretheorem[
name=Definition,
%refname={lemma,lemmas},
Refname={Definition,Definitions},
sibling=thm]{dfn}
\declaretheorem[
name=Corollary,
%refname={lemma,lemmas},
Refname={Corollary,Corollaries},
sibling=thm]{cor}

\usepackage{nameref}
\usepackage[capitalize]{cleveref}

\theoremstyle{plain}
\newlist{thmlist}{enumerate}{1}
\setlist[thmlist]{label=(\roman{thmlisti}), ref=\thethm.(\roman{thmlisti}),noitemsep}

\addtotheorempostheadhook[thm]{\crefalias{thmlisti}{thm}}

\addtotheorempostheadhook[proposition]{\crefalias{thmlisti}{proposition}}

\addtotheorempostheadhook[dfn]{\crefalias{thmlisti}{dfn}}

\def\A{\mathbb{A}}
\def\ep{\varepsilon}
\def\O{\mathscr{O}}
\def\P{\mathbb{P}}
\def\V{\mathbb{V}}
\def\Si{\Sigma}
\def\D{D}
\def\c{n}
\def\qb{\mathbb{Q}}
\def\p{p}
\def\pr{{\rm pr}}
\def\X{\mathscr{X}}
\def\sm{{\rm sm}}
\def\Supp{{\rm Supp}}
\def\res{{\rm res}}
\def\Res{{\rm Res}}
\def\G{\mathbb{G}}
\def\codim{{\rm codim}}
\def\lb{\mathbb{L}}
\def\rank{{\rm rank}}
\usepackage{cleveref}
\crefname{lem}{Lemma}{Lemmas}
\crefname{thm}{Theorem}{Theorems}
\crefname{proposition}{Proposition}{Propositions}
\crefname{dfn}{Definition}{Definitions}
\crefname{rem}{Remark}{Remarks}
\crefname{cor}{Corollary}{Corollaries}
\crefname{corx}{Corollary}{Corollaries}

\makeatletter
\newcommand*{\rom}[1]{\expandafter\@slowromancap\romannumeral #1@}
\makeatother

\makeatletter
\newsavebox{\@brx}
\newcommand{\llangle}[1][]{\savebox{\@brx}{\(\m@th{#1\langle}\)}% 
  \mathopen{\copy\@brx\kern-0.5\wd\@brx\usebox{\@brx}}}
\newcommand{\rrangle}[1][]{\savebox{\@brx}{\(\m@th{#1\rangle}\)}%
  \mathclose{\copy\@brx\kern-0.5\wd\@brx\usebox{\@brx}}}
\makeatother

\newtheorem{thmx}{Theorem}

%\newenvironment{claim}[1]{\par\noindent\underline{\bf Claim :}  \space#1} {}

%creat claim environment

%%% Chinese character %%%
%\usepackage[nocap]{ctex}

\numberwithin{equation}{section}
%\textwidth 5.5 true in
%\textheight 8.5 true in
%\topmargin -0.5 true in
%\oddsidemargin 0.35 true in
%\evensidemargin 0.35 true in

%\textwidth 6.4 true in
%\textheight 10 true in
%\topmargin -0.5 true in
%\oddsidemargin 0.00 true in
%\evensidemargin 0.35 true in
%\marginparwidth = 54pt

\setcounter{section}{0}
\footskip=30pt
\definecolor{plum}{rgb}{0.8,0.2,0.8}

\title[Positivity of the log  cotangent bundle]{\textbf{\sc On the positivity of the logarithmic  cotangent bundle}}

\author{Damian Brotbek}

\address{Institut de Recherche Math\'ematique Avanc\'ee\\
	Universit\'e de Strasbourg}
\email{brotbek@math.unistra.fr}
\author{Ya Deng}
\address{Institut de Recherche Math\'ematique Avanc\'ee\\
	Universit\'e de Strasbourg}
\email{deng@math.unistra.fr;\ dengya.math@gmail.com}
%\dedicatory{Dedicated to Professor Jean-Pierre Demailly on the occasion of his 60th birthday}
\begin{document}
	\date{\today}
\maketitle

%%\theoremstyle{definition}
%%\newtheorem*{pf}{Proof}

%\newtheorem{thm}{Theorem}[section]
%\newenvironment{thmbis}[1]
%{\renewcommand{\thethm}{\ref{#1}\emph{bis}}%
%	\addtocounter{thm}{-1}%
%	\begin{thm}}
%	{\end{thm}}

\newtheorem{rem}[thm]{Remark}
\newtheorem{problem}[thm]{Problem}
\newtheorem{conjecture}[thm]{Conjecture}
\newtheorem{lem}[thm]{Lemma}
\newtheorem{example}[thm]{Example}
\newtheorem{propx}[thmx]{Proposition}

\newtheorem{claim}{\underline{\textbf{Claim}}}[section]

\def\oc{\mathscr{O}} \def\oce{\mathcal{O}_E} \def\xc{\mathcal{X}}
\def\ac{\mathcal{A}} \def\rc{\mathcal{R}} \def\mc{\mathcal{M}}
\def\wc{\mathcal{W}} \def\fc{\mathcal{F}} \def\cf{\mathcal{C_F^+}}
\def\jc{\mathcal{J}}
\def\jr{\mathscr{J}}
\def\mr{\mathscr{M}}
\def\ic{\mathcal{I}}
\def\kc{\mathcal{K}}
\def\lc{\mathcal{L}}
\def\vc{\mathcal{V}}
\def\xc{\mathcal{X}}
\def\yc{\mathcal{Y}}
\def\zc{\mathcal{Z}}
\def\af{\mathbf{a}}
\def\df{\mathbf{d}}
\def\pr{{\rm pr}}

\def\cb{\mathbb{C}}
\def\ib{\mathbb{I}}
\def\ab{\mathbb{A}}
\def\lbb{\mathbb{L}}
\def\vol{\operatorname{vol}}
\def\ord{\operatorname{ord}}
\def\Im{\operatorname{Im}}
\def\dm{\mathrm{d}}
\def\isc{\mathscr{I}}

\def\as{{a^\star}} \def\es{e^\star}

\def\tl{\widetilde} \def\tly{{Y}} \def\om{\omega}

\def\cb{\mathbb{C}} \def\nb{\mathbb{N}}
\def\gb{\mathbb{G}} \def\nbs{\mathbb{N}^\star}
\def\pb{\mathbb{P}} \def\pbe{\mathbb{P}(E)} \def\rb{\mathbb{R}}
\def\zbb{\mathbb{Z}}
\def\ys{\mathscr{Y}}
\def\js{\mathscr{J}}
\def\is{\mathscr{I}}
\def\ls{\mathscr{L}}
\def\ds{\mathscr{D}}
\def\hs{\mathscr{H}}
\def\xs{\mathscr{X}}
\def\zs{\mathscr{Z}}
\def\grs{{\rm{Gr}}_{k+1}(V)}
\def\ebf{\mathbf{e}}
\def\yf{\mathfrak{Y}}
\def\gf{\mathbf{G}}
\def\bff{\mathbf{b}}
\def\wk{\mathfrak{w}}
\def\bk{\mathfrak{b}}
\def\ik{\mathfrak{I}}
\def\sif{\bm{\sigma}}

\def\hb{\bold{H}} \def\fb{\bold{F}} \def\eb{\bold{E}}
\def\pbb{\bold{P}}

\def\nab{{\nabla}} \def\n{|\!|} \def\spec{\textrm{Spec}\,}
\def\cinf{\mathcal{C}_\infty} \def\d{\partial}
\def\db{{\partial}}
\def\hess{\sqrt{-1}\partial{\partial}}
\def\zb{{z}} \def\lra{\longrightarrow}
\def\tb{{t}}
\def\sn{\sqrt{-1}}
\begin{abstract}
The aim of this work is to construct examples of pairs whose logarithmic cotangent bundles have  strong positivity properties.
These examples are constructed from any smooth \(n\)-dimensional complex projective varieties by considering the sum of at least \(n\) general sufficiently ample hypersurfaces.
\end{abstract}

%\tableofcontents

%%%%%%%%%%%%%%%%%%%%%%%%%%%%%%%%%%%%%%%%%%%%%

\section{Introduction}
To every smooth pair \((X,D)\) (a smooth projective variety \(X\) with a simple normal crossing divisor \(D\)), one can associate a logarithmic cotangent bundle \(\Omega_X(\log D)\). The properties of this bundle have important consequences concerning the geometry of the pair \((X,D)\). In the present paper, we focus on the positivity properties  of the logarithmic cotangent bundle and in particular on the study in \(X_1(D):=\P(\Omega_X(\log D))\) of the augmented base locus  \(\mathbf{B}_+\big(\O_{X_1(D)}(1)\big)\) of the tautological line bundle \(\O_{X_1(D)}(1)\). There are now several examples of pairs \((X,D)\) for which this base locus is not the entire space \(X_1(D)\) \cite{BPW17,V-Z02,Rou03,Bru16,Cad16,B-C17,Nog86}, but we lack  examples of pairs for which one has a stronger control of \(\mathbf{B}_+\big(\O_{X_1(D)}(1)\big)\) when \(\Omega_X\) itself does not have a strong positivity property.

It should be noted that  if \(D\neq\varnothing\) and \(\dim X\geqslant 2\),  this augmented base locus is never empty. In particular, the logarithmic cotangent bundle is not ample. This follows from the existence of residue maps which induce a trivial quotient of the logarithmic cotangent bundle when restricted to the different components of \(D\). Each such quotient produces a positive dimensional subvariety of \(\mathbf{B}_+(\O_{X_1(D)}(1))\).

It is therefore natural to wonder if these are the only general obstructions to the ampleness of the logarithmic cotangent bundle. In the present paper we show that this is the case by producing examples for which the augmented base locus of the tautological line bundle on \(X_1(D)\) is equal to the reunion of all obstructions induced by the residue maps, in which case we say that the logarithmic cotangent bundle is \emph{almost ample}.

\begin{thmx}\label{cor:simple}
Let  $Y$ be a smooth projective variety of dimension $n$ and let \(c\geqslant n\).  Let \(L\) be a very ample line bundle on \(Y\). For any \(m\geqslant (4n)^{n+2}\) and  for general  hypersurfaces \(H_1,\dots,H_c\in |L^m|\), writing \(D=\sum_{i=1}^cH_i\), the  logarithmic cotangent bundle $\Omega_Y(\log D)$ is almost ample. 
\end{thmx}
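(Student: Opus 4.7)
The plan is to translate the statement into a base-locus question on $X_1(D)=\mathbb{P}(\Omega_Y(\log D))$ and then produce, by an explicit Wronskian-type construction, enough global sections of large symmetric powers of $\Omega_Y(\log D)$ twisted by a negative ample line bundle, whose common base locus is forced to lie inside the residue obstruction loci. Concretely, ``almost ample'' means that $\mathbf{B}_+(\O_{X_1(D)}(1))$ is contained in the union $\Sigma\subset X_1(D)$ of the images of $\mathbb{P}(T_{H_I})$ for the various strata $H_I=\bigcap_{i\in I}H_i$ of $D$ (plus the fibers of the natural quotient $\Omega_Y(\log D)\twoheadrightarrow \O_{H_i}$). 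So the goal reduces to: for some ample $A$ on $Y$ and some $k$, the sections of $\mathrm{Sym}^k\Omega_Y(\log D)\otimes A^{-1}$ have base locus inside $\Sigma$.

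First I would fix a decomposition $m=m_1+m_2$ and write each equation defining $H_i$ in the form $\sigma_i=\sum_{\alpha}a_{i,\alpha}\tau^{\alpha}$, where $\{\tau^{\alpha}\}$ is a suitable collection of sections of $L^{m_1}$ chosen large enough to span jets of order $n$ at every point, and $a_{i,\alpha}\in H^{0}(Y,L^{m_2})$ are generic coefficients. The key geometric objects are the logarithmic differentials $d\log\sigma_i=d\sigma_i/\sigma_i$, regarded as sections of $\Omega_Y(\log D)$. Following the Wronskian strategy used in Brotbek's proof of the Kobayashi conjecture for hypersurfaces, I would form, at a point $y\in Y$ in a neighborhood trivializing $L$, the $n\times n$ Wronskian-like determinants built from $d^{j}\log\sigma_i$ for $i$ in an $n$-subset of $\{1,\dots,c\}$ and $j=0,\dots,n-1$, and pair them with auxiliary sections so as to obtain honest sections of $\mathrm{Sym}^{N}\Omega_Y(\log D)\otimes L^{-m'}$ for some controllable $N$ and $m'>0$. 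The numerical condition $m\geq (4n)^{n+2}$ is precisely what buys the positivity of the twisted bundle after the construction.

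The main work, and the hard step, is the vanishing analysis: one must show that the common zero locus of these Wronskian sections in $X_1(D)$ is contained in $\Sigma$. On the stratum where exactly the $H_i$'s with $i\in I$ vanish, the logarithmic forms $d\log\sigma_i$ for $i\in I$ are honestly logarithmic while those for $i\notin I$ are regular, and the determinantal condition for the Wronskian to vanish translates, after choosing local coordinates adapted to the stratum, into an algebraic condition on $a_{i,\alpha}$ and on the direction $[\xi]\in\mathbb{P}(\Omega_Y(\log D))$. Using the genericity of the coefficients $a_{i,\alpha}$, a dimension count in the parameter space $\prod|L^m|^c$ shows that the ``bad'' locus, consisting of pairs (configuration, $[\xi]\in X_1(D)\setminus\Sigma$) for which all the Wronskian sections vanish, projects with fibers of too small a dimension to cover the whole parameter space; hence for a general configuration this bad locus in $X_1(D)$ is empty. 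The bound $m\geq (4n)^{n+2}$ comes out of making this dimension estimate, which ultimately rests on Nakamaye-type and effective cohomological counts of the relevant linear systems.

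Once the base locus of the twisted sections is controlled as above, one concludes by the standard characterization of $\mathbf{B}_+$: since for some ample $A$ the global sections of $\mathrm{Sym}^k\Omega_Y(\log D)\otimes \pi^{*}A^{-1}$ separate points outside $\Sigma$, we have $\mathbf{B}_+(\O_{X_1(D)}(1))\subseteq\Sigma$, and the reverse inclusion is automatic from the residue maps mentioned in the introduction, giving almost ampleness. Thus the whole argument reduces to (a) the explicit Wronskian construction and (b) the generic-vanishing/dimension count, with (b) being the principal technical obstacle.
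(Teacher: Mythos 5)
Your high-level plan --- produce twisted symmetric logarithmic differentials whose common zero locus lies inside the residue obstruction set, then use a parameter-space dimension count to pass to general configurations --- does match the paper's strategy in outline, but the concrete construction you propose does not produce the required objects, and the genericity step is missing a key technical device.

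The Wronskian in $d^{j}\log\sigma_i$, $j=0,\dots,n-1$, is a \emph{higher-order} jet differential: it lives on a jet space of order $n$, not on $X_1(D)=\P(\Omega_Y(\log D))$, so there is no way to ``pair it with auxiliary sections'' to land in $H^0(\mathrm{Sym}^N\Omega_Y(\log D)\otimes A^{-1})$. The theorem here is an order-one statement, and the paper's actual replacement for your Wronskian is the logarithmic connection $\nabla_{s_D}(s)=ds-s\,ds_D/s_D$ together with the tautological identity $\nabla_{\sigma_i}(\sigma_i)=0$. Applied to the Fermat-type section $\sigma_i=\sum_I a_{i,I}\tau^{(r+1)I}$ and divided through by $\tau^{rI}$, this identity yields a relation $\sum_I\nabla_{i,I}(a_{i,I})\,\tau^{rI}=0$ among twisted logarithmic $1$-forms $\nabla_{i,I}(a_{i,I})$, and that relation is precisely what defines a rational map from $X_1(D)$ to the universal complete intersection $\ys\subset|\O_{\P^n}(\delta_1)|\times\cdots\times|\O_{\P^n}(\delta_n)|\times\P^n$. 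The needed twisted sections of $\O_{X_1(D)}(1)$ are then pulled back from $\ys$, and their base locus is controlled there by Nakamaye's theorem (in the effective form of Deng) applied to $q_1^*\O_{\G}(b_1,\dots,b_n)\otimes q_2^*\O_{\P^n}(-1)|_{\ys}$; no ad hoc Wronskian construction enters at any stage.

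Separately, the implicit appeal to Zariski openness to pass from a special configuration to a general one fails on $X_1(D)$ itself, because $\O_{X_1(D)}(1)$ is never ample: ``$\mathbf{B}_+\subset\Sigma$'' is not the degeneration of an ample property on $X_1(D)$, so it is not obviously open in the parameters. The paper circumvents this by constructing a birational modification $\widehat{X}_1(D)\to X_1(D)$ that resolves the ideal sheaves $\js_I$ of the residue strata, and by replacing almost-ampleness with the genuinely Zariski-open property $(\ast)$: ampleness of $\mu^*\O_{X_1(D)}(1)\otimes\O(-F)$ for some exceptional $\qb$-divisor $F$. Without the explicit resolution algorithm (the ``compatible systems'' machinery) and its compatibility with families and with restriction to subvarieties, there is no way to conclude for general $H_1,\dots,H_c$ from a single well-chosen configuration. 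You also omit the reduction from $c\geq n$ to the critical case $c=n$, and the obstruction locus should be $\bigcup_{\#I<n}\tilde D_I$ with $\tilde D_I\cong D_I\times\P^{\#I-1}$ (the projectivization of the trivial residue quotient $\Omega_Y(\log D)|_{D_I}\twoheadrightarrow\O_{D_I}^{\oplus\#I}$), not the images of $\P(T_{H_I})$.
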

In this statement, the case \(c=n\) is the critical case.  In  \cref{apprendix a} we shall see that for \(Y=\P^n\), and for \(c<n\), the logarithmic cotangent bundle can not be almost ample as it is not even big. Therefore, in this generality, our result is optimal on \(c\). Our main result  is a special case of \cref{main} (see \cref{se:Main}), in which we  also consider   cases where the degrees of the different hypersurfaces are different. 

The only  result of this type  we are aware of is due to Noguchi \cite{Nog86}, who proved that for pair \((\P^2,D)\), where \(D\) is the sum of \(6\) lines in general position, the logarithmic cotangent bundle satisfies a similar positivity property. \\

Let us mention  that \Cref{cor:simple} has a straightforward implication regarding the hyperbolicity of \(Y\setminus D\). In fact it implies that \(Y\setminus D\) does not contain any entire curve, where we recall that an entire curve in \(Y\setminus D\) is a non constant holomorphic map \(f:\cb\to Y\setminus D\). Moreover, one can actually prove that \(Y\setminus D\) is hyperbolically embedded in \(Y\) (see \cite{Kob98} for the definition). These results are not new and follow from the work \cite{Dar15} or \cite{Siu15}, but while these works rely on higher order jet space technics, here we just rely on jets of order one. Let us  put this in perspective with previously known results and conjectures. 

A conjecture of Kobayashi states that \emph{the complement of a general hypersurface of high degree in \(\P^n\) is Kobayashi hyperbolic}.  In view of a  result of Green \cite{Gre77} and the compact counterpart of the Kobayashi conjecture, this would imply that the the complement of a general hypersurface is in fact hyperbolically embedded in \(\P^n\). This conjecture was only proved recently by Siu \cite{Siu15}. Before that, the effective algebraic degeneracy of entire curve in these complements was proved by Darondeau \cite{Dar15} building on ideas of Voisin \cite{Voi96,Voi98}, Siu \cite{Siu04}, Diverio, Merker and Rousseau \cite{DMR10}. 

On the other hand,  the logarithmic  Green-Griffiths conjecture stipulates that if \((X,D)\) is a pair of log general type (i.e. \(K_X(D)\) is big), then every entire curve in \(X\setminus D\) is algebraically degenerate (one even expects there to be an algebraic subset \(Z\subsetneq X\) containing the image of all entire curves). One of the main general results towards this conjecture was obtained by the work of Noguchi, Winkelmann and Yamanoi  \cite{NWY07,NWY08,NWY13} and Lu and Winkelmann \cite{L-W12} who  proved that if \((X,D)\) is a pair of log general type with logarithmic irregularity \(h^0(X,\Omega_X(\log D))\geqslant \dim X\), then every entire curve in \(X\setminus D\) is algebraically degenerate. This improves and generalizes a classical result of Bloch and Ochiai. 
When \(X=\P^n\) and if \(D\) has \(n\) components, then, as we recall in \cref{apprendix a}, the logarithmic irregularity of the pair \((\P^n,D)\) is \(n-1\), and therefore the situation of \cref{cor:simple}  lies just outside the cases covered by the work of these authors.\\

The first step when one tries to control the augmented base locus of the tautological line bundle \(\O_{X_1(D)}(1)\) is to prove that this line bundle is big and therefore to construct some logarithmic symmetric differential forms on the pair \((X,D)\).
 Under some circumstances, as in \cite{Rou03} one can use a Riemann-Roch argument. A more differential geometric approach, motivated by the study of hyperbolicity properties of moduli spaces, is sometimes possible. It relies on the curvature properties of some  metric on \(X\setminus D\). This approach has proved very fruitful when for instance \(X\setminus D\) carries a variation of Hodge structure or is the base of a suitable  family of smooth varieties \cite{Zuo00,V-Z02,BPW17,Bru16,B-C17,Cad16}, but we don't know if the pairs considered in \cref{cor:simple} have such a property. 

In the present article, in order to construct logarithmic symmetric differential forms on the pairs under consideration, we follow  the general strategy of \cite{BD15,Bro17}. In fact, our main result can be seen as a logarithmic analogue of a  result of Darondeau and the first named author \cite{BD15}  on a conjecture of Debarre, taking into account the effective results of the second named author \cite{Den16,Den17}. In \cite{Deb05}, Debarre conjectured that \emph{if \(X\subset \P^n\) is  a general complete intersection of sufficiently high multidegree and such that \(\codim_{\P^n}(X)\geqslant \dim X\), then the cotangent bundle \(\Omega_X\) is ample}. This conjecture was established by Xie \cite{Xie15} with an effective uniform lower bound on the degree and independently, in the work \cite{BD15} with no effective bound and moreover an arithmetic condition on the multidegree, similar to the one appearing in \cref{main}. In \cite{Den17}, among other results, the second named was able to make the argument of \cite{BD15} effective, and improved the bounds obtained in \cite{Xie15}.\\

Let us now outline the proof  of \cref{cor:simple}.
First we reduce ourselves to the case \(c=n\), then, the main idea of the proof is to deduce the result for general pairs from the construction of a particular example \((Y,D)\)  satisfying a stronger Zariski open property. Because the logarithmic cotangent bundle can not be ample, we can not use the Zariski openness of ampleness in such a direct way. Instead, one has to take into account the obstructions from the residues  by considering a ``universal'' modification \(\widehat{\P}(\Omega_Y(\log D))\) of \({\P}(\Omega_Y(\log D))\) and prove a suitable ampleness statement on this modification. 

Such examples are constructed by considering hypersurfaces \(H_1,\dots,H_n\) being defined as suitable deformations of Fermat type hypersurfaces. For simplicity, until the end of this introduction, let us restrict ourselves to the case \(Y=\P^n\) and \(L=\O_{\P^n}(1)\).  For \(i\in \{1,\dots,n\}\), we consider  the hypersurface \(H_i\subset \P^n\) defined by a polynomial of the form 
 \[F_i=\sum_{|I|=\delta}a_{i,I}y^{(r+1)I},\]
where \(\delta\in \nb^*\), \(r\in \nb\) is large enough,  the \(a_{i,I}\)'s are   polynomials of degree \(\ep\geqslant 1\) in \(\cb[y_0,\dots,y_n]\), and where we used the multi-index notation \(y^{(r+1)I}=(y_0^{i_0}\cdots y_n^{i_n})^{r+1}\) for \(I=(i_0,\dots,i _n)\) and homogeneous coordinates \([y_0,\dots, y_n]\) on \(\P^n\). Set \(m:=\ep+(r+1)\delta\). For any \(i\in \{1,\dots, n\}\) we obtain, as explained in \cref{meromorphic connection}, a logarithmic connection \(\nabla_{F_i}:\O_{\P^n}(m)\to \O_{\P^n}(m)\otimes \Omega_{\P^n}(\log H_i)\) defined by 
\[\nabla_{F_i}(s)\stackrel{\rm loc}{:=}ds-s\frac{d F_i}{F_i}.\]
This connection satisfies moreover trivially the  relation  \(\nabla_{F_i}(F_i)\equiv 0\).
A straightforward computation therefore implies that
\[0=\nabla_{F_i}(F_i)=\sum_{|I|=\delta_i}\alpha_{i,I}y^{rI}=\sum_{|I|=\delta_i}\alpha_{i,I}x^{I}\]
where for each \(I\), \(\alpha_{i,I}\in H^0(\P^n,\Omega_{\P^n}(\log H_i)\otimes \O_{\P^n}(\ep+\delta))\) and where \((x_0,\dots, x_n)=(y_0^r,\dots,y_n^r)\). From this observation, one can see that this collection of connexions induces a rational map \[\P(\Omega_{\P^n}(\log D))\dashrightarrow \ys\]
where \(\ys\) is the universal family  of complete intersections of codimension \(n\) and multidegree \((\delta,\dots, \delta)\):
\[\ys:=\{(P_1,\dots,P_n,[x])\in |\O_{\P^n}(\delta)|^{\times n}\times \P^n\ | \ P_1(x)=\cdots=P_n(x)=0\}.\]
If one denotes  \(\O(b_1,\dots, b_n):=\O_{|\O_{\P^n}(\delta)|}(b_1)\times \cdots\times\O_{|\O_{\P^n}(\delta)|}(b_n) \) for any integers \(b_1,\dots, b_n\), then, as we shall see, at least for \(r\) large enough, every element in 
\[H^0\big(\ys,\O(b_1,\dots, b_n)\boxtimes \O_{\P^n}(-1)|_{\ys}\big)\]
induces a symmetric logarithmic differential form of \((\P^n,D)\) vanishing along some ample divisor.  The map \(\ys\to |\O_{\P^n}(\delta)|^{\times n}\) being generically finite, if the \(b_i\)'s are positive, the pull back of \(\O(b_1,\dots, b_n)\) to \(\ys\) is big and nef. Therefore, at least when the \(b_i\)'s are large enough, there are many  global sections of \(\O(b_1,\dots, b_n)\boxtimes \O_{\P^n}(-1)|_{\ys}\).   Moreover, the base locus in \(\ys\) of these global section can be understood geometrical from a result of Nakamaye \cite{Nak00} and one can control the dimension of this base locus  in view of  a   work of Benoist \cite{Ben11}. 
One can thus  hope to use this information to  understand the augmented base locus of the tautological line bundle on \(\P(\Omega_{\P^n}(\log D))\).

Unfortunately, there are several technical difficulties which make the proof rather involved. First of all, as mentioned above, we first have to take a suitable modification of \(\P(\Omega_{\P^n}(\log D))\), this relies on the most technical part of our argument, namely an explicit resolution of the obstructions induced by the residues.  Even so, it is rather delicate to understand the map from \(\P(\Omega_{\P^n}(\log D))\) to \(\ys\), mainly because the dependence of the \(\alpha_{i,I}\)'s on the \(a_{i,I}\)'s is non-linear. Therefore we had to modify the above argument by constructing an embedding of the pair \((\P^n,D)\stackrel{\sim}{\to}(Z,D'|_{Z})\subset (X,D')\) where now \(Z\) depends on the \(a_{i,I}\)'s, but the pair \((X,D')\) does not.  Moreover, because of our particular shape of equation, there are also  complications occurring along the coordinate hyperplanes. These seem to be unavoidable with this approach, as in \cite{BD15,Bro16,Bro17,Den17,Xie15}.  

Lastly, let us mention that, although in the case \((\pb^n,\sum_{i=1}^{c}H_i) \), the condition \(c\geqslant n\) in \cref{cor:simple} is optimal, there exists many examples  with fewer components having almost ample logarithmic cotangent bundles. Such examples as well as generalizations of positivity to higher order jet spaces will be studied in a forthcoming work.\\%\cite{BDe17}.\\

The organization of the paper is as follows. In \cref{se:LogCotangent}, we make some preliminary observation on the geometry of the projectivization of the logarithmic cotangent bundle and study  the ampleness  obstructions  induced by the residue maps. We moreover explain how these obstructions are also indeterminacy loci of natural rational maps between the projectivizations of the logarithmic cotangent bundles of \((X,D)\) and \((X,D')\) where \(D'\) is the sum of only some of the components of \(D\).  In \cref{se:Main} we prove our main result by implementing the strategy explained above. In \cref{apprendix a} we prove that our result is optimal on the number of components by establishing a vanishing theorem for logarithmic symmetric differential form on \((\P^n,D)\) where \(D\) is a divisor with less than \(n\) components. Lastely, in section \cref{se:Resolution}, we describe an explicit resolution algorithm for the  indeterminacies between the different rational maps studied in \cref{se:LogCotangent}.

%\bigskip
%\bigskip
%\textbf{Acknowledgments.}  It is a great honor for us to dedicate this article to Jean-Pierre Demailly. Much of our knowledge on complex geometry comes from his lectures, surveys and articles, and our research was strongly motivated by his.  It is hard to overestimate the influence of his ideas on both of us, especially on the second named author, who used to be his student. We take this opportunity to  express our  gratitude and admiration to him. 

%\bigskip

\section{The geometry of the projectivized logarithmic cotangent bundle}\label{se:LogCotangent}
\subsection{Conventions} 
We summarize here the main conventions we use throughout this text. We work over the field of complex numbers \(\cb\).   Given a complex manifold \(X\) and a line bundle \(L\) on \(X\),  for any global section \(\sigma\) of \(L\),   we denote by \((\sigma=0)\subset X\) the zero locus of \(\sigma\). 
The base locus \({\rm Bs}(L)\) is the intersection of the zero loci of all global sections of \(L\) and \(\mathbf{B}(L):=\bigcap_{m\in \nb} {\rm Bs}(L^m)\).  If \(X\) if projective,  then the augmented base locus of \(L\) is 
\[\mathbf{B}_{+}(L):=\bigcap_{m\in \nb}\mathbf{B}(L^m\otimes A^{-1})\]
for any ample line bundle  \(A\) on \(X\) (see \cite{Laz04II}). Given a vector bundle \(E\) on \(X\) we will denote by \(\P(E)\) the projectivization of rank one quotients of \(E\) and by \(\O_{\P(E)}(1)\) the tautological line bundle on \(\P(E)\). We will often identify \(\P(E)\) with \(P(E^\vee)\) the projectivisation of lines in \(E^\vee\) and thus denote the elements in \(\P(E)\) in the form \([\xi]\) with \(\xi\in E^{\vee}_x\) for some \(x\in X\). Given a divisor \(D\) on \(X\) and a subvariety \(Z\subset X\), we denote by \(D|_Z\) the divisor \(\iota^*D\) where \(\iota:Z\to X\) is the inclusion morphism. Given a morphism \(\rho:\X\to S\) we will denote by \(X_s=\rho^{-1}(\{s\})\) the fiber above  \(s\in S\).

\subsection{Log pairs and logarithmic cotangent bundles}\label{def:log pair}
Let $X$ be a (not necessarily compact) complex manifold of dimension \(n\geqslant 1\). Let $D=\sum_{i=1}^{c}D_i$ be a simple normal crossing divisor on $X$, that is, for any \(x\in X\), there exists an open neighborhood \(U\) of \(x\) with coordinates \((z_1,\dots, z_n)\) centered at \(x\) and an integer \(k\leqslant c\) such that 
\[D\cap U=(z_1\dots z_k=0).\]
%that is, all the components $D_i$ are smooth irreducible divisors that meet
%transversally. 
Such a pair $(X,D)$ will be called a  \emph{smooth pair} or a \emph{log pair}.  One denotes by $T_X(-\log D)$
the logarithmic
tangent bundle of $(X,D)$. Recall that this  is the locally free subsheaf of the holomorphic
tangent bundle $T_X$ of  vector fields tangent to \(D\). Locally on an open subset of $U\subset X$ with local coordinates $(z_1,\ldots,z_n)$ as above, $T_X(-\log D)$ is generated by
$$
\left(z_1\frac{\d}{\d z_1},\ldots,z_k\frac{\d}{\d z_k}, \frac{\d}{\d z_{k+1}}, \ldots, \frac{\d}{\d z_n}\right).
$$ 
Consider the dual of $T_X(-\log D)$, which is the locally free sheaf generated by
$$
\left(\frac{d z_1}{z_1},\ldots,\frac{d z_k}{z_k},  d z_{k+1}, \ldots,  d z_n\right).
$$
It is  denoted by $\Omega_X(\log D)$ and is called the \emph{logarithmic cotangent bundle} of $(X,D)$. Note that $\Omega_X$ is a subsheaf of $\Omega_X(\log D)$.

The projectivization \(\P(\Omega_X(\log D))\) of \(\Omega_X(\log D)\) will be critical in this paper, therefore we introduce the following notation 
\[X_1(D):= \P(\Omega_X(\log D)).\]
We will also denote by \(\pi_{X,D}:X_1(D)\to X\) the canonical projection, but we will mostly write \(\pi_X\) instead of \(\pi_{X,D}\) for readability. This slight abuse of notation should not lead to any confusion.

%The vector bundle \(\Omega_X(\log D)\) and its projectivisation \(\P(\Omega_X(\log D))\) will be our main object of study in the present article. Since in our argument we will work with several different pairs, we introduce here some abbreviated notation: Given any smooth pair \((X,D)\) we will write 
%\[\overline{X}_1(D):=\P(\Omega_X(\log D))\stackrel{p_{X,D}}{\to} X.\]

Given another log pair $(Y,E)$, and a morphism \(f:X\to Y\), we say that \(f\) is a morphism of log pairs if  $f^{-1}(E)\subset D$, and in this case, we write $f:(X,D)\rightarrow (Y,E)$. The differential of such a morphism induces a morphism
\[df:TX(-\log D)\to f^*T_Y(-\log E),\]
or dually, a morphism 
\[{}^tdf:f^*\Omega_Y(\log E)\to \Omega_X(\log D).\]
Therefore, one obtains a rational map 
\[[df]:X_1(D)\dashrightarrow Y_1(E).\]
%In this case, $f^*\phi\in H^0\big(X,\Omega_X(\log D)\big)$ for any $\phi\in H^0\big(Y,\Omega_Y(\log E)\big)$, and dually one has
%$$
%f_*:T_X(-\log D)\rightarrow f^*\big(T_Y(-\log E)\big).
%$$
%$f$ also induces a meromorphic map $\tilde{f}$ between the projectivized bundles
%$$
%\tilde{f}:\pb\big(\Omega_X(\log D)\big)\dashrightarrow \pb\big(\Omega_Y(\log E)\big).
%$$
A particular instance of a morphism of log pairs is given when one considers 
a smooth submanifold $Z$ of $X$. If $Z$ intersects  $D$ transversely (i.e. \(D|_Z\) is simple normal crossing), then  $(Z,D|_Z)$ is also a log manifold and  the inclusion morphism $i_Z:(Z,D|_Z)\rightarrow (X,D)$ is a   morphism of log pairs. Moreover, the induced meromorphic map \([d{i}_Z]: Z_1(D|_Z)\rightarrow X_1(D)\) is holomorphic.

\subsection{Obstruction to ampleness}\label{sse:obstruction} Take \((X,D)\) as above. For any \(i\in \{1,\dots,c\}\), there exists a residue map
\[\Res_{D_i}:\Omega_X(\log D)\to \O_{D_i}\]
defined, over any open subset \(U\subset X\), by 
\[\Res_{D_i}\left(\alpha+\sum_{k=1}^c\beta_k\frac{d\sigma_k}{\sigma_k}\right)=\beta_i|_{D_i},\] for any \(\alpha\in \Gamma(U,\Omega_X)\) any \(\beta_1,\dots, \beta_c\in \O_X(U)\) and where \(D_k=(\sigma_k=0)\) for any \(k\in \{1,\dots,c\}\). This map is easily seen to be a well defined morphism of \(\O_X\)-modules.

For any non-empty $I=\{i_1,\ldots,i_r\}\subset \{1,\ldots,c\}$, set $I^{\complement}:=\{1,\ldots,c\}\setminus I$,    define $D_I:=D_{i_1}\cap\cdots\cap D_{i_r} $ which is a smooth complete intersection of dimension \(n-r\) since the divisor \(D\) is simple normal crossing, and \(D(I^{\complement})=\sum_{i\in I^{\complement}}D_i\) which is a simple normal crossing divisor. We obtain the following short exact sequence of sheaves
\begin{equation}\label{eq:Residues}
0\rightarrow \Omega_X\big(\log D(I^\complement)\big)\rightarrow \Omega_X(\log D)\xrightarrow{{\rm Res}} \oplus_{i\in I}\oc_{D_i}\rightarrow 0,
\end{equation}
which induces a quotient of vector bundles over the (not necessarily  connected) smooth   submanifold $D_{I}\subset X$ of codimension $r$  in \(X\):
%\begin{equation}\label{eq:QuotientObstruction}
%\Omega_X(\log D)\mid_{D_{I}}\rightarrow \underbrace{\oc_{D_{I}}\oplus\cdots\oplus \oc_{D_{I}}}_{r\ {\rm times}}\rightarrow 0.
%\end{equation}
\begin{equation}\label{eq:QuotientObstruction}
\Omega_X(\log D)\mid_{D_{I}}\rightarrow \oc_{D_{I}}^{\oplus r}\rightarrow 0.
\end{equation}
Since this is a morphism of \(\O_{D_I}\)-modules, it induces an inclusion   
\[\tilde{D}_I:=\pb(\oc_{D_I}^{\oplus r})\cong D_{I}\times \pb^{r-1}\subset \pb\big(\Omega_X(\log D)\big).\] 
%such that for the fibration $\pi:\pb\big(\Omega_X(\log D)\big) \rightarrow X$, the restriction $\pi\mid_{\widetilde{D}_{I}}:\widetilde{D}_{I}\rightarrow D_{I}$ is isomorphic to a trivial product $D_{I}\times \pb^{r-1}\rightarrow D_{I}$. 
 Observe that \(\dim \tilde{D}_I=\dim D_I+\# I-1=n-1\) for any \(I\subset \{1,\dots,c\}\).
From \eqref{eq:QuotientObstruction}, we see that when \(X\) is projective of dimension \(n\geqslant 2\) and   the number of components of \(D\) is positive (i.e. \(D\neq \varnothing\)),  the logarithmic cotangent bundle \(\Omega_X(\log D)\) cannot be ample. Indeed,  for any non-empty \(I\subset \{1,\dots, c\}\) with \(\#I<n\), the  restriction of \(\Omega_X(\log D)\) to \(D_I\) has a trivial quotient, which, since \(\dim D_I=n-\#I>0\) prevents the logarithmic cotangent bundle from being ample. Said differently, it implies that \(\tilde{D}_I\subset \mathbf{B}_+\big(\O_{X_1(D)}(1)\big)\) for any such \(I\).  Observe that when \(\# I>n\) then \(D_I\) is empty, and that when \(\#I=n\) then \(D_I\) is a finite set of point, therefore, in this last case it does not directly follow that \(\tilde{D}_I\subset \mathbf{B}_+(\O_{X_1(D)}(1))\).
This leads us to introduce the following definition.
\begin{dfn}\label{almost ample}
	 Let $D=\sum_{i=1}^{c}D_i$ be a simple normal crossing divisor on a projective manifold $X$ of dimension \(n\). We say that the log pair $(X,D)$ has \emph{almost ample logarithmic cotangent bundle} if the tautological line bundle $\oc_{X_1(D)}(1)$ is big and  its augmented base locus satisfies
	$$
	\mathbf{B}_+\big(\oc_{X_1(D)}(1)\big)= \bigcup_{\substack{\emptyset\subsetneq I\subset \{1,\ldots,c\}\\
	\#I< n}}\tilde{D}_I.
	$$ 
\end{dfn}

The situation can be understood in local coordinates as follows. Fix a point \(x\in X\). Without loss of generality, we can suppose that there exists \(k\leqslant c\) such that \(\{i\in \{1,\dots,c\}\ |\ x\in D_i\}=\{1,\dots,k\}\). One  can therefore take  an open neighborhood $U$ of $x$ with local coordinates $(z_1,\ldots,z_n)$ such that     $D_i\cap U=(z_i=0)$ for any $i\in\{1,\ldots,k\}$. In this setting, one has a local trivialization
\begin{eqnarray*}
 U\times \pb^{n-1}&\stackrel{\sim}{\longrightarrow}&{\pi}_{X}^{-1}(U)\\
(z,[\xi_1,\ldots,\xi_n])&\mapsto&	\left(z,\Big[\sum_{i=1}^k\xi_iz_i\frac{\d}{\d z_i}+\sum_{i=k+1}^n\xi_i\frac{\d}{\d z_i}\Big]\right).
\end{eqnarray*} 

Take \(I\subset\{1,\dots,c\}\). Observe that \(D_I\cap U=\varnothing\) if \(I\not\subset \{1,\dots,k\}\). If on the other hand \(I=\{i_1,\dots,i_r\}\subset \{1,\dots,k\}\), then from the 
definition of the residue map, we see that in these local coordinates, the restricted residue map \(\Res:\Omega_X(\log D)|_{D_I\cap U}\to \O_{D_I\cap U}^{\oplus r}\) is given by
\[\Res\left(\sum_{i=1}^k\eta_{i}\frac{dz_i}{z_i}+\sum_{i=k+1}^n\eta_idz_i\right) = (\eta_{i_1},\dots, \eta_{i_r}).\]
Therefore, it follows that under the above trivialization,  \(\tilde{D}_I\) is given by
\begin{eqnarray}\label{local}
\tilde{D}_{I}\cap{\pi}_{X}^{-1}(U)=\big\{(z,[\xi_1,\ldots,\xi_n])\in U\times \P^{n-1} \ |\   z_i=0\ \forall i\in   I\ \ \text{and}\ \   \xi_j=0  \ \forall j\in\{1,\dots,n\}\setminus  I\big\}.
\end{eqnarray}
For later use, let us observe that this implies that for any $\varnothing\neq I,J\subset \{1,\ldots,c\}$, since \((I\cap J)^\complement=I^\complement \cup J^\complement\),
\begin{eqnarray}\label{intersection}
\begin{cases}
 \tilde{D}_I\cap \tilde{D}_J\subset   \tilde{D}_{I\cap J}   & \quad \text{if } I\cap J\neq\varnothing\\
\tilde{D}_I\cap \tilde{D}_J=\varnothing  & \quad \text{if } I\cap J=\varnothing.
\end{cases}
\end{eqnarray}
%The \(D_I\)'s are not only  obstructions to ampleness but also the indeterminacy locus of some natural rational maps. From \eqref{eq:Residues} we obtain, for any \(I\subset \{1,\dots,c\}\) 
%whose indeterminacies are precisely the reunion of all \(\tilde{D}_J\) such that \(J\subset I\), as we shall see shortly.
\begin{dfn}\label{obstruction} For a non-empty subset \(I\subset \{1,\dots, c\}\), we define 
	 $\jc_I$ to be the ideal sheaf of $\tilde{D}_I$ in $X_1(D)$ and  $\js_I:=\bigcap_{{J\subset I }}\jc_J$. 
	  We also write   $\js_D:= \bigcap_{{\varnothing\neq J }}\jc_J$ and $\js_i:=\js_{\{i\}^\complement}$ for brevity.
\end{dfn} 
For any \(I\subset \{1,\dots, c\}\), the map 
\[g_I:\Omega_X\big(\log D(I^\complement)\big)\to \Omega_X(\log D)\]
induces a rational map
\begin{equation}\label{rational}
\gamma_I:X_1(D)\dashrightarrow X_1\big( D(I^\complement)\big).\end{equation}
%This map is defined as follows: for any \(x\in X\) and any non-zero rank one quotient \(q:\Omega_X(\log D)_x\to \cb \), if \(q\circ g_I\neq 0\), then \(\gamma_I([q]):=[q\circ g_I]\). 
If one denotes by \(U(\gamma_I)\subset X_1(D)\) the complement of the indeterminacy locus of \(\gamma_I\), then one has an isomorphism 
\begin{equation}\label{eq:PullBackIso}
 \O_{X_1(D)}(1)|_{U(\gamma_I)}\stackrel{\sim}{\to}\gamma_I^*(\O_{X_1(D(I^\complement))}(1)).
\end{equation}
%This isomorphism is easily described. If \(x\in X\) and \(q:\Omega_X(\log D)_x\to \cb\) is a rank one quotient such that \(q\circ g_I\neq 0\), and therefore \([q]\in U(\gamma_I)\), then the map in \eqref{eq:PullBackIso} sends an element \(\lambda q\in  \O_{X_1(D)}(1)|_{[q]}\) to \(\lambda q\circ g_I\in \O_{X_1(D(I^\complement))}(1)_{[q\circ g_I]} =(\gamma_I^*\O_{X_1(D(I^\complement))}(1))_{[q]}\).

On the other hand, for any line bundle \(L\) on \(X\), one has isomorphisms 
\begin{eqnarray*}
H^0(X,\Omega_X(\log D)\otimes L)&\cong& H^0(X_1(D),\O_{X_1(D)}(1)\otimes \pi_X^*L)\\
H^0\big(X,\Omega_X\big(\log D(I^\complement)\big)\otimes L\big)&\cong& H^0\big(X_1\big(D(I^{\complement})\big),\O_{X_1(D(I^{\complement}))}(1)\otimes \pi_X^*L\big).
\end{eqnarray*}
as we also have a  map \(H^0\big(X,\Omega_X\big(\log D(I^\complement)\big)\otimes L\big)\to H^0(X,\Omega_X(\log D)\otimes L)\) induced by \(g_I\), we obtain a map
\[\gamma_I^*:H^0\big(X_1\big(D(I^{\complement})\big),\O_{X_1(D(I^{\complement}))}(1)\otimes \pi_X^*L\big)\to H^0(X_1(D),\O_{X_1(D)}(1)\otimes \pi_X^*L).\]
The choice of notation is legitimate because on \(U(\gamma_I)\) it coincides with the map induced by \eqref{eq:PullBackIso}.
\begin{proposition}\label{resolve} With the same notation as above, take \(I\subset \{1,\dots,c\}\).
\begin{thmlist}
\item \label{equivalent simultaneous}Let  $\tilde{\mu}:\widetilde{X}_1\rightarrow X_1(D)$ be a log resolution of the ideal sheaves $\js_I$. Then $\tilde{\mu}$ resolves the indeterminacy of the meromorphic map \(\gamma_I\).
\item Given any line bundle \(L\) on \(X\), and any element \(\sigma\in H^0\big(X_1\big(D(I^{\complement})\big),\O_{X_1(D(I^{\complement}))}(1)\otimes \pi_X^*L\big)\), the section \(\gamma_I^*\sigma\) vanishes along \(\js_I\). Equivalently, \(\gamma_I^*\) factors through
\[H^0\big(X_1\big(D(I^{\complement})\big),\O_{X_1(D(I^{\complement}))}(1)\otimes \pi_X^*L\big)\to H^0(X_1(D),\O_{X_1(D)}(1)\otimes \pi_X^*L\otimes \js_I).\]
\item If \(W\subset H^0\big(X_1\big(D(I^{\complement})\big),\O_{X_1(D(I^{\complement}))}(1)\otimes \pi_X^*L\big)\) is a base point free linear system, then the ideal sheaf defined by  \(\gamma_I^*W\) is precisely \(\js_I\).
\end{thmlist}

\end{proposition}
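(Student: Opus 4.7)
The plan is to identify, via a local computation, the scheme-theoretic base ideal of the rational map $\gamma_I$ with $\js_I$. Once this key identity is in place, the three statements all follow formally. Concretely, $\gamma_I$ is the rational map associated to the composition
\[
\rho_I : \pi_X^*\Omega_X\bigl(\log D(I^\complement)\bigr) \xrightarrow{\pi_X^* g_I} \pi_X^*\Omega_X(\log D) \twoheadrightarrow \oc_{X_1(D)}(1),
\]
and there is a unique ideal sheaf $\ic \subset \oc_{X_1(D)}$ with $\mathrm{im}\,\rho_I = \ic\cdot\oc_{X_1(D)}(1)$. The main task is therefore to show $\ic=\js_I$.

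To verify this, I would work in the local trivializations of Subsection 2.2. Pick coordinates $(z_1,\dots,z_n)$ near $x\in X$ with $D_i|_U=(z_i=0)$ for $i\leq k$, and after relabeling assume $I\cap\{1,\dots,k\}=\{1,\dots,r\}$. The standard generator of $\Omega_X(\log D(I^\complement))$ along the $i$-th direction is $dz_i$ for $i\leq r$ and $dz_i/z_i$ otherwise, and $g_I$ acts by the identity on basis elements except that $dz_i=z_i\cdot(dz_i/z_i)$ for $i\leq r$. Reading off the fiber coordinates $[\xi_1:\cdots:\xi_n]$, one obtains
\[
\ic|_{\pi_X^{-1}(U)} \;=\; (z_1\xi_1,\dots,z_r\xi_r,\xi_{r+1},\dots,\xi_n).
\]
This ideal is radical, with primary decomposition given by the primes $(z_i)_{i\in J}+(\xi_j)_{j\notin J}$ indexed by nonempty $J\subset\{1,\dots,r\}$. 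By \eqref{local} each such prime is precisely $\jc_J|_U$, while $\jc_J|_U=\oc_U$ whenever $J\not\subset\{1,\dots,k\}$. Hence $\ic|_U = \bigcap_{\emptyset\neq J\subset I}\jc_J|_U = \js_I|_U$, and the identity $\ic=\js_I$ follows.

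With this identity in hand, (i) is immediate: if $\tilde\mu:\widetilde X_1\to X_1(D)$ is a log resolution of $\js_I=\ic$, then $\tilde\mu^{-1}\ic\cdot\oc_{\widetilde X_1}$ is invertible, so $\tilde\mu^*\rho_I$ factors through a surjection onto a line bundle, which by the universal property of $X_1\bigl(D(I^\complement)\bigr)=\pb\bigl(\Omega_X(\log D(I^\complement))\bigr)$ yields a morphism $\widetilde X_1\to X_1(D(I^\complement))$ extending $\gamma_I\circ\tilde\mu$. For (ii), a section $\sigma\in H^0\bigl(X_1(D(I^\complement)),\oc(1)\otimes\pi_X^*L\bigr)$ corresponds via pushforward to a morphism $\oc_X\to\Omega_X(\log D(I^\complement))\otimes L$, and composing with $g_I\otimes\mathrm{id}_L$ and the tautological quotient exhibits $\gamma_I^*\sigma$ as a section of $\mathrm{im}\,\rho_I\otimes\pi_X^*L=\js_I\cdot\oc(1)\otimes\pi_X^*L$. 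For (iii), the base-point-free hypothesis is equivalent to the surjectivity of $W\otimes\oc_X\twoheadrightarrow\Omega_X(\log D(I^\complement))\otimes L$ on $X$; combined with $\mathrm{im}\,\rho_I=\js_I\cdot\oc(1)$ this shows that the image of $\gamma_I^*W$ in $\oc(1)\otimes\pi_X^*L$ generates exactly $\js_I\cdot\oc(1)\otimes\pi_X^*L$, so its base ideal equals $\js_I$.

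The main obstacle is the local identity $\ic=\js_I$: both the computation of the image of $\rho_I$ and the verification that the resulting monomial ideal is radical with exactly the components $\jc_J$ require careful bookkeeping in the trivialization. Everything else then reduces to the tautological exact sequences on the two projective bundles together with \eqref{local}.
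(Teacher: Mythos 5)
Your proof is correct and follows essentially the same route as the paper: both verify the identity in the local trivialization of Subsection~2.2, where the image of \(g_I\) is spanned by \(z_1\xi_1,\dots,z_r\xi_r,\xi_{r+1},\dots,\xi_n\), and then read off (i)--(iii) from this local expression. The only difference is cosmetic — you package the local computation as the identity \(\ic=\js_I\) between the base ideal of \(\rho_I\) and \(\js_I\) and derive all three claims formally from it, whereas the paper writes down \(f_U\) and verifies each claim directly; note also that your primary decomposition statement, like the paper's forward reference to Section~5, should be read at the level of ideal sheaves on \(U\times\pb^{n-1}\), where the component corresponding to the irrelevant ideal \((\xi_1,\dots,\xi_n)\) contributes nothing.
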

\begin{proof}
	Without loss of generality, one can assume that $I=\{1,\ldots,r\}$. 
	%For any $w\in \pb\big(\Omega_X(\log D)\big)$, when $\pi(w)\notin \bigcup_{i=1}^{r} D_{i}$, $\psi_I$ is regular at $w$. Thus the indeterminacy of $\psi_I$ is contained in $\pi^{-1}(\bigcup_{i=1}^{r} D_{i})$. 
	Take  $x\in X$. One can assume that there exists an open neighborhood $U$ of $x$ with local coordinates $(z_1,\ldots,z_n)$, and  integers $m\in \{0,\dots, r\} $, \(q\in \{r,\dots, c\}\),   such that
	 \[U\cap D=U\cap (D_1\cup\cdots \cup D_m\cup D_{q+1}\cup\cdots \cup D_c)\]  
	 and such that  $D_i\cap U=(z_i=0)$ for any $i\in\{1,\dots,m,q+1,\dots,c\}$, here we use the convention \(D_1\cup\cdots \cup D_m=\varnothing\) if \(m=0\). In this setting, both $\pb\big(\Omega_X(\log D)|_U\big)$ and $\pb\big(\Omega_X\big(\log D(I^\complement)\big)|_U\big)$ are isomorphic to the trivial product $U\times \pb^{n-1}$ in such a way that one has the following commutative diagram
	\[
	\xymatrix{
	\pb\big(\Omega_X(\log D)|_U\big)\ar[d]_{\cong}\ar@{-->}[r]^-{\gamma_I} & \pb\big(\Omega_X\big(\log D(I^\complement)\big)|_U\big)\ar[d]_{\cong}\\
	U\times \pb^{n-1}\ar@{-->}[r]^{f_U}& U\times \pb^{n-1}
}
	\]
	where \[f_U(z,[\xi_1,\ldots,\xi_n])= (z,[z_1\xi_1,\ldots,z_m\xi_m,\xi_{m+1},\ldots,\xi_n]).\] 
	On the other hand, for any \(J=\{j_1,\dots, j_s\}\subset \{1,\dots, r\}=I\), writing \(\{k_{s+1},\dots,k_n\}=\{1,\dots, n\}\setminus J\) one has 
	\[\jc_J\big(\pi_X^{-1}(U)\big)=(z_{j_1},\dots, z_{j_s},\xi_{k_{s+1}},\dots, \xi_{k_n}).\]
	But then, as we shall see in \Cref{sec:simple sheaves}, it follows  that
\begin{eqnarray}\label{eq:lift ideal}
	\js_I\big(\pi_X^{-1}(U)\big)=(z_1\xi_1,\dots,z_m\xi_m,\xi_{m+1},\dots, \xi_n).
\end{eqnarray}
From the expression of \(f_U\) it is clear that any resolution of \(\js_I\) resolves the indeterminacies of \(f_U\), and therefore also of \(\gamma_I\). This proves the first point. 
	
The verification of the second point can be done locally. Take a line bundle \(L\) and a global section \(\sigma\in H^0\big(X_1\big(D(I^{\complement})\big),\O_{X_1(D(I^{\complement}))}(1)\otimes \pi_X^*L\big)\). We can suppose that \(L|_U\) is trivialized. Above the open subset \(U\), the restriction \(\sigma_U:=\sigma|_{\pi_X^{-1}(U)}\) is of the form
\[\sigma_U=\sum_{i=1}^ns_i\xi_i,\]
	for some \(s_i\in \O ( U )\). Then in our choice of coordinates \[(\gamma_i^*\sigma)|_{\pi_X^{-1}(U)}=\sum_{i=1}^ms_iz_i\xi_i+\sum_{i=m+1}^ns_i\xi_i.\]
	In view of the above description for \(\js_I\), it is immediate that \(\gamma_i^*\sigma|_{\pi_X^{-1}(U)}\) vanishes along \(\js_I(\pi_X^{-1}(U))\), and since this holds for any \(x\in X\) and any small enough neighborhood \(U\) of \(x\), the second assertion follows at once. The third assertion is also an immediate consequence of the local  expression for \((\gamma_i^*\sigma)|_{\pi_X^{-1}(U)}\).
%	Set ${\rm pr}_2:U\times \pb^{n-1}\rightarrow \pb^{n-1}$. Observe that $\mu$ resolves the indeterminacy of $\psi_I$ if and only if $\mu$ resolves the indeterminacy of ${\rm pr}_2\circ f_U:U\times \pb^{n-1}\rightarrow \pb^{n-1}$. Thus if $\mu$ resolved the base ideal $\jc_U$ of the linear series generated by sections
%\begin{eqnarray}\label{linear series}
%	\{z_1\xi_1,z_2\xi_2,\ldots,z_m\xi_m,\xi_{m+1},\ldots,\xi_n \}\subset H^0\big(U\times \pb^{n-1},{\rm pr}_2^*\oc_{\pb^{n-1}}(1)\big),
%\end{eqnarray}
%	$\mu$ resolved the indeterminacy of $\psi_I$.
%	
%	For any $J\subset \{1,\ldots,m\}$, let us define $\pb_J:=\{[\xi]\mid \xi_j=0 \mbox{ if } j\notin J \}$ and $U_J:=\{z\in U| z_j=0 \mbox{ if } j\in J\}$. Then under the above trivialization, $\tilde{D}_J\bigcap \pi^{-1}(U)=U_J\times \pb_J$. It follows from \eqref{linear series} that $\jc_U$ is the ideal sheaf of the variety $\big(\bigcup_{J\subset \{1,\ldots,m\}}\tilde{D}_J\big)\bigcap \pi^{-1}(U)=\big(\bigcup_{J\subset I}\tilde{D}_J\big)\bigcap \pi^{-1}(U)$. Thus $\jc_U=\bigcap_{{J\subset I }}\jc_J\mid_{\pi^{-1}(U)}$. The lemma is thus proved.
\end{proof}
Let us  now show that at least under some additional assumption, the  ``almost ampleness'' property is preserved if one add more components. 
\begin{proposition}\label{cor:MoreComponents}
Let  \(X\) be a smooth projective variety of dimension \(n\). Take an integer \(c>n\) and a simple normal crossing divisor \(D=\sum_{i=1}^cD_i\) on \(X\). For any \(j\in \{1,\dots,c\}\) let us define \(D_{\hat{\jmath}}:=D-D_j\). If for any \(j\in \{1,\dots, n+1\}\) the pair \((X,D_{\hat{\jmath}})\) has almost ample logarithmic cotangent bundle, then the pair \((X,D)\) has almost ample logarithmic cotangent bundle.

In particular, if \((X,D_{\hat{\jmath}})\) has almost ample logarithmic cotangent bundle for any \(j\in \{1,\dots,c\}\) then so does the pair \((X,D)\).
\end{proposition}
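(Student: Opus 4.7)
The plan is to exploit the rational maps $\gamma_{\{j\}}:X_1(D)\dashrightarrow X_1(D_{\hat{\jmath}})$ introduced in \eqref{rational}. The inclusion $\bigcup_{\emptyset\ne I,\,\#I<n}\tilde D_I\subset \mathbf{B}_+(\mathcal{O}_{X_1(D)}(1))$ is automatic from the residue discussion in \cref{sse:obstruction}, so it remains to establish bigness of $\mathcal{O}_{X_1(D)}(1)$ and the reverse inclusion. Bigness is immediate: fix any $j\in\{1,\dots,n+1\}$; by hypothesis $\mathcal{O}_{X_1(D_{\hat{\jmath}})}(1)$ is big, and the pullback
\[
\gamma_{\{j\}}^{*}:H^0\!\big(X_1(D_{\hat{\jmath}}),\mathcal{O}(m)\big)\longrightarrow H^0\!\big(X_1(D),\mathcal{O}(m)\big),
\]
induced by $\mathrm{Sym}^m$ of the inclusion $\Omega_X(\log D_{\hat{\jmath}})\hookrightarrow \Omega_X(\log D)$, is injective because that inclusion is an isomorphism over the dense open $X\setminus D_j$. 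Comparing dimensions yields the required bigness.

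For the reverse inclusion, let $p\in X_1(D)\setminus \bigcup_{\emptyset\ne I,\,\#I<n}\tilde D_I$ and set $x:=\pi_X(p)$. The combinatorial heart of the argument is pigeonhole: since $D$ is simple normal crossing at most $n$ of its components pass through $x$, so among the $n+1$ indices $j\in\{1,\dots,n+1\}$ there exists one with $x\notin D_j$. For this $j$, the inclusion $\Omega_X(\log D_{\hat{\jmath}})\hookrightarrow \Omega_X(\log D)$ is an isomorphism over $X\setminus D_j$, so $\gamma_{\{j\}}$ restricts to a biholomorphism $\pi_X^{-1}(X\setminus D_j)\xrightarrow{\sim}\pi_{X,D_{\hat{\jmath}}}^{-1}(X\setminus D_j)$, compatible with every residue morphism $\mathrm{Res}_{D_i}$ with $i\ne j$. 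Writing $q:=\gamma_{\{j\}}(p)$, this compatibility identifies $\tilde D_I$ with the analogous locus $\tilde D_I^{\hat{\jmath}}\subset X_1(D_{\hat{\jmath}})$ over $X\setminus D_j$, for every non-empty $I\subset\{1,\dots,c\}\setminus\{j\}$. Since $p$ avoids every $\tilde D_I$ with $\#I<n$, in particular those with $j\notin I$, the point $q$ avoids every $\tilde D_I^{\hat{\jmath}}$ with $\emptyset\ne I\subset\{1,\dots,c\}\setminus\{j\}$ and $\#I<n$; by the almost ampleness hypothesis for $(X,D_{\hat{\jmath}})$, this means $q\notin \mathbf{B}_+(\mathcal{O}_{X_1(D_{\hat{\jmath}})}(1))$.

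It remains to transport this non-vanishing back to $p$. Fix an ample line bundle $A_0$ on $X$ positive enough that $\mathcal{A}:=\mathcal{O}_{X_1(D)}(1)\otimes \pi_X^{*}A_0$ and each $\mathcal{A}_j:=\mathcal{O}_{X_1(D_{\hat{\jmath}})}(1)\otimes \pi_X^{*}A_0$ (for $j=1,\dots,n+1$) are ample. Using the ample-plus-effective characterization of $\mathbf{B}_+$, choose integers $N\ge r\ge 1$ and a section $s'\in H^0\!\big(X_1(D_{\hat{\jmath}}),\mathcal{O}(N-r)\otimes \pi_X^{*}A_0^{-r}\big)=H^0\!\big(X_1(D_{\hat{\jmath}}),\mathcal{O}(N)\otimes \mathcal{A}_j^{-r}\big)$ with $s'(q)\ne 0$. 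Pulling back via $\gamma_{\{j\}}^{*}$ yields $s\in H^0\!\big(X_1(D),\mathcal{O}(N-r)\otimes \pi_X^{*}A_0^{-r}\big)=H^0\!\big(X_1(D),\mathcal{O}(N)\otimes \mathcal{A}^{-r}\big)$, and $s(p)=s'(q)\ne 0$ because $\gamma_{\{j\}}$ is a local isomorphism near $p$. This exhibits $p\notin \mathbf{B}_+(\mathcal{O}_{X_1(D)}(1))$ and completes the first assertion; the final ``in particular'' statement is an immediate specialization. The step requiring most care is the precise verification that $\gamma_{\{j\}}^{*}$ is globally well-defined on symmetric-power sections of $\mathcal{O}(m)\otimes \pi_X^{*}L$ and preserves non-vanishing above $X\setminus D_j$, which follows cleanly from the local description of $\gamma_{\{j\}}$ established in the proof of \cref{resolve}.
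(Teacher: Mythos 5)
Your proof is correct, and the overall strategy—pigeonhole to find an index $j\in\{1,\dots,n+1\}$ with $\pi_X(p)\notin D_j$, then exploit the fact that $\gamma_{\{j\}}$ is a biholomorphism over $X\setminus D_j$ compatible with the residue loci, then transport a Nakamaye-type section via $\gamma_{\{j\}}^*$—is the same as the paper's. The presentation differs: the paper performs the transport globally by blowing up $\tilde{D}_{\{1\}}$ to obtain $\widetilde{X}_1$, invokes the relation $\nu^*\O_{X_1(D')}(1)=\tilde{\mu}^*\O_{X_1(D)}(1)\otimes\O_{\widetilde{X}_1}(-E)$, compares base loci on $\widetilde{X}_1$, pushes back down, and only at the very end intersects with $\pi_X^{-1}(U_1)$. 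You instead argue pointwise, noting that $\gamma_{\{j\}}^*$ is a globally defined $\cb$-linear map on sections because it is $\mathrm{Sym}^{N-r}$ of the sheaf inclusion $\Omega_X(\log D_{\hat{\jmath}})\hookrightarrow\Omega_X(\log D)$ twisted by a line bundle from $X$, and that it agrees near $p$ with the honest pullback by the local biholomorphism, so non-vanishing is preserved. This is a mild but genuine streamlining: one does not need the auxiliary space $\widetilde{X}_1$ nor \eqref{eq:RelationPullBack} at all, only the content of \cref{resolve}.(ii) extended from $\O(1)$ to $\O(m)$, which is immediate from the symmetric power of the inclusion. Two points to make explicit in a written-up version: (a) the choice of $N\geq r\geq 1$ and $s'$ comes from unwinding $q\notin\mathbf{B}_+(\O_{X_1(D_{\hat{\jmath}})}(1))=\bigcap_m\mathbf{B}(\O(m)\otimes\mathcal{A}_j^{-1})$, and this automatically forces $N>r$ (so the twist $\O(N-r)$ is honestly a power of the tautological bundle, not a negative one); and (b) the bigness step is actually superfluous, since showing $\mathbf{B}_+(\O_{X_1(D)}(1))\subsetneq X_1(D)$ already implies bigness—though including it does no harm.
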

\begin{proof}
For any \(j\in \{1,\dots, c\}\), set \(U_j:=X\setminus D_{j}\).  Since \(D\) is simple normal crossing, one has \(X=\bigcup_{j=1}^{n+1}U_j\) and therefore it suffices to prove that  for any \(j\in \{1,\dots, n+1\}\), one has
\begin{equation}\label{eq:GoalB_+}
\mathbf{B}_+(\O_{X_1(D)}(1))\cap\pi_{X}^{-1}(U_j)=\bigcup_{\substack{I\subset\{1,\dots,c\}\\ \# I<n}}\tilde{D}_I\cap\pi_X^{-1}(U_j).
\end{equation}
Without loss of generality, since our assumption is on all \(j\in \{1,\dots,n+1\}\), it suffices to prove \eqref{eq:GoalB_+} for \(j=1\). 
Observe that for any \(I\in \{1,\dots, c\}\), if \(1\in I\), then \(\tilde{D}_I\cap \pi_X^{-1}(U_1)=\varnothing\). The right hand side of \eqref{eq:GoalB_+} is therefore 
\[\bigcup_{\substack{I\subset\{2,\dots,c\}\\ \# I<n}}\tilde{D}_I\cap\pi_X^{-1}(U_1).\]
We now resolve the ideal sheaf \(\is_{\{1\}}=\jc_{\{1\}}\subset \O_{X_1(D)}\) by blowing up \(\tilde{D}_{\{1\}}\). Write \(D'=D-D_1\) and \(\widetilde{X}_1:={\rm Bl}_{X_1(D)}(\tilde{D}_{\{1\}})\stackrel{\tilde{\mu}}{\to}X_1(D)\). By \cref{resolve}, the map \(\tilde{\mu}\) resolves the indeterminacies of \(\gamma_{\{1\}}\) and therefore we have a commutative diagram 
\[
\xymatrix{
\widetilde{X}_1\ar[d]_{\tilde{\mu}}\ar[dr]^{\nu}&\\
X_1(D)\ar[dr]_{\pi_{X,D}}\ar@{-->}[r]_{\gamma_{\{1\}}}&X_1(D')\ar[d]^{\pi_{X,D'}}\\
& X\\
}
\]
satisfying moreover
\begin{equation}\label{eq:RelationPullBack}
\nu^*\O_{X_1(D')}(1)=\tilde{\mu}^*\O_{X_1(D)}(1)\otimes \O_{\widetilde{X}_1}(-E)\end{equation}
where \(E\) is the exceptional divisor of \(\tilde{\mu}\).

By hypothesis, \((X,D')\) has almost ample logarithmic cotangent bundle, therefore 
\[\mathbf{B}_+(\O_{X_1(D')}(1))=\bigcup_{\substack{I\subset \{2,\dots,c\}\\ \# I<n}}\tilde{D}'_I.\]
One can find  \(m\in \nb\) and an ample line bundle \(A\) on \(X\) such that \(\mathbf{B}_+(\O_{X_1(D')}(1))={\rm Bs}\big(\O_{X_1(D')}(m)\otimes \pi_{X,D'}^*A^{-1}\big)\).
After pulling back to \(\widetilde{X}_1\) and in view of \eqref{eq:RelationPullBack} this implies
\[{\rm Bs}\big(\tilde{\mu}^*(\O_{X_1(D)}(m)\otimes \pi_{X,D}^*A^{-1})\big)\subset {\rm Bs}\big(\nu^*(\O_{X_1(D')}(m)\otimes \pi_{X,D'}^*A^{-1})\big)\cup\Supp (E)\subset \nu^{-1}\Big(\mathbf{B}_+\big(\O_{X_1(D')}(1)\big)\Big)\cup \Supp(E).\]
Since \(\tilde{\mu}\) is birational and \(X_1(D)\) is smooth we obtain
\[{\rm Bs}\big(\O_{X_1(D)}(m)\otimes \pi_{X,D}^*A^{-1}\big)\subset \tilde{\mu}\Big(\nu^{-1}\Big(\mathbf{B}_+\big(\O_{X_1(D')}(1)\big)\Big)\cup \Supp(E)\Big)\subset {\gamma_{\{1\}}^{-1}\big(\mathbf{B}_+(\O_{X_1(D')}(1))\big)}\cup \tilde{D}_1,\] 
where \(\gamma_{\{1\}}\) is understood to be restricted to \(X_1(D)\setminus \tilde{D}_1\), where it is regular. Therefore 
\begin{equation}\label{eq:PresqueFini}
\mathbf{B}_+(\O_{X_1(D)}(1))\subset {\gamma_{\{1\}}^{-1}\big(\mathbf{B}_+(\O_{X_1(D')}(1))\big)}\cup \tilde{D}_1\subset \bigcup_{\substack{I\subset \{2,\dots,c\}\\ \# I<n}}\tilde{D}_I\cup \pi_X^{-1}(D_1).\end{equation}
To deduce the last inclusion, we use the fact that outside \(\pi_X^{-1}(D_1)\) the map \(\gamma_{\{1\}}\) is an isomorphism and that for any \(I\subset \{2,\dots,c\}\) with \(\# I<n\) one has \(\gamma_{\{1\}}^{-1}(\tilde{D}'_I)\cap \pi_{X,D}^{-1}(U_1)=\tilde{D}_I\cap \pi_{X,D}^{-1}(U_1)\). It the suffices to take the  intersection with \(\pi_X^{-1}(U_1)\) on both sides of \eqref{eq:PresqueFini} to deduce \eqref{eq:GoalB_+}.

\end{proof}
If in  \cref{cor:MoreComponents} we do not ask a condition on the \(D_{\hat{\jmath}}\) for all \(j\in \{1,\dots, n+1\}\), but only on one of them, then the above argument does not work. However, if one slightly weakens the expected conclusion by allowing the augmented base locus of \(\O_{X_1(D)}(1)\) to contain the different \(\tilde{D}_I\) for \(\# I=n\) as well, then one has the following stronger result.
\begin{proposition}
Let \(X\) be a smooth projective variety of dimension \(n\). Take \(c\in \nb^*\) and a simple normal crossing divisor  \(D=\sum_{i=1}^cD_i\) on \(X\). Let \(D':=\sum_{i=2}^cD_i\). If 
\[\mathbf{B}_+(\O_{X_1(D')}(1))=\bigcup_{\substack{I\subset \{1,\dots, c\}\\ \# I\leqslant n}}\tilde{D}'_I,\]
then 
\(\displaystyle{\mathbf{B}_+(\O_{X_1(D)}(1))=\bigcup_{\substack{I\subset \{1,\dots, c\}\\ \# I\leqslant n}}\tilde{D}_I}.\)
\begin{proof} As in the prof of \cref{cor:MoreComponents}, one can prove that 
\[\mathbf{B}_+(\O_{X_1(D)}(1))\subset \gamma_{\{1\}}^{-1}\big(\mathbf{B}_+(\O_{X_1(D')}(1))\big)\cup \tilde{D}_1.\]
Therefore we are just reduced to understand \(\gamma_{\{1\}}^{-1}\big(\mathbf{B}_+(\O_{X_1(D')}(1))\big)\). This can be done locally. Outside \(D_1\) there is nothing to prove because \(\gamma_{\{1\}}\) is an isomorphism and that moreover for any \(I\) one has \(\gamma_{\{1\}}^{-1}(\tilde{D}'_I)\cap \pi_{X,D}^{-1}(U_1)=\tilde{D}_I\cap \pi_{X,D}^{-1}(U_1)\), where \(U_1=X\setminus D_1\). Let \(x\in D_1\) and take an open neighborhood \(U\) of \(x\) with coordinates \((z_1,\dots,z_n)\) centered at \(x\) such that there exists \(k\in \{1,\dots,n\}\) for which \(D\cap U=(z_1\cdots z_k=0)\) and \(D_i\cap U=(z_i=0)\) for all \(i\in \{1,\dots, k\}\). Using the trivialization described above, the map \(\gamma_{\{1\}}\) is then given by
\[\gamma_{\{1\}}(z_1,\dots,z_n,[\xi_1,\dots,\xi_n])=(z_1,\dots,z_n,[z_1\xi_1,\xi_2,\dots,\xi_n]).\]
On the other hand, for any non-empty \(I=\{i_1,\dots, i_r\}\subset \{2,\dots,c\}\) with \(\# I=r\leqslant n\), writing \(\{j_1,\dots j_{n-r}\}=\{1,\dots,n\}\setminus I\), the equations defining \(\tilde{D}'_I\) in the  above coordinates for \(X_1(D')\) are
\[\tilde{D}'_I=(z_{i_1},\dots,z_{i_r},\xi_{j_1},\dots,\xi_{j_{n-r}}=0).\]
Since \(1\notin I\), one has \(1\in J\), and we can therefore suppose \(j_1=1\). Therefore, \(\gamma_{\{1\}}^{-1}(\tilde{D}'_I)\) is given in the above coordinates by 
\[(z_{i_1},\dots,z_{i_r},\xi_1,\xi_{j_2},\dots,\xi_{j_{n-r}}=0)\cup(z_{i_1},\dots,z_{i_r},z_1,\xi_{j_2},\dots,\xi_{j_{n-r}}=0)=\tilde{D}_I\cup \tilde{D}_{I\cup\{1\}}.\]
Since  \(\tilde{D}_{I\cup\{1\}}=\varnothing \) whenever \(\# I=n\), the result follows.
\end{proof}
\end{proposition}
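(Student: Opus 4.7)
The plan is to mimic the argument of \cref{cor:MoreComponents} with the single rational map \(\gamma_{\{1\}}: X_1(D)\dashrightarrow X_1(D')\) in place of the family \(\{\gamma_{\{j\}}\}_{j=1}^{n+1}\). First I would run that same blowup argument essentially verbatim: resolve the ideal \(\is_{\{1\}}=\jc_{\{1\}}\) by \(\tilde{\mu}:\widetilde{X}_1:={\rm Bl}_{X_1(D)}(\tilde{D}_{\{1\}})\to X_1(D)\), which by \cref{resolve} lifts \(\gamma_{\{1\}}\) to a morphism \(\nu:\widetilde{X}_1\to X_1(D')\) and produces the key relation \(\nu^*\O_{X_1(D')}(1)=\tilde{\mu}^*\O_{X_1(D)}(1)\otimes \O_{\widetilde{X}_1}(-E)\). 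Comparing base loci of \(\O(m)\otimes A^{-1}\) through \(\nu\) and \(\tilde\mu\) yields
\[
\mathbf{B}_+(\O_{X_1(D)}(1)) \subset \gamma_{\{1\}}^{-1}\big(\mathbf{B}_+(\O_{X_1(D')}(1))\big) \cup \tilde{D}_1,
\]
with \(\gamma_{\{1\}}\) understood on its domain of definition. The problem is thereby reduced to controlling \(\gamma_{\{1\}}^{-1}(\tilde{D}'_I)\) for each \(I\subset\{2,\dots,c\}\) with \(|I|\leqslant n\).

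This last step is a local computation near a point of \(D_1\). Working in the standard trivialization of \cref{sse:obstruction} with local coordinates \((z_1,\dots,z_n)\), \(\gamma_{\{1\}}\) has the form \((z,[\xi_1,\dots,\xi_n])\mapsto (z,[z_1\xi_1,\xi_2,\dots,\xi_n])\). Using \eqref{local}, the subvariety \(\tilde{D}'_I\) for \(I=\{i_1,\dots,i_r\}\) with \(1\notin I\) is locally cut out by \((z_{i_1},\dots,z_{i_r},\xi_1,\xi_{j_2},\dots,\xi_{j_{n-r}})\), where \(\{1,j_2,\dots,j_{n-r}\}\) is the complement of \(I\) in \(\{1,\dots,n\}\). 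Substituting \(\xi_1\mapsto z_1\xi_1\) and factoring shows that
\[
\gamma_{\{1\}}^{-1}(\tilde{D}'_I)\cap \pi_X^{-1}(U)=\big(\tilde{D}_I\cup \tilde{D}_{I\cup\{1\}}\big)\cap \pi_X^{-1}(U),
\]
and the crucial observation is that \(\tilde{D}_{I\cup\{1\}}=\varnothing\) whenever \(|I|=n\), since then \(D_{I\cup\{1\}}=\varnothing\) by dimension. Hence the preimage stays within \(\bigcup_{|J|\leqslant n}\tilde{D}_J\), and adding \(\tilde{D}_1=\tilde{D}_{\{1\}}\) (itself of this form) establishes the inclusion \(\subset\).

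The reverse inclusion \(\supset\) is immediate for \(|I|<n\) from \cref{sse:obstruction}, while for \(|I|=n\) with \(1\notin I\) one transports \(\tilde{D}'_I\subset \mathbf{B}_+(\O_{X_1(D')}(1))\) to \(\tilde{D}_I\subset \mathbf{B}_+(\O_{X_1(D)}(1))\) through the isomorphism of \(\gamma_{\{1\}}\) over \(\pi_X^{-1}(U_1)\) with \(U_1=X\setminus D_1\), using that SNC forces \(D_I\cap D_1=\varnothing\) when \(|I|=n\); the remaining cases \(1\in I\) are absorbed into \(\tilde{D}_1\) by \eqref{intersection}. The main technical obstacle is precisely the set-theoretic decomposition \(\gamma_{\{1\}}^{-1}(\tilde{D}'_I)=\tilde{D}_I\cup \tilde{D}_{I\cup\{1\}}\) and the observation that the extra component \(\tilde{D}_{I\cup\{1\}}\) vanishes exactly when the index bound \(|J|\leqslant n\) would otherwise be exceeded; everything else is routine bookkeeping around the previous proposition.
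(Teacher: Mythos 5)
Your proof of the inclusion \(\subset\) follows the paper's route essentially verbatim: the same reduction via the blowup of \(\tilde{D}_{\{1\}}\), the relation \(\nu^*\O_{X_1(D')}(1)=\tilde\mu^*\O_{X_1(D)}(1)\otimes\O_{\widetilde{X}_1}(-E)\), the same local computation in the trivialization yielding \(\gamma_{\{1\}}^{-1}(\tilde{D}'_I)=\tilde{D}_I\cup\tilde{D}_{I\cup\{1\}}\), and the same observation that \(\tilde{D}_{I\cup\{1\}}=\varnothing\) once \(\#I=n\). This is the content of the paper's proof.

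Where you go further is in spelling out the reverse inclusion \(\supset\), which the paper leaves implicit (``the result follows''), and there your argument has a gap. For \(\#I=n\) with \(1\notin I\), ``transporting'' \(\tilde{D}'_I\subset\mathbf{B}_+(\O_{X_1(D')}(1))\) to \(\tilde{D}_I\subset\mathbf{B}_+(\O_{X_1(D)}(1))\) via the isomorphism of \(\gamma_{\{1\}}\) over \(\pi_X^{-1}(U_1)\) is not immediate: \(\mathbf{B}_+\) is a global invariant. Note in particular that \(\O_{X_1(D)}(1)|_{\tilde{D}_I}\) is ample when \(\#I=n\) (it is \(\O_{\P^{n-1}}(1)\) over each point of the finite set \(D_I\)), so the membership of \(\tilde{D}'_I\) in the augmented base locus downstairs is a fact about global sections, not about the local geometry near \(\tilde{D}'_I\), and a local isomorphism does not transfer it. One would need to show that every global section of \(\O_{X_1(D)}(m)\otimes\pi_X^*A^{-1}\) vanishes along \(\tilde{D}_I\), which does not follow from knowing this for sections pulled back from \(X_1(D')\) together with the identification over \(U_1\): the pullback injection \(H^0(X_1(D'),\cdot)\hookrightarrow H^0(X_1(D),\cdot)\) is not a priori surjective. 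Similarly, the remark that cases \(1\in I\) are ``absorbed into \(\tilde{D}_1\) by \eqref{intersection}'' is not a valid deduction: for \(\#I=n\) with \(1\in I\), \(\tilde{D}_I\) is not contained in \(\tilde{D}_{\{1\}}\); \eqref{intersection} only governs intersections \(\tilde{D}_I\cap\tilde{D}_J\), not containments. So your reverse inclusion as written does not hold up; since the paper also omits the argument, this reflects a genuine subtlety that the proof (yours and theirs) would benefit from addressing.
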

\subsection{Families of smooth pairs}
Let us collect here some elementary observations concerning families of smooth pairs.
\begin{dfn}\label{def:smooth pair}
	A \emph{family of smooth pairs} consists in the following data:
\begin{enumerate}
\item A smooth quasi-projective variety \(\X\), a smooth quasi-projective variety \(S\) and a smooth proper morphism 
\[\rho:\X\to S.\]
\item A simple normal crossing divisor \(\mathscr{D}=\sum_{i=1}^c\mathscr{D}_i\) on \(\X\) such that given any \(s\in S\), denoting by \(\iota_s:X_s\hookrightarrow \X\) the canonical injection, the divisor \(D_s:=\iota_s^*\mathscr{D}\) is simple normal crossing.
\end{enumerate}
We will denote such a family by \((X_s,D_s)_{s\in S}\) or more precisely by \((\X,\mathscr{D})\stackrel{\rho}{\to}S\) if needed.
\end{dfn}
Let us first  observe that as a consequence of the local inverse theorem in several complex variables we obtain that in the analytic category, families of smooth pairs are locally trivial.
\begin{lem}\label{lem:LocTrivial}
Let \((\X,\ds)\stackrel{\rho}{\to} S\) be a family of smooth pairs. Set \(n:=\dim \rho\). For any \(x\in \X\) there exists a neighborhood \(U\subset \X\),  a neighborhood \(U_1\subset S\) of \(\rho(x)\) and an open subset \(U_2\subset \cb^n\) with coordinates \((z_1,\dots,z_n)\) such that there exists an isomorphism 
\[\Phi:U_1\times U_2\to U \]
satisfying \(\rho\circ \Phi=\pr_1\) (where \(\pr_1:U_1\times U_2\to U_1\) is the projection on the first factor) and 
such that for some \(k\leqslant  n\) one has
\[\Phi^*\ds=(z_1 \cdots z_k=0).\]
\end{lem}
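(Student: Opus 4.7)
The plan is to apply the holomorphic implicit function theorem after carefully choosing defining equations of the components of $\mathscr{D}$ passing through $x$ and extending them to a submersion chart.

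Fix $x \in \mathscr{X}$ and set $s:=\rho(x)$. After renumbering the components of $\mathscr{D}$, there exists $k \in \{0,\ldots,c\}$ such that the components of $\mathscr{D}$ containing $x$ are exactly $\mathscr{D}_1,\ldots,\mathscr{D}_k$. Since $\mathscr{D}$ is simple normal crossing on $\mathscr{X}$, we may pick a neighborhood $V$ of $x$ in $\mathscr{X}$ and holomorphic functions $f_1,\ldots,f_k \in \O_{\mathscr{X}}(V)$ such that $\mathscr{D}_i \cap V = (f_i=0)$ for $i \in \{1,\ldots,k\}$, and such that $df_1(x),\ldots,df_k(x)$ are linearly independent in $T_x^*\mathscr{X}$. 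Because the fiber divisor $D_s$ is simple normal crossing on $X_s$, the pullbacks $f_i|_{X_s}$ still have linearly independent differentials at $x$ in $T_x^*X_s$; in particular $k \leq n = \dim \rho$.

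Next, the restrictions $(f_1|_{X_s},\ldots,f_k|_{X_s})$ can be completed to a holomorphic coordinate system on a neighborhood of $x$ in $X_s$: pick holomorphic functions $g_{k+1},\ldots,g_n$ defined on some neighborhood of $x$ in $X_s$ such that $(f_1|_{X_s},\ldots,f_k|_{X_s},g_{k+1},\ldots,g_n)$ are local coordinates centered at $x$. Extend each $g_j$ to a holomorphic function $f_j$ on a neighborhood of $x$ in $\mathscr{X}$, possibly shrinking $V$ (such an extension exists in the analytic category, for instance by taking the $g_j$ to be pulled back from coordinates on some transverse disk and using that $\rho$ is a submersion, or simply by Cartan extension arguments). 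We may then assume $f_1(x)=\cdots=f_n(x)=0$.

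Consider the holomorphic map
\[
F := (f_1,\ldots,f_n,\rho) : V \longrightarrow \mathbb{C}^n \times S.
\]
Its differential at $x$ restricted to $T_x X_s = \ker d\rho(x)$ is an isomorphism onto $\mathbb{C}^n \times \{0\}$ by construction, and $d\rho(x)$ is surjective since $\rho$ is smooth; hence $dF(x)$ is an isomorphism onto $\mathbb{C}^n \times T_sS$. By the holomorphic inverse function theorem, $F$ is a biholomorphism from some open neighborhood $U \subset V$ of $x$ onto a product $U_2 \times U_1$, where $U_2 \subset \mathbb{C}^n$ is a neighborhood of $0$ with coordinates $(z_1,\ldots,z_n)$ and $U_1 \subset S$ is a neighborhood of $s$. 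Setting $\Phi := F^{-1}$ (composed with the swap of factors), the relation $\rho \circ \Phi = \pr_1$ is automatic by construction, and since $\Phi^* f_i = z_i$ for $i \in \{1,\ldots,k\}$ while the other components of $\mathscr{D}$ do not meet $U$ after shrinking, we get $\Phi^*\mathscr{D} = (z_1\cdots z_k = 0)$.

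The argument is essentially routine once one realizes the step that requires care: the components $\mathscr{D}_i$ through $x$ must simultaneously cut out coordinate hyperplanes both in $\mathscr{X}$ near $x$ and in the fiber $X_s$ near $x$. The hypothesis that both $\mathscr{D}$ and each restricted divisor $D_s$ are simple normal crossing is precisely what guarantees that the differentials of the $f_i$ remain independent after restriction to $X_s$, so the completion to a coordinate system on the fiber goes through without obstruction.
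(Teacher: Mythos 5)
Your proof is correct and follows exactly the route the paper gestures at: the paper simply invokes the holomorphic inverse function theorem, and you supply the details — choosing local equations for the components of $\ds$ through $x$, using the simple normal crossing hypothesis on both $\ds$ and the fiber divisor $D_s$ to get independence of the restricted differentials and hence $k\leqslant n$, completing to fiber coordinates, extending, and applying the inverse function theorem to $(f_1,\dots,f_n,\rho)$. No gaps; this is the same argument, just written out.
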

Let us now make the following observation.
\begin{lem}\label{lem:AlgebraicSmooth} Let \(\X\) be  a smooth quasi-projective variety endowed  with a simple normal crossing divisor \(\mathscr{D}\). Let \(S\) be a quasi-projective variety and suppose that we are given a proper morphism \(\rho:\X\to S\). If there exists \(s_0\in S\) such that \((X_{s_0},D_{s_0})\) is a smooth pair (where \(D_{s_0}=\mathscr{D}|_{X_{s_0}}\)), then there exists a non-empty Zariski open subset \(U\subset S\) such that the restricted family \((\rho^{-1}(U),\mathscr{D}|_{\rho^{-1}(U)})\stackrel{\rho}{\to} U\) is a family of smooth pairs.
\end{lem}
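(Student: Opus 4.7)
The plan is to find $U$ as a finite intersection of Zariski-open neighborhoods of $s_0$, each ensuring one aspect of \cref{def:smooth pair}: namely (i) $\rho|_{\rho^{-1}(U)}$ is smooth (properness is given by hypothesis), and (ii) for each $s\in U$, $D_s$ is simple normal crossing on $X_s$. Both conditions will follow by combining openness of the smooth locus of a morphism with the properness of $\rho$, applied once to $\rho$ itself and then to each restricted morphism $\rho|_{\mathscr{D}_I}$.

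First I would establish (i). Since $(X_{s_0},D_{s_0})$ is a smooth pair, the fiber $X_{s_0}$ is smooth of the expected dimension $\dim\mathcal{X}-\dim S$, so $d\rho$ is surjective at every point of $X_{s_0}$ and $\rho$ is smooth there. The non-smooth locus of $\rho$ is therefore a closed subset of $\mathcal{X}$ disjoint from $X_{s_0}$; by properness its image is closed in $S$ and avoids $s_0$, so its complement is a Zariski-open neighborhood $V\subset S$ of $s_0$ over which $\rho$ is smooth.

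For (ii), I would use the standard characterization: a divisor $\sum E_i$ on a smooth variety $Y$ is simple normal crossing if and only if for each non-empty $I$, the intersection $\bigcap_{i\in I}E_i$ is either empty or smooth of codimension $|I|$ in $Y$. For each non-empty $I\subset\{1,\dots,c\}$, set $\mathscr{D}_I:=\bigcap_{i\in I}\mathscr{D}_i$; by SNC of $\mathscr{D}$ on $\mathcal{X}$, this is empty or smooth of codimension $|I|$. The restriction $\rho|_{\mathscr{D}_I}:\mathscr{D}_I\to S$ remains proper, and its fiber over $s_0$, namely $\mathscr{D}_I\cap X_{s_0}=(D_{s_0})_I$, is empty or smooth of codimension $|I|$ in $X_{s_0}$ by the SNC hypothesis on $D_{s_0}$. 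Hence $\rho|_{\mathscr{D}_I}$ is smooth at every point of its fiber over $s_0$, and exactly the argument from the previous paragraph produces a Zariski-open $V_I\subset S$ containing $s_0$ over which $\rho|_{\mathscr{D}_I}$ is smooth (take $V_I=S$ when $\mathscr{D}_I$ is empty).

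Finally I would set $U:=V\cap\bigcap_{\emptyset\neq I\subset\{1,\dots,c\}}V_I$, a finite intersection of Zariski-open neighborhoods of $s_0$, hence non-empty and open. For every $s\in U$ and every non-empty $I$, the fiber $\mathscr{D}_I\cap X_s$ is empty or smooth of dimension $\dim\mathscr{D}_I-\dim S=\dim X_s-|I|$, i.e.\ of codimension $|I|$ in $X_s$. The case $I=\{i\}$ in particular rules out $\mathscr{D}_i\supseteq X_s$, so $D_s=\iota_s^*\mathscr{D}$ is a well-defined divisor, and the SNC characterization above then yields that $D_s$ is simple normal crossing on $X_s$. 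I do not expect any substantive obstacle here: the argument reduces entirely to the elementary fact that the smooth locus of a proper morphism is open in the base, applied once to $\rho$ and once to each of the finitely many stratifying subvarieties $\mathscr{D}_I$.
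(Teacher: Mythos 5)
Your proof is correct and follows the same underlying idea as the paper's one-line argument (the locus where the fiber pair fails to be smooth is closed in $\X$, so by properness of $\rho$ its image in $S$ is a proper closed subset). You simply make explicit what the paper leaves implicit: you identify the bad locus as the union of the non-smooth loci of $\rho$ and of each restriction $\rho|_{\mathscr{D}_I}$, and use miracle flatness on the fiber over $s_0$ to show it misses $X_{s_0}$, which is precisely the justification for the paper's closedness claim.
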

\begin{proof} This is a consequence of the properness of the map \(\rho\) since the locus of point \(x\in \X\) at which the divisor \(D_{\rho(x)}\) is not simple normal crossing is closed.
\end{proof}

To any family of smooth pairs \((\X,\ds)\stackrel{\rho}{\to}S\),  one can associate a relative logarithmic cotangent bundle \(\Omega_{\X/S}(\log \ds)\) defined as the quotient of \(\Omega_{\X}(\log \ds)\) by \(\rho^*\Omega_S\). By definition, it sits in the following commutative diagram 
	\[
	\xymatrix{
0\ar[r] & \rho^*\Omega_{S}\ar[r]\ar@{=}[d]&\Omega_{\X}\ar[r]\ar[d] &\Omega_{\X/S}\ar[r]\ar[d]& 0 \\
0\ar[r] & \rho^*\Omega_{S}\ar[r]&\Omega_{\X}(\log \ds)\ar[r]&\Omega_{\X/S}(\log \ds)\ar[r]& 0. 
}
	\]
	Observe that Lemma \ref{lem:LocTrivial} implies that  \(\Omega_{\X/S}(\log \ds)\) is a locally free sheaf. We denote its dual by \(T_{\X/S}(-\log \ds)\). Observe also that for any \(s\in S\), one has \(\Omega_{X_s}(\log D_s)=\Omega_{\X/S}(\log \ds)|_{X_s}\) and \(T_{X_s}(-\log D_s)=T_{\X/S}(-\log \ds)|_{X_s}\).
\subsection{A resolution algorithm}
Let us state here the main properties of a resolution algorithm we will construct in  \Cref{se:Resolution}.
\begin{proposition}\label{prop:ResolutionAlgo}
There exists a resolution algorithm which to every smooth pair \((X,D)\) such that \(D=\sum_{i=1}^cD_i\) with \(c\leqslant \dim X\), associates a  smooth variety \(\widehat{X}_1(D)\) and a birational morphism (the so-called \emph{minimal resolution} in \Cref{indeterminacies})
\[\mu:\widehat{X}_1(D)\to X_1(D),\]
satisfying the following properties.
\begin{thmlist}
\item \label{part1}With the notation of Definition \ref{obstruction}, the morphism \(\mu\) is a resolution of \(\js_I\) for every \(I\subsetneq \{1,\dots, c\}\). In particular \(\mu\) is a simultaneous log resolution of \(\{\js_i\}_{i\in\{1,\ldots,c\}}\). Moreover, \(\mu\) is birational outside \(\bigcup_{\substack{I\subset \{1,\dots, c\}\\ \# I< \dim X}}\tilde{D}_I \).
\item \label{part2}Given  a smooth subvariety \(Z\subset X\) intersecting \(D\) transversally (so that \((Z,D|_Z)\) is a smooth pair) and assuming that \(c\leqslant \dim Z\), the resolution \(\widehat{Z}_1( D|_Z)\) is the strict transform in \(\widehat{X}_1( D)\) of \({Z}_1(D|_Z)\subset {X}_1(D).\)
\item \label{part3}Given a family of smooth pairs \((\X,\ds)\stackrel{\rho}{\to} S\) such that the number of components of \(\ds\) is less than the dimension of the fibers of \(\rho\), there exists a smooth variety \(\widehat{\P}(\Omega_{\X/S}(\log \ds))\) with a birational morphism \(\mu^{\rm rel}:\widehat{\P}(\Omega_{\X/S}(\log \ds))\to {\P}(\Omega_{\X/S}(\log \ds))\) 
such that for any \(s\in S\), denoting \(X_{s}=\rho^{-1}(s)\) and \(D_s=\ds|_{X_s}\), and viewing \(X_{s,1}(D_s)\) as a subvariety of \({\P}(\Omega_{\X/S}(\log \ds))\), one has 
\[(\mu^{\rm rel})^{-1}(X_{s,1}(D_s))\cong \widehat{X}_{s,1}(D_s).\]
 Moreover, there exists effective divisors  \(\mathscr{E}_1,\dots, \mathscr{E}_m\) on \(\widehat{\P}(\Omega_{\X/S}(\log \ds))\) such that for any \(s\in S\), the set of  irreducible exceptional divisor of the map \({\mu}:\widehat{X}_{s,1}(D_s)\to {X}_{s,1}(D_s)\)  is 
 \[\Big\{\mathscr{E}_1|_{\widehat{X}_{s,1}(D_s)},\dots, \mathscr{E}_m|_{\widehat{X}_{s,1}(D_s)}\Big\}.\]
\end{thmlist}
\end{proposition}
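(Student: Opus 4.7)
The plan is to realize \(\mu\) as a sequence of blowups of (strict transforms of) the obstruction loci \(\tilde{D}_I\), ordered by increasing cardinality \(|I|\). Concretely, I would define inductively
\[
X_1(D) = W_0 \stackrel{b_1}{\longleftarrow} W_1 \stackrel{b_2}{\longleftarrow} W_2 \stackrel{b_3}{\longleftarrow} \cdots,
\]
where \(b_k\) is the blowup of \(W_{k-1}\) along the (disjoint) union of the strict transforms of all \(\tilde{D}_I\) with \(|I|=k\) (and \(|I|<n\)), stopping after \(k=\min(c,n-1)\) steps. The resulting variety is \(\widehat{X}_1(D)\). The disjointness at step \(k\) is what requires checking: the intersection relation \eqref{intersection} forces two distinct \(\tilde{D}_I,\tilde{D}_J\) with \(|I|=|J|=k\) to meet only inside some \(\tilde{D}_{I\cap J}\) with \(|I\cap J|<k\), and this smaller stratum has already been blown up at an earlier step, which separates the strict transforms (a standard fact for smooth centers meeting along a smooth common subvariety).

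To prove \cref{part1}, I would work in the local model established in the proof of \cref{resolve}. Under the trivialization \(\pi_X^{-1}(U)\cong U\times \mathbb{P}^{n-1}\), the family \(\{\tilde{D}_J\}_{\varnothing\ne J\subset\{1,\dots,m\}}\) corresponds fiberwise to a complete flag of linear subspaces \(\mathbb{P}^0\subset\mathbb{P}^1\subset\cdots\subset\mathbb{P}^{m-1}\) in each \(\mathbb{P}^{n-1}\), coupled with the conditions \(z_i=0\) on the base. The proposed blowup sequence recovers, fiberwise, the iterated blowup of this linear flag (the wonderful model of a central arrangement), and a direct computation in the charts of the iterated blowup shows that the total transform of the local generators \(\js_I=(z_1\xi_1,\dots,z_m\xi_m,\xi_{m+1},\dots,\xi_n)\) becomes principal with zero locus a simple normal crossings divisor. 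The assertion that \(\mu\) is an isomorphism outside \(\bigcup_{|I|<n}\tilde{D}_I\) is immediate, since every blowup center lies in this union.

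For \cref{part2}, the transversality of \(Z\) with respect to \(D\) and the bound \(|I|\leqslant c\leqslant\dim Z\) ensure that \(Z_1(D|_Z)\) meets each \(\tilde{D}_I^{X}\) transversally inside \(X_1(D)\), and \(\tilde{D}_I^{X}\cap Z_1(D|_Z)=\tilde{D}_I^{Z}\). Since blowing up a smooth center transverse to a smooth subvariety commutes with taking strict transforms, the inductive construction of \(\widehat{X}_1(D)\) restricts, step by step, to the analogous inductive construction of \(\widehat{Z}_1(D|_Z)\) on the strict transform of \(Z_1(D|_Z)\). For \cref{part3}, one carries out the same inductive construction relatively: by \cref{lem:LocTrivial} each relative obstruction locus \(\tilde{\mathscr{D}}_I\subset\mathbb{P}(\Omega_{\X/S}(\log\mathscr{D}))\) is smooth and flat over \(S\) with fibers the absolute \(\tilde{D}_I\). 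Blowing up in the same order yields \(\widehat{\mathbb{P}}(\Omega_{\X/S}(\log\mathscr{D}))\), and \cref{part2} applied fiberwise gives the identification \((\mu^{\rm rel})^{-1}(X_{s,1}(D_s))\cong \widehat{X}_{s,1}(D_s)\). The divisors \(\mathscr{E}_1,\dots,\mathscr{E}_m\) are precisely the exceptional divisors of the individual relative blowups \(b_k\), and this same compatibility shows that their restrictions to each fiber exhaust the irreducible exceptional divisors of the absolute resolution.

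The principal obstacle is \cref{part1}: combining the correct choice of ordering with the verification that at each step the centers remain smooth, stay disjoint, and that the local ideals \(\js_I\) become principal after a finite number of steps. This reduces to a chart-by-chart analysis on the iterated blowup of a linear flag, which is conceptually clean (it is essentially a De Concini--Procesi wonderful construction applied fiberwise along the flag \(\mathbb{P}^0\subset\cdots\subset\mathbb{P}^{m-1}\)) but requires careful combinatorial bookkeeping of the charts and the associated strict transforms.
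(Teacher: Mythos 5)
Your plan — blow up the strict transforms of the obstruction loci $\tilde{D}_I$ in order of increasing cardinality, check via \eqref{intersection} that centers at each stage are disjoint smooth, and verify in the local chart that the ideals $\js_I$ become principal with SNC support — is precisely the strategy the paper implements. The paper carries out the deferred ``chart-by-chart analysis'' by introducing the formalism of \emph{simple ideals} and \emph{compatible systems} (Section~5.1--5.2), whose inductive lemmas (\cref{blowing-up}, \cref{disjoint}, \cref{lem:new system}) are exactly the bookkeeping you identify as the principal obstacle.

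Two small inaccuracies worth flagging, though they do not change the substance. First, the paper's ``minimal resolution'' performs $c-1$ blowup steps, not $\min(c,n-1)$: to resolve every $\js_I$ with $I\subsetneq\{1,\dots,c\}$ one only needs to handle $\tilde{D}_J$ with $\#J\leqslant c-1$, and blowing up $\tilde{D}_{\{1,\dots,c\}}$ is superfluous (and, when $c<n$, your count would do that extra step). Second, the fiberwise picture of the $\tilde{D}_I$ over a base point is not a single complete flag but the full coordinate-subspace \emph{arrangement} $\{\xi_j=0,\ j\notin I\}_{I}$; moreover the centers $\tilde{D}_I$ also carry the conditions $z_i=0$ for $i\in I$ on the base, so the blowup is not literally a fiberwise wonderful model — this coupling between base and fiber coordinates is exactly why the paper works with the product-type ideal $\js_I=(z_1\xi_1,\dots,z_m\xi_m,\xi_{m+1},\dots,\xi_n)$ rather than a purely projective arrangement.
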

This allows us to make the following definition:
\begin{dfn} Let \((X,D)\) be a smooth pair. Suppose that \(X\) is projective. We say that the pair \((X,D)\) satisfies property \eqref{eq:star} if there exists a \(\mu\)-exceptional effective \(\qb\)-divisor \(F\in {\rm Div}_{\qb}(\widehat{X}_1(D))\) such that the \(\qb\)-line bundle 
\begin{equation}\label{eq:star}\tag{$\ast$}
\mu^*(\O_{X_1(D)}(1))\otimes \O_{\widehat{X}_1(D)}(-F)\ \ \text{is ample}.
\end{equation}
\end{dfn}

From \cref{prop:ResolutionAlgo} we deduce the following useful consequence. 
\begin{cor} 
\begin{thmlist}
\item Property \eqref{eq:star} is a Zariski open property. Namely,  given  a family of smooth pairs \((\X,\ds)\stackrel{\rho}{\to} S\), if there exists \(s_0\in S\) such that the pair \((X_{s_0},D_{s_0})\) satisfies  \eqref{eq:star},  then there exists a non-empty Zariski open subset \(U\subset S\) such that for any \(s\in U\) the pair \((X_s,D_s)\) satisfies \eqref{eq:star}.
\item \label{implies ample} Let \((X,D)\) be a smooth pair and et \(L\) be an ample line bundle on \(X\). If \((X,D)\) satisfies  \eqref{eq:star}, then the  logarithmic cotangent bundle \(\Omega_X(\log D)\) is almost ample.
\end{thmlist}
\end{cor}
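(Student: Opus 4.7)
The plan is to handle the two parts independently: for (i), promote the divisor witnessing \eqref{eq:star} at \(s_0\) to a relative \(\qb\)-divisor on the resolution in \cref{part3} and invoke openness of ampleness in proper families; for (ii), deduce bigness of \(\O_{X_1(D)}(1)\) by pullback and then identify the augmented base locus by pushing sections forward through \(\mu\).

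For part (i), write the witnessing exceptional divisor at \(s_0\) as \(F_{s_0}=\sum_{i=1}^m a_i\mathscr{E}_i|_{\widehat{X}_{s_0,1}(D_{s_0})}\) with \(a_i\in\qb_{\geq 0}\), using the global divisors \(\mathscr{E}_1,\dots,\mathscr{E}_m\) provided by \cref{part3}. Form the \(\qb\)-line bundle
\[\mathscr{L}:=(\mu^{\rm rel})^*\O_{\P(\Omega_{\X/S}(\log \ds))}(1)\otimes \O\bigl(-\textstyle\sum_{i=1}^m a_i\mathscr{E}_i\bigr)\]
on \(\widehat{\P}(\Omega_{\X/S}(\log \ds))\). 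By \cref{part3}, its restriction to the fiber over \(s_0\) is exactly \(\mu^*\O_{X_{s_0,1}(D_{s_0})}(1)\otimes\O(-F_{s_0})\), hence ample by hypothesis. Since \(\widehat{\P}(\Omega_{\X/S}(\log \ds))\to S\) is proper (being projective over the proper family \(\rho:\X\to S\)), clearing denominators and invoking the standard openness of ampleness on fibers of a proper morphism produces a Zariski open \(U\ni s_0\) such that \(\mathscr{L}|_{\widehat{X}_{s,1}(D_s)}\) is ample for every \(s\in U\). The divisor \(\sum_i a_i\mathscr{E}_i|_{\widehat{X}_{s,1}(D_s)}\) remains \(\mu_s\)-exceptional by \cref{part3}, so this restriction witnesses \eqref{eq:star} on \((X_s,D_s)\).

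For part (ii), write \(\mu^*\O_{X_1(D)}(1)\sim_{\qb} A+F\) with \(A\) ample and \(F\) effective; then \(\mu^*\O_{X_1(D)}(1)\) is big on \(\widehat{X}_1(D)\), and bigness descends to \(\O_{X_1(D)}(1)\) along the birational morphism \(\mu\). The inclusion
\[\bigcup_{\varnothing\neq I\subsetneq\{1,\dots,c\},\,\#I<n}\tilde{D}_I\ \subset\ \mathbf{B}_+\bigl(\O_{X_1(D)}(1)\bigr)\]
is the content of \Cref{sse:obstruction}. For the reverse inclusion, fix a very ample \(H\) on \(X_1(D)\) and any \(y\in X_1(D)\setminus\bigcup_{\#I<n}\tilde{D}_I\). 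By \cref{part1}, \(\mu\) is an isomorphism in a neighborhood of \(y\) and \(\mu^{-1}(y)\notin \Supp(F)\), so the canonical section of \(\O(mF)\) is nonzero at \(\mu^{-1}(y)\) for every \(m\). Ampleness of \(\mu^*\O_{X_1(D)}(1)\otimes\O(-F)\) supplies, for \(m\) sufficiently large and divisible, a global section \(\hat{s}\) of \(\mu^*(\O_{X_1(D)}(m)\otimes H^{-1})\otimes\O(-mF)\) nonvanishing at \(\mu^{-1}(y)\). Multiplying \(\hat{s}\) by the canonical section of \(\O(mF)\) yields a section of \(\mu^*(\O_{X_1(D)}(m)\otimes H^{-1})\) nonvanishing at \(\mu^{-1}(y)\); applying the projection formula (\(\mu_*\O=\O\) since \(X_1(D)\) is smooth and \(\mu\) is birational) we push it forward to a section of \(\O_{X_1(D)}(m)\otimes H^{-1}\) nonvanishing at \(y\), whence \(y\notin\mathbf{B}_+(\O_{X_1(D)}(1))\).

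The main delicate point lies in part (i): we need both that the scheme-theoretic fiber of \(\mu^{\rm rel}\) over \(s\) is \(\widehat{X}_{s,1}(D_s)\) and that each \(\mathscr{E}_i\) restricts on fibers to an irreducible exceptional divisor of \(\mu_s\), which is precisely the content of \cref{part3}. Once these compatibilities are in hand, the proof reduces to standard openness of ampleness in proper families for (i), and to a projection-formula push-forward argument using the isomorphism-locus statement in \cref{part1} for (ii).
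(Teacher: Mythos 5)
Your proof is correct and follows essentially the same route as the paper: part (i) is obtained by promoting the witnessing exceptional divisor to a relative \(\qb\)-divisor via \cref{part3} and invoking openness of ampleness in proper families, and part (ii) deduces the base-locus inclusion from ampleness of \(\mu^*\O_{X_1(D)}(1)\otimes\O(-F)\) together with \cref{part1}. The only difference is that the paper's proof of (ii) is terse — it simply states \(\mathbf{B}_+\big(\mu^*\O_{X_1(D)}(1)\big)\subset\Supp(F)\) and appeals to birationality of \(\mu\) and normality of \(X_1(D)\) to descend — whereas you make this explicit via a projection-formula push-forward of sections, which is a perfectly valid unpacking of the same idea.
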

\begin{proof} The first claim follows from \Cref{part3} by the openness property of ampleness. To prove the second claim, one can observe that if \(\mu^*(\O_{X_1(D)}(1))\otimes \O_{\widehat{X}_1(D)}(-F)\) is ample, then \(\mathbf{B}_+\big(\mu^*(\O_{X_1(D)}(1))\big)\subset \Supp(F)\) and therefore since \(\mu\) is birational and \(X_1(D)\) is smooth.
\[\mathbf{B}_+(\O_{X_1(D)}(1))\subset \mu\big(\Supp(F)\big)\subset \bigcup_{\substack{I\subset \{1,\dots, c\}\\ \# I< \dim X}}\tilde{D}_I,\]
in view of   \Cref{part1}.
\end{proof}
\subsection{Logarithmic connections}\label{meromorphic connection}
Let $L$ be a line bundle over a (not necessarily compact) complex manifold $X$.  Take a smooth hypersurface $D\in |L|$ (if such a hypersurface exists) and let $s_D\in H^0(X,L)$ be a section defining $D$. There exists a logarithmic  connection 
\begin{eqnarray}\label{meromorphic}
\nabla_{s_D}:L\rightarrow L\otimes \Omega_X(\log D),
\end{eqnarray}
defined locally by
\[\nabla_{s_D}s\stackrel{\rm loc}{:=}ds-s\frac{ds_D}{s_D}.\]

By this we mean that over an open subset \(U\) with a fixed trivialization of \(L\), if we let \(s_U,s_{D,U}\in \O(U)\) to be local representative for \(s\in \Gamma(U,L)\) and \(s_D\in \Gamma(X,L)\) under our choice of trivialization then 
\[\nabla_{s_D}s := ds_U-s_U\frac{d s_{D,U}}{s_{D,U}}.\]
One verifies without difficulty that this local definition defines a logarithmic connection on \(L\) with logarithmic poles along \(D\). 

Let us observe the following tautological relation 
\begin{equation}\label{eq:TautologicalVanishing}
\nabla_{s_D}(s_D)=0.
\end{equation}
Moreover, if \(Y\subset X\) is a smooth subvariety transverse to \(D\), and if one denotes \(s_{Y,D}:=s_D|_Y\), then one has the following commutative diagram
\[
\xymatrix{
L\ar[r]^{\!\!\!\!\!\!\!\!\!\!\!\!\!\!\! \!\!\!\!\!\!\!\!\!\!\nabla_{s_D}}\ar[d]& L\otimes \Omega_X(\log D)\ar[d]\\
L|_Y\ar[r]_{\!\!\!\!\!\!\!\!\!\!\!\!\!\!\!\!\!\!\!\!\nabla_{s_{Y,D}}} & L|_Y\otimes \Omega_Y(\log D|_Y),
}
\]
where the right vertical arrow is induced by the composition
\[ \Omega_X(\log D)\to \Omega_X(\log D)|_Y\to  \Omega_Y(\log D|_Y).\]

\section{Proof of the main results}\label{se:Main}
\subsection{The main technical result}
We will establish the following stronger version of our main result. 
\begin{thm}\label{maintechnical}
Let \(Y\) be a projective variety of dimension \(n\) and let \(L\) be a very ample line bundle on \(Y\). Take  \(\ep_1,\dots,\ep_n\in\nb^*\) and take  integers \(\delta_1,\dots,\delta_n\geqslant 4n-1\). Then, for any \(i\in \{1,\dots,n\}\) set \(b_i=\delta_i^{-1}\prod_{j=1}^n\delta_j\). 
For any 
\[r>\sum_{i=1}^nb_i(\ep_i+\delta_i)\] 
and for general hypersurfaces \(H_1\in |L^{\ep_1+(r+1)\delta_1}|,\dots,H_n\in |L^{\ep_n+(r+1)\delta_n}|\), writing \(H=\sum_{i=1}^nH_i\), the logarithmic cotangent bundle \(\Omega_Y(\log H)\) is almost ample. Moreover, the complement \(U=Y\setminus H\) is hyperbolically embedded in \(Y\).
\end{thm}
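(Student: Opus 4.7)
The plan is to reduce the theorem to verifying, for a single well chosen pair $(Y_0,H_0)$ in the linear system $|L^{\varepsilon_1+(r+1)\delta_1}|\times\cdots\times|L^{\varepsilon_n+(r+1)\delta_n}|$, the strong positivity property \eqref{eq:star} of \cref{prop:ResolutionAlgo}. The first part of the corollary following \cref{prop:ResolutionAlgo} ensures that \eqref{eq:star} is Zariski open in families of smooth pairs; hence if it holds for one choice of parameters it holds for general choices, and \cref{implies ample} then delivers the almost ampleness of $\Omega_Y(\log H)$ for a general $H$.

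To build such an example $(Y_0,H_0)$, I would take hypersurfaces $H_i=(F_i=0)$ of Fermat type,
\[
F_i=\sum_{|I|=\delta_i} a_{i,I}\, y^{(r+1)I},
\]
the $a_{i,I}$ being generic sections of $L^{\varepsilon_i}$. Each logarithmic connection $\nabla_{F_i}$ of \cref{meromorphic connection} satisfies the tautological identity $\nabla_{F_i}(F_i)=0$ from \eqref{eq:TautologicalVanishing}. Expanding and factoring out the $r$-th power common to every monomial, one rewrites this identity as $\sum_{|I|=\delta_i}\alpha_{i,I}\,x^I=0$ with $x_j=y_j^r$ and $\alpha_{i,I}\in H^0(Y,\Omega_Y(\log H_i)\otimes L^{\varepsilon_i+\delta_i})$. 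The $n$ such relations jointly determine a rational map
\[
\Phi:Y_1(H)\dashrightarrow \mathscr{Y},
\]
where $\mathscr{Y}\to |L^{\delta_1}|\times\cdots\times|L^{\delta_n}|$ is the universal family of complete intersections of codimension $n$ and multidegree $(\delta_1,\dots,\delta_n)$.

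The technical heart is to convert sections of the big-and-nef line bundle $\mathcal{O}(b_1,\dots,b_n)\boxtimes \mathcal{O}_Y(-1)|_{\mathscr{Y}}$ into logarithmic symmetric differentials on $(Y,H)$ vanishing along an ample divisor. Because the map $a_{i,I}\mapsto \alpha_{i,I}$ is non-linear, a naive pullback along $\Phi$ is not available, so I would, as suggested in the introduction, embed $(Y,H)$ isomorphically as $(Z,D'|_Z)\hookrightarrow (X,D')$ inside a fixed ambient smooth pair $(X,D')$ independent of the coefficients, with $Z\subset X$ depending on the $a_{i,I}$'s in such a way that on $(X,D')$ a linearised analogue of $\Phi$ exists; the desired sections then pull back there and restrict to $Z\cong Y$. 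Passing to the resolution $\mu:\widehat{Y}_1(H)\to Y_1(H)$ of \cref{prop:ResolutionAlgo}, the pulled back sections vanish along an explicit $\mu$-exceptional effective $\mathbb{Q}$-divisor $F$ absorbing all residue obstructions. Their joint base locus on $\mathscr{Y}$ is controlled using Nakamaye's theorem to describe $\mathbf{B}_+$ geometrically, and Benoist's estimate to bound the dimension of its image back in $Y$; this is where the numerical hypotheses $\delta_i\geqslant 4n-1$ and $r>\sum_i b_i(\varepsilon_i+\delta_i)$ enter. The principal difficulty is to perform all of these steps simultaneously: produce enough sections, absorb the residue obstructions into $F$ while keeping $\mu^*\mathcal{O}_{Y_1(H)}(1)\otimes \mathcal{O}_{\widehat{Y}_1(H)}(-F)$ ample, and dispose of the spurious base locus coming from the coordinate hyperplanes inherent to the Fermat type ansatz. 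Once this is done, \eqref{eq:star} holds for $(Y_0,H_0)$, hence by openness for a general $H$.

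For the hyperbolic embedding of $U=Y\setminus H$ in $Y$, I would combine almost ampleness with Brody's criterion. If $U$ were not hyperbolically embedded, a Brody re-parametrization would yield an entire curve $f:\mathbb{C}\to Y$ whose tangential lift $f_1:\mathbb{C}\to Y_1(H)$ is contained in $\mathbf{B}_+(\mathcal{O}_{Y_1(H)}(1))$, since the logarithmic symmetric differentials produced above must vanish on $f_1(\mathbb{C})$. Almost ampleness then forces $f_1(\mathbb{C})\subset \bigcup_{\# I<n}\widetilde{D}_I$, and a straightforward induction on the dimension of the strata $D_I$, using compatibility of the residue maps with restriction, leads to a contradiction. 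Hence no such Brody limit exists and $U$ is hyperbolically embedded in $Y$.
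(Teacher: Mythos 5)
Your roadmap coincides with the paper's: reduce to verifying property~\eqref{eq:star} for one Fermat-type example, invoke the Zariski openness of~\eqref{eq:star} and \cref{implies ample}, and then derive the hyperbolic embedding. However, the heart of the argument is precisely the step you defer in your ``principal difficulty'' paragraph, and as written the proposal does not actually discharge it. The paper resolves this in the stretch \cref{construction}--\cref{thm:maintechnical}: it introduces the ambient total space \(\V=\lb_1\times_Y\cdots\times_Y\lb_n\) with the universal divisor \(D\), realises each pair \((Y,H_{\af})\) as \((Z_{\af},D_{\af})\subset(\V,D)\), constructs the modified logarithmic connections \(\widehat{\nabla}_{i,I}\) on the resolution \(\widehat{\V}_1(D)\), and needs \cref{indeterminacy} (regularity of \(\varPhi_i\) over \(\A_i^{\rm def}\)) and \cref{exceptional locus} (the preimage of \(\G_J^\infty\) avoids a general fibre) before Nakamaye's and Benoist's results can be used fibrewise in \cref{thm:maintechnical}. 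Simply asserting that the relevant ample twist exists, without establishing the nefness of the pulled-back line bundle on each stratum \(\widehat{\zs}^{\rm rel}_{1,J}\) and the exact bookkeeping of exceptional divisors \(F_i\), leaves the proof incomplete; this is exactly where the numerical hypotheses \(\delta_i\geqslant 4n-1\) and \(r>\sum b_i(\ep_i+\delta_i)\) are consumed, and your proposal only gestures at that.

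A smaller imprecision concerns the hyperbolic embedding. You say a Brody reparametrization produces \(f:\cb\to Y\) whose tangential lift lies in \(\mathbf{B}_+\big(\O_{Y_1(H)}(1)\big)\) and hence in \(\bigcup_{\#I<n}\tilde D_I\); but the \(\tilde D_I\) project into \(H\), so if \(f\) avoids \(H\) the containment already gives a contradiction with no induction needed, whereas if \(f(\cb)\subset H\) one cannot lift to \(Y_1(H)\) directly and must instead restrict to the appropriate stratum. The paper separates these cases cleanly: it first proves Brody hyperbolicity of every stratum \(D_I\setminus D(I^\complement)\) by applying the almost-ampleness statement to each pair \((D_I,D(I^\complement)|_{D_I})\), and then invokes Green's theorem~\cite{Gre77}, which is precisely the packaging of the Brody-limit dichotomy and induction you are implicitly reconstructing. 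Citing Green's theorem, and noting why the numerical hypotheses still hold after restriction to \(D_I\), would make your argument precise.
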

As a corollary of this result we obtain the following strengthening of \cref{cor:simple}.
\begin{cor}\label{main}
Let \(Y\) be a projective variety of dimension \(n\) and let \(L\) be a very ample line bundle on \(Y\). Take an integer  \(c\geqslant n\) and integers \(\delta_1,\dots,\delta_c\in \nb^*\) such that \(\delta_1,\dots,\delta_c\geqslant 4n-1\). Let \(\alpha\geqslant 3+2n(\max_{1\leqslant i\leqslant c}{\delta_i})^n\)  be a rational number such that \(\alpha \delta_i\in \nb\) for all \(i\in \{1,\dots, n\}\), and set  \((m_1,\dots,m_c)=\alpha\cdot (\delta_1,\dots, \delta_c)\in \nb^n\). For general hypersurfaces \(H_1\in|L^{m_1}|,\dots,H_c\in|L^{m_c}|\), writting \(D=\sum_{i=1}^cH_i\), the pair \((Y,D)\) has almost ample logarithmic cotangent bundle. 
\end{cor}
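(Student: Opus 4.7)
The plan is to deduce \cref{main} from \cref{maintechnical} by an induction on the number of components $c$, using \cref{cor:MoreComponents} to propagate almost ampleness from $c-1$ to $c$.

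For the base case $c = n$, given the degrees $m_i = \alpha\delta_i$, I would fit them into the form required by \cref{maintechnical} by writing
\[
m_i = \epsilon_i + (r+1)\delta_i, \qquad \epsilon_i := (\alpha - r - 1)\delta_i,
\]
for a suitable non-negative integer $r$. Since $\alpha\delta_i \in \nb$ and $r \in \nb$, the quantity $\epsilon_i$ is automatically an integer, and it lies in $\nbs$ as soon as $r < \alpha - 1$. Setting $b_i := \prod_{j\neq i}\delta_j$ and $P := \prod_{j=1}^n\delta_j$, one checks that $\epsilon_i + \delta_i = (\alpha - r)\delta_i$, whence
\[
\sum_{i=1}^n b_i(\epsilon_i + \delta_i) = nP(\alpha - r),
\]
so the condition $r > \sum_i b_i(\epsilon_i + \delta_i)$ required by \cref{maintechnical} becomes $r > nP\alpha/(1+nP)$. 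I would then look for an integer $r$ in the open interval $(nP\alpha/(1+nP),\,\alpha - 1)$, whose length equals $\alpha/(1+nP) - 1$. Since $P \leq (\max_i\delta_i)^n$, the standing hypothesis $\alpha \geq 3 + 2n(\max_i\delta_i)^n$ gives $\alpha \geq 3 + 2nP$, hence $\alpha/(1+nP) - 1 > 1$; the interval thus contains an integer, and applying \cref{maintechnical} to this data yields the almost ampleness of $\Omega_Y(\log D)$ when $c = n$.

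For the inductive step $c \geq n+1$, I would assume the statement is already established for $c-1$ general hypersurfaces. For a general $c$-tuple $(H_1,\ldots,H_c) \in \prod_i|L^{m_i}|$, every projection that forgets one factor is surjective, so for each $j$ the remaining $(c-1)$-tuple obtained by omitting $H_j$ is still general in $\prod_{k\neq j}|L^{m_k}|$. By the inductive hypothesis, each pair $(Y,D_{\hat{\jmath}})$ with $D_{\hat{\jmath}} := D - H_j$ then has almost ample logarithmic cotangent bundle; in particular this holds for $j = 1,\ldots,n+1$, and \cref{cor:MoreComponents} immediately delivers the almost ampleness of $\Omega_Y(\log D)$.

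The entire substantive content of the result sits inside \cref{maintechnical}; the present corollary is essentially a bookkeeping argument that combines that theorem with the general-to-special propagation provided by \cref{cor:MoreComponents}. The only potentially delicate step is the numerical check that the interval for $r$ is nonempty and contains an integer, and this is precisely what the explicit lower bound imposed on $\alpha$ is designed to ensure.
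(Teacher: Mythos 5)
Your proof is correct and follows essentially the same strategy as the paper: reduce to the case \(c=n\) via \cref{cor:MoreComponents}, then reparametrize \(m_i=\alpha\delta_i\) as \(m_i=\ep_i+(r+1)\delta_i\) and verify the numerical hypotheses of \cref{maintechnical}. The only cosmetic difference is that the paper writes down an explicit choice — namely \(r=\lceil\alpha\rceil-2\) and \(\ep_i=(\alpha-\lceil\alpha\rceil+1)\delta_i\in\{1,\dots,\delta_i\}\) — and checks directly that it works for any \(r\geqslant r_0:=1+2n(\max_i\delta_i)^n\), whereas you argue for the existence of a suitable integer \(r\) by showing the admissible interval \(\bigl(nP\alpha/(1+nP),\,\alpha-1\bigr)\) has length greater than one; both computations rest on the same bound \(P\leqslant(\max_i\delta_i)^n\) and the same lower bound on \(\alpha\).
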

\begin{proof}[\cref{maintechnical} $\Rightarrow$ \cref{main} and  \cref{cor:simple}]
Let us first fix  
\[r_0:=1+2n\max_{1\leqslant i \leqslant c}(\delta_i)^n.\]
In view of  \cref{cor:MoreComponents} we are reduced to prove that under the hypothesis of \cref{main}, for every divisor \(D'\) consisting of \(n\) of the components of \(D\), the logarithmic cotangent bundle of \((Y,D')\) is almost ample. Without loss of generality, we may assume \(D'=\sum_{i=1}^nH_i\).

Our hypothesis on \(r_0\) guaranties that \(r_0>\sum_{i=1}^n2b_i\delta_i\), and therefore one can apply \cref{maintechnical} for any \(r\geqslant r_0\) and for any \((\ep_1,\dots,\ep_n)\) such that \(1\leqslant \ep_i\leqslant \delta_i\) for all \(i\in\{1,\dots, n\}\). 

For any rational number \(\alpha\geqslant r_0+2\), and any \((m_1,\dots,m_n)\in \nb^n\) of the form \(\alpha(\delta_1,\dots, \delta_n)\), one can set \(r:=\lceil\alpha\rceil-2\geqslant r_0\) and  for any \(i\in\{1,\dots, n\}\) one can set 
\(\ep_i=(\alpha-\lceil\alpha\rceil+1)\delta_i\in\{1,\dots, \delta_i\}\), so that \(m_i=\ep_i+(r+1)\delta_i\). Applying \cref{maintechnical} implies that the logarithmic cotangent bundle of \((Y,D')\) is almost ample, which concludes the proof of \Cref{main}.

To prove  \cref{cor:simple}, it suffices to take \(\delta_1,\dots,\delta_c=4n-1\).  The bound for \(\alpha\) is \(\alpha\geqslant 3+2n(4n-1)^n\). And we can therefore apply \Cref{main} to obtain the almost ampleness of the logarithmic cotangent bundle for degree  \[m=m_1=\dots= m_c\geqslant (4n-1)(3+2n(4n-1)^n).\] It then remains to observe that \((4n-1)(3+2n(4n-1)^n)\leqslant (4n)^{n+2}\).
\end{proof}

\subsection{Notation and conventions}
Let us summarize here the main notation and conventions we will use in \cref{se:Main}. 
We  fix an integer \(n\geqslant 2\) and take homogenous coordinates \([x_0,\dots, x_n]\) on \(\P^n\). Given any \(\delta\in \nb^*\), these homogenous coordinates induce an isomorphism
\[H^0(\P^n,\O_{\P^n}(\delta))=\cb[x_0,\dots, x_n]_{\delta},\]
where the right hand side denotes the set of homogenous polynomials of degree \(\delta\) in \(n+1\) variables.  The set of unitary monomials of degree \(\delta\) forms a basis of this space, and this basis  is naturally in bijection with the  set of multi-indices of weight \(\delta\)
 \[\ib(\delta):=\{I=(i_0,\dots, i_n)\in \nb^{n+1}\ | \ |I|=i_0+\cdots+i_n=\delta\}.\]
For any \(I=(i_0,\dots,i_n)\in \ib(\delta)\), we use the standard multi-index notation \(x^I:=x_0^{i_0}\cdots x_n^{i_n}\). We therefore have the following identification 
\begin{eqnarray*}
\cb^{\ib(\delta)}&\stackrel{\sim}{\to}& H^0(\P^n,\O_{\P^n}(\delta))\\
(a_I)_{I\in \ib(\delta)}&\mapsto& \sum_{|I|=\delta}a_Ix^I.
\end{eqnarray*}
This  induces an identification \(P(\cb^{\ib(\delta)})\cong |\O_{\P^n}(\delta)|\). We will implicitly use these identifications in what follows.

Given  a subset \(J\subset \{0,\dots,n\}\) of cardinality \(\# J=n-k\), we consider the \(k\)-dimensional projective subspace \(\P_J\subset \P^n\) defined by \[\P_J=\{[x]\in \P^n\ |\ x_j=0\ \forall j\in J\}.\]
The coordinates \([x_0,\dots, x_n]\) induce homogenous coordinates \([x_{\ell_0},\dots, x_{\ell_k}]\) on \(\P_J\), where \(\{\ell_0,\dots, \ell_k\}=\{0,\dots, n\}\setminus J\). Moreover, we have a restriction map
\[\res_J^{\delta}:H^0(\P^n,\O_{\P^n}(\delta))\to H^0(\P_J,\O_{\P_J}(\delta)),\]
which, with our choice of homogenous coordinates, is the map given by
\[\res^{\delta_i}_J\Big(\sum_{|I|=\delta_i}a_Ix^I\Big)=\sum_{\substack{|I|=\delta_i\\ \Supp(I)\cap J=\varnothing}}a_Ix^I.\]
where \(\Supp(I)=\{j\in \{0,\dots,n\}\ |\ i_j\neq 0\}\).
Writing
\[\ib_J(\delta):=\{I\in \nb^{n+1}\ | \ |I|=\delta\ \ \text{and}\ \ \Supp(I)\cap J=\varnothing\},\]
we can therefore identify the restriction map \(\res_J^{\delta}\) with the natural projection \(\cb^{\ib(\delta)}\to \cb^{\ib_J(\delta)}\).

\subsection{Setting} \label{construction}

Following the ideas of \cite{BD15}, we will construct $n$ families of divisors parametrized by certain Fermat-type equations. Let $L$ be a very ample line bundle over $Y$.  Fix $n+1$ sections in general position $\tau_0,\ldots,\tau_n\in H^0(Y,L)$. By ``general position" we mean that the divisor $\sum_{i=0}^n(\tau_i=0)$ is simple normal crossing. We fix  two $n$-tuples of positive integers $\bm{\varepsilon}=(\varepsilon_1,\ldots,\varepsilon_n)$ and $\bm{\delta}=(\delta_1,\ldots,\delta_n)$. For   any $i\in\{1,\ldots,n\}$ we define  
\[\A_i:=\bigoplus_{I\in \ib(\delta_i)}H^0(Y,L^{\varepsilon_i})\cong H^0(Y,L^{\ep_i})\otimes H^0(\P^N,\O_{\P^N}(\delta_i)).\]
%The last isomorphism is here understood as being the natural identification
%\[(a_I)_{I\in \ib_i}\mapsto \sum_{|I|=\delta_i}a_Ix^I,\]
%where \([z_0:\dots,z_N]\) are homogenous coordinates on \(\P^N\) and where we used the multiindex notation \(z^I=z_0^{i_0}\cdots z_N^{i_N}\) for \(I=(i_0,\dots,i_N)\).
Let us also fix a positive integer $r$ and consider for any \(\af_i=(a_{i,I})_{I\in \ib(\delta_i)}\) the hypersurface  $H_{\af_i}$ in $Y$   defined by the zero locus of the  section 
 \begin{eqnarray}\nonumber
 \sigma_i(\af_i):=\sum_{|I|=\delta_i}a_{i,I}\tau^{(r+1)I}\in H^0(Y,L^{m_i})
 \end{eqnarray}
 where $m_i=\varepsilon_i+(r+1)\delta_i$ and $\tau^{(r+1)I}:=(\tau_0^{i_0}\cdots\tau_n^{i_n})^{r+1}$ for \(I=(i_0,\dots,i_n)\). 
Define
 \[\A:=\A_1\times\cdots\times \A_n,\]
 and for any \(\af=(\af_1,\dots,\af_n)\in \A\), define \(H_{\af}:=\sum_{i=1}^cH_{\af_i}\).  Let us now observe that this way we obtain indeed smooth pairs. 
   \begin{lem}\label{lem:Bertini}
  There exists a  non-empty Zariski open subset  \(\A^{\sm} \subset \A \) such that for any \(\af\in \A^{\rm sm}\), the pair \((Y,H_{\af})\) is a smooth pair.
  \end{lem}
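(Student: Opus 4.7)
The plan is to prove this via a standard Bertini-type smoothness argument applied to the linear subsystems $V_i \subset H^0(Y, L^{m_i})$ defined as the images of the linear maps $\A_i \to H^0(Y, L^{m_i})$, $\af_i \mapsto \sigma_i(\af_i)$. The essential technical point to check is that, although each $V_i$ is a rather specific ``Fermat-type'' sub-linear system of $|L^{m_i}|$, it is nevertheless base-point-free on $Y$; once this is granted, generic smoothness in characteristic zero yields the smooth pair conclusion in a routine way.

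The crucial step --- and the only one requiring any real input from the setup --- is to verify this base-point-freeness. Fix $y \in Y$. Since $\sum_{j=0}^{n}(\tau_j = 0)$ is simple normal crossing on the $n$-dimensional variety $Y$, at most $n$ of the $n+1$ sections $\tau_0, \ldots, \tau_n$ can vanish at $y$, so there exists $j_0$ with $\tau_{j_0}(y) \neq 0$. For the multi-index $I_0 := \delta_i \cdot e_{j_0} \in \ib(\delta_i)$, the monomial $\tau^{(r+1)I_0} = \tau_{j_0}^{(r+1)\delta_i}$ does not vanish at $y$. Combining this with a section $a \in H^0(Y, L^{\ep_i})$ with $a(y) \neq 0$ (which exists by the very ampleness of $L^{\ep_i}$), the section $\sigma_i(\af_i)$ obtained from $a_{i, I_0} = a$ and $a_{i, I} = 0$ for $I \neq I_0$ does not vanish at $y$. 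Hence $V_i$ is base-point-free on $Y$.

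Given base-point-freeness, for each non-empty subset $S \subset \{1, \ldots, n\}$ I would consider the universal incidence scheme
\[
\mathcal{Z}_S := \{(\af, y) \in \A \times Y : \sigma_i(\af_i)(y) = 0 \text{ for all } i \in S\}.
\]
The projection $\mathcal{Z}_S \to Y$ exhibits $\mathcal{Z}_S$ as a vector subbundle of $\A \times Y$ of corank $|S|$ (each equation $\sigma_i(\af_i)(y) = 0$ cuts out an affine hyperplane in the corresponding factor $\A_i$ by base-point-freeness, and these conditions are independent because they involve disjoint factors of $\A$). In particular $\mathcal{Z}_S$ is smooth. Applying generic smoothness in characteristic zero to the other projection $\mathcal{Z}_S \to \A$, one obtains a non-empty Zariski open subset $\A_S^{\sm} \subset \A$ such that, for every $\af \in \A_S^{\sm}$, the fiber $\bigcap_{i \in S} H_{\af_i}$ is either empty or smooth of pure codimension $|S|$ in $Y$ (if the projection fails to be dominant, the complement of its closure provides the required open, and the empty fiber is harmless).

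Setting $\A^{\sm} := \bigcap_{\emptyset \neq S \subset \{1, \ldots, n\}} \A_S^{\sm}$ yields a non-empty Zariski open subset of $\A$ as a finite intersection of non-empty opens. For any $\af \in \A^{\sm}$, all sub-collections of the hypersurfaces $H_{\af_1}, \ldots, H_{\af_n}$ meet transversally in $Y$, which is exactly the simple normal crossing condition for $H_\af = \sum_{i=1}^n H_{\af_i}$, so $(Y, H_\af)$ is a smooth pair. I do not expect any serious obstacle: the entire argument reduces to the base-point-freeness check in the second paragraph, which is a direct consequence of the general position of $\tau_0, \ldots, \tau_n$.
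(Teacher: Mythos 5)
Your proof is correct, and it takes a somewhat different route from the one in the paper. The paper's proof quotes a Bertini-type result from \cite{BD15} (Lemma 2.1) --- which says that for a general $\af_i$ the intersection of $H_{\af_i}$ with any fixed smooth subvariety $W \subset Y$ is smooth --- and then proceeds by induction on the number of components, adding one hypersurface at a time and imposing transversality with the configuration already constructed. You instead make the argument self-contained: you isolate the base-point-freeness of the Fermat-type sub-linear systems $V_i \subset |L^{m_i}|$ (which is precisely what makes the Bertini machinery work in this situation, and is presumably also the key input in \cite{BD15}, Lemma 2.1), and then run the universal-incidence-variety argument once for every non-empty $S \subset \{1,\ldots,n\}$ simultaneously. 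Both routes are at heart Bertini arguments in characteristic zero; the paper's version is shorter by delegating to the cited lemma and inducting, while yours is more explicit and handles all the transversality conditions in a single pass. One small remark: the linear conditions $\sigma_i(\af_i)(y)=0$ cut out \emph{linear} (not merely affine) hyperplanes in $\A_i$, since $\af_i \mapsto \sigma_i(\af_i)(y)$ is linear and vanishes at $0$; this doesn't affect the argument, which needs only the codimension count and smoothness of $\mathcal{Z}_S$ as a vector subbundle.
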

  \begin{proof} In \cite{BD15} (Lemma 2.1) was proven the following Bertini type result: Given any smooth subvariety \(W\subset Y\) and any \(i\in \{1,\dots,n\}\), then for a general \(\af_i\in \A_i\), the intersection \(W\cap H_{\af_i}\) is smooth. From there it suffices to make an induction on the number of components since this result ensures that at each step one can chose the hypersurface \(H_{\af_{i+1}}\) to be transverse to the configuration given by the first \(i\) constructed hypersurfaces \(H_{\af_1},\dots,H_{\af_i}\).
  \end{proof}
For any \(i\in \{1,\dots, n\}\), let us define $L_i:=L^{m_i}$ and let us denote by \(\lb_i\) the total space of \(L_i\). Moreover, let us denote by $\V$ the total space of the rank $n$ vector bundle $L_1 \oplus L_2\oplus\ldots\oplus L_n$ and let \(p_{\V}:\V\to Y\) be the canonical projection. For any \(i\in \{1,\dots, n\}\), denote by \(T_i\in H^0(\V,\p_{\V}^*L_i)\) the tautological section defined by 
 \[T_i(\ell_1,\dots,\ell_\c):=\ell_i\ \ \ \forall\ (\ell_1,\dots,\ell_\c)\in \V=\lb_1\times_Y\cdots\times_Y\lb_\c.\] 
 These induce a  simple normal crossing divisor \(\D=\sum_{i=1}^\c D_i\) on \(\V\) where for each \(i\in\{1,\dots,\c\}\)  we write \(D_i=(T_i=0)\).
 For any \(\af_i\in \A_i\), consider the section
 \[\Si_i(\af_i):=T_i-p_{\V}^*\sigma_i(\af_i)\in H^0(\V,p_{\V}^*L_i).\]
 Denote also by $E_{\af_i}=(\Si_i(\af_i)=0)\subset \V$   the hypersurface defined  by \(\Si_i(\af_i)\), and for  $\af=(\af_1,\dots,\af_\c)\in \A$ denote  \[Z_\af:=\bigcap_{i=1}^{\c}E_{\af_i}\subset \V.\]
If $H_{\af}$ is simple normal crossing, then so is $\sum_{i=1}^{\c}E_{\af_i}$, and therefore
   $Z_\af$ is a smooth complete intersection subvariety of $\V$. 
   For any \(i\in \{1,\dots, n\}\) set \(\D_{i,\af}:= D_i|_{Z_{\af}}\) and $\D_{\af}:=\sum_{i=1}^{\c}\D_{i,\af}$. By construction, the restriction of  $\p_{\V}$ to $Z_{\af}$
   $$\p_{\V}|_{Z_{\af}}:(Z_{\af}, D_{\af})\rightarrow (Y,H_{\af})$$
   is a  biholomorphism of  log manifolds.  
   
We will consider these varieties in families. Let us denote by \(\pr_1:\A\times \V\to \A\) and \(\pr_2:\A\times \V\to \V\) the two canonical projections, and consider for each \(i\in \{1,\dots, n\}\), the section \(\Si_i\in H^0(\A\times\V,\pr_2^*\p_{\V}^*L_i)\) defined by \(\Si_i(\af,v):=\Si_i(\af_i)(v)\) for any \(\af:=(\af_1,\dots,\af_c)\in \A\) and any \(v\in \V\).
   We then consider the family of complete intersections \(Z_{\af}\) as above, namely the  subvariety ${\zs}\subset \A\times \V$  defined by 
  \begin{eqnarray*}
  {\zs}:&=&\{(\af,v)\in \A\times \V\mid  \Si_1(\af_1,v) =\ldots= \Si_c(\af_c,v)=0\}.
  \end{eqnarray*}
On \(\zs\) we consider the divisor \(\ds:=\pr_2^*D\) which satisfies \(\ds|_{Z_{\af}}=D_{\af}\) for any \(\af\in \A^{\rm sm}\). We will denote by \[\zs_1^{\rm rel}(\ds):=\P(\Omega_{\zs/\A^{\rm sm}}(\log \ds)),\]
the projectivisation of the relative logarithmic cotangent bundle of \((\zs,\ds)\to \A^{\rm sm}\), the family \((\zs,\ds)\) restricted to \(\A^{\rm sm}\) and we will denote by 
\[\widehat{\zs}_1^{\rm rel}(\ds):=\widehat{\P}(\Omega_{\zs/\A^{\rm sm}}(\log \ds)),\]
the associated resolution constructed in \cref{prop:ResolutionAlgo}. This proposition moreover allows us to identify , for any \(\af\in \A^{\sm}\),  \(\widehat{Z}_{\af,1}(D_\af)\) with  the fiber in \(\widehat{\zs}_1^{\rm rel}(\ds)\) above \(\af\).

%Let us denote by \(\overline{\V}_1:=\P(\Omega_{\V}(\log D))\) the projectivisation of the logarithmic cotangent bundle of \((\V,D)\) and  by $\overline{\zs}_1^{\rm rel}$ the projectivized relative cotangent bundle $\pb\big(\Omega_{\zs/S}(\log \ds)\big)\subset \A\times \overline{\V}_1 $. For any \(\af\in \A^{\sm}\), the  fiber $\overline{Z}_{1,\af}$ of $\overline{\zs}_1^{\rm rel}\rightarrow \A^\sm$ is the projectivized bundle $\pb\big(\Omega_{Z_{\af}}(\log D_\af)\big)$. Denote by $\widehat{\zs}_1^{\rm rel}$ be the strict transform of $\overline{\zs}_1^{\rm rel}$ under the bimeromorphic morphism $\widehat{X}_1\times S\rightarrow \overline{X}_1\times \A^{\sm}$. Then, from \Cref{behaves} one deduces:
% \begin{lem}\label{family}
% 	For every \(\af\in \A^\sm\), the fiber  $\widehat{Z}_{1,\af}$ of  $\widehat{\zs}_1^{\rm rel}$, $\mu_{\af}:\widehat{Z}_{1,\af}\rightarrow \overline{Z}_{1,\af}$ is also the minimal resolution of $\overline{Z}_{1,\af}$.
% \end{lem}
%Denote by $\widehat{X}_1^\circ:={\rm pr}_1 (\widehat{\zs}_1^{\rm rel})$, where ${\rm pr}_1 :\widehat{X}_1\times S \rightarrow \widehat{X}_1$ is the natural projection map.

 \subsection{Modified logarithmic connections}\label{sse:ModifiedConnection}  In this section, let us fix \( i\in \{1,\dots, n\}\).  Recall that we defined \(\lb_i\) to be the total space of the line bundle \(L_i=L^{m_i}\). Let us denote by \(\p_i:\lb_i\to Y\) the canonical projection
and by \(T_{\lb_i}\in H^0(\lb_i,\p_i^*L_i)\) the tautological section defined by \(T_{\lb_i}(\ell_i):=\ell_i\) for any \(\ell_i\in \lb_i\). Note that one can identify \(Y\) with the zero locus of \(T_{\lb_i}\),  \(Y=(T_{\lb_i}=0)\). We will use this identification and study the log pair \((\lb_i,Y)\). For shortness, denote by  \(\pi_i:\lb_{i,1}(Y)\to \lb_i\)  the canonical projection. Observe that under the canonical projection \(q_i:\V\to \lb_i\), one has \(q_i^*T_{\lb_i}=T_i\) and \(q_i^*Y=D_i\). Therefore we get a morphism of log pairs \(q_i:(\V,D_i)\to (\lb_i,Y)\).
 Let us denote  by \[\nabla_i:\p_i^*L_i\to \p_i^*L_i\otimes \Omega_{\lb_i}(\log Y)\] the logarithmic connexion associated to the section \(T_{\lb_i}\) as defined in \eqref{meromorphic}. By \eqref{eq:TautologicalVanishing} one has \(\nabla_i(T_{\lb_i})=0\).
 % We introduce a new logarithmic connection on the log manifold $(\mathbb{L}_i,D_{\lb_i})$ in the following way. 
 \begin{lem}
 	For any $I\in \ib_i$, there exists a $\cb$-linear map
 \begin{eqnarray}\label{connection}
  	\nabla_{i,I}:H^0(Y,L^{\varepsilon_i})&\rightarrow& H^0\big(\mathbb{L}_i,\Omega_{\mathbb{L}_i}(\log Y)\otimes p^*_i L^{\varepsilon_i+\delta_i}\big)\\\nonumber
  	 a&\mapsto& \nabla_{i,I}(a)
 \end{eqnarray}
 	such that 
 	$$
 		p_i^*(\tau^{rI})\nabla_{i,I}(a)=\nabla_i (p_i^*a \cdot p_i^*\tau^{(r+1)I}).
 	$$
 \end{lem}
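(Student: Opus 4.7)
The plan is to define $\nabla_{i,I}(a)$ as the unique preimage of $\nabla_i(p_i^* a \cdot p_i^* \tau^{(r+1)I})$ under multiplication by $p_i^* \tau^{rI}$, and then verify the divisibility and regularity required for this preimage to exist as a genuine global section. Concretely, I start with the section
\[
\omega(a) \;:=\; \nabla_i\bigl(p_i^*(a) \cdot p_i^*(\tau^{(r+1)I})\bigr) \;\in\; H^0\bigl(\mathbb{L}_i,\, p_i^*L_i \otimes \Omega_{\mathbb{L}_i}(\log Y)\bigr),
\]
which makes sense because $p_i^*(a) \cdot p_i^*(\tau^{(r+1)I})$ is a global section of $p_i^*L_i$ (note $\varepsilon_i + (r+1)\delta_i = m_i$). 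Since $p_i^* \tau^{rI}$ is a nonzero section of the line bundle $p_i^*L^{r\delta_i}$ on the irreducible variety $\mathbb{L}_i$, multiplication by it is injective on global sections of any torsion-free sheaf, so a preimage, if it exists, is unique and $\mathbb{C}$-linear in $a$.

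The content of the lemma is therefore the divisibility statement: $\omega(a)$ lies in the image of multiplication by $p_i^*\tau^{rI}$. This can be checked locally. Over a trivializing open $U \subset Y$ for $L$, with the induced trivialization of $p_i^* L_i$ on $p_i^{-1}(U) \cong U \times \mathbb{C}$ (fibre coordinate $t_U$, so that $T_{\mathbb{L}_i}$ corresponds to $t_U$), the connection reads $\nabla_i(f) = df - f\, dt_U/t_U$ applied to the local function $f = p_i^*(a_U \tau_U^{(r+1)I})$. Using Leibniz together with the chain rule $d(\tau_U^{(r+1)I}) = d((\tau_U^I)^{r+1}) = (r+1)\tau_U^{rI}\, d(\tau_U^I)$, I can factor
\[
\omega(a)\bigl|_U \;=\; p_i^*(\tau_U^{rI}) \cdot \Bigl[\,p_i^*(\tau_U^{I})\, d(p_i^*a_U) + (r+1)\, p_i^*a_U \cdot d(p_i^*\tau_U^{I}) - p_i^*(a_U \tau_U^{I}) \frac{dt_U}{t_U}\,\Bigr].
\]
The bracketed expression is a local regular section of $\Omega_{\mathbb{L}_i}(\log Y)$ tensored with the local trivialization of $p_i^*L^{\varepsilon_i + \delta_i}$. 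By uniqueness of the quotient, these local bracketed expressions patch together to a globally defined section of $\Omega_{\mathbb{L}_i}(\log Y) \otimes p_i^* L^{\varepsilon_i + \delta_i}$, which I declare to be $\nabla_{i,I}(a)$.

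The defining identity $p_i^*(\tau^{rI})\, \nabla_{i,I}(a) = \nabla_i(p_i^* a \cdot p_i^* \tau^{(r+1)I})$ holds by construction, and $\mathbb{C}$-linearity of $a \mapsto \nabla_{i,I}(a)$ follows from the $\mathbb{C}$-linearity of $\nabla_i$ together with the uniqueness of the preimage. The only mildly delicate step is the local factorization, but this is essentially immediate from the chain rule $d(f^{r+1}) = (r+1)f^r df$ applied to $f = \tau^I$, which is precisely where the Fermat-type shape $\tau^{(r+1)I}$ of the defining equations pays off.
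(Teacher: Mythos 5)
Your proposal is correct and follows essentially the same route as the paper: a local Leibniz/chain-rule computation showing that $p_i^*\tau_U^{rI}$ factors out of $\nabla_i(p_i^*a\cdot p_i^*\tau^{(r+1)I})$ over any trivializing open, together with the observation that the quotients patch to a global section of $\Omega_{\mathbb{L}_i}(\log Y)\otimes p_i^*L^{\varepsilon_i+\delta_i}$. The only cosmetic difference is that you make the uniqueness/injectivity step (multiplication by the nonzero section $p_i^*\tau^{rI}$ on a torsion-free sheaf over an irreducible variety) a bit more explicit than the paper does.
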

 \begin{proof}
One just needs to verify that \(\nabla_i (p_i^*a \cdot p_i^*\tau^{(r+1)I})\) is divisible by  \(p_i^*(\tau^{rI})\). This can be done locally: over an open subset \(U\) with a fixed choice of trivialization for \(L|_U\), denoting by \(a_U, {\tau}^I_U\in \O(U)\) the holomorphic function associated to \(a, \tau^I\) under this choice of trivialization, and denoting by \(t_i\in \O(p_i^{-1}(U))\) the holomorphic function associated to \(T_{\lb_i}\), one has
\[\nabla_i (p_i^*a \cdot p_i^*\tau^{(r+1)I})\stackrel{\rm loc}{=}p_i^*\tau_U^{rI} \big((r+1)p_i^*a_Ud(p_i^*\tau_U^I)+p_i^*\tau_U^I(d(p_i^*a_U)-p_i^*a_U\frac{dt_i}{t_i})\big)=p_i^*\tau_U^{rI} \cdot \nabla_{i,I}(a)_U.\]
 Since  \(\nabla_i (p_i^*a \cdot p_i^*\tau^{(r+1)I})\in  H^0\big(\mathbb{L}_i,\Omega_{\mathbb{L}_i}(\log Y)\otimes p^*_i L^{\varepsilon_i+(r+1)\delta_i}\big)\) and \(\tau^{rI}\in H^0(Y,L^{r\delta_i})\), we see that the elements \(\nabla_{i,I}(a)_U\) defined this way induces the desired global twisted logarithmic differential form \(\nabla_{i,I}(a)\).
  \end{proof}
 This allows us  to define the rational map
\begin{eqnarray}\label{rational map}
\varPhi_i:\A_i\times {\lb}_{i,1}(Y)&\dashrightarrow& |\O_{\P^n}(\delta_i)|\\ \nonumber
(\af_i,[\xi])&\mapsto&  \Big[\sum_{I\in\ib(\delta_i)}\nabla_{i,I}(a_{i,I})(\xi)x^I\Big].
\end{eqnarray}
This is well defined since it is independent of the choice of \(\xi\) representing \([\xi]\). We will now study the indeterminacy locus of this map.

% Observe that one has the explicite expression
% \[\nabla_{i,I}(a)\stackrel{\rm loc}{=}(r+1)\]
 Take an open set $U\subset Y$ with local coordinates $(z_1,\ldots,z_n)$ and a fixed trivialization of  $L|_U“$. Then there are  induced coordinates $(t_i,z_1,\ldots,z_n)$ on the open set $\p_i^{-1}(U)$ in $\lbb_i$ with $Y\cap p^{-1}_i(U)=(t_i=0)$. For any \(\ell\in p^{-1}_i(U)\) and any non-zero $\xi \in T_{\lb_i}(-\log D_{\lb_i})_{\ell}$, consider the evaluation map
 \begin{eqnarray}
 	 \nabla^{\xi}_{i}:\A_i&\rightarrow& H^0(\P^n,\O_{\P^n}(\delta_i)).\nonumber\\
 	 \af_{i}&\mapsto& \sum_{|I|=\delta_i}\nabla^{\xi}_{i,I}(a_{i,I})x^I,	 \label{eq:Nabla_i}
 \end{eqnarray}
where \(\af_i=(a_{i,I})_{I\in \ib(\delta_i)}\) and where \(\nabla^{\xi}_{i,I}(a_{i,I})\) corresponds to the local representation of  \(\nabla_{i,I}(a_{i,I})(\xi)\) obtained from our choice of trivialization. Namely, for any \(a\in H^0(Y,L^{\ep_i}),\) if we denote by \(a_U\in \O(U)\) the corresponding holomorphic function under our choice of trivialization, then  
\begin{eqnarray}
\nabla_{i,I}^{\xi}(a)&:=& \big((r+1)p_i^*a_Ud(p_i^*\tau_U^I)+p_i^*\tau_U^I(d(p_i^*a_U)-p_i^*a_U\frac{dt_i}{t_i})\big)(\xi)\label{eq:NablaLocal}\\
&=& \big((r+1)a_Ud\tau_U^{I}+\tau_U^Ida_U\big)(dp_i(\xi))-p_i^*(\tau_U^Ia_U)\frac{dt_i}{t_i}(\xi)\nonumber.
\end{eqnarray}
Observe that \(\nabla_{i,I}^{\xi}:H^0(Y,L^{\ep_i})\to \cb\) is linear for each \(I\).

We are now going to estimate the rank of \(\nabla_i^{\xi}\). Let us first introduce a natural stratification on \(Y\). Given any \(J\subset \{0,\dots, n\}\), define \[Y_J:=\left\{y\in Y\ |\ \tau_j(y)=0\Leftrightarrow j\in J\right\}.\]
Observe that since \(\tau_0,\dots,\tau_n\) are supposed to be in general position, each \(Y_J\) is smooth of dimension  \(n-\# J\). Moreover, the family \((Y_J)_J\subset\{0,\dots, n\}\) is a stratification of \(Y\). 

 \begin{lem}\label{rank} Same notation as above. Take \(\ell\in \lb_i\) and a non-zero \(\xi\in T_{\lb_i}(-\log Y)_\ell\). Set \(y=p_i(\ell)\) and let \(J\subset \{0,\dots, n\}\) be such that  \(y\in Y_J\). Set \(k=n-\# J\). Then
 	$$
 	\rank\nabla_i^{\xi}\geqslant \rank \big(\res^{\delta_i}_J\circ \nabla_i^{\xi}\big)\geqslant   \binom{k+\delta_i}{k}.
 	$$
	
 \end{lem}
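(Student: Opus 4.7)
The first inequality is automatic, since the rank of a composition is bounded by the rank of either factor, so the content is the lower bound on $\rank\bigl(\res_J^{\delta_i} \circ \nabla_i^{\xi}\bigr)$. The plan is to reduce this to a per-monomial non-vanishing statement via the direct-sum structure of $\nabla_i^{\xi}$, and then to check the non-vanishing by an explicit local computation in which the Fermat-type exponent $(r+1)I$ plays the decisive role.

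First I would observe that $\nabla_i^{\xi}$ decomposes as an external direct sum of linear forms $\nabla_{i,I}^{\xi} : H^0(Y, L^{\ep_i}) \to \cb$ indexed by $I \in \ib(\delta_i)$, because the $I$-th coordinate $a_{i,I}$ of $\af_i$ contributes only to the coefficient of the linearly independent monomial $x^I$ in $\nabla_i^{\xi}(\af_i)$. Identifying $\res_J^{\delta_i}$ with the projection $\cb^{\ib(\delta_i)} \twoheadrightarrow \cb^{\ib_J(\delta_i)}$, this yields
\[
\rank\bigl(\res_J^{\delta_i} \circ \nabla_i^{\xi}\bigr) = \#\bigl\{I \in \ib_J(\delta_i) : \nabla_{i,I}^{\xi} \not\equiv 0\bigr\},
\]
so it suffices to show non-vanishing of $\nabla_{i,I}^{\xi}$ for every $I \in \ib_J(\delta_i)$; summing then yields $\#\ib_J(\delta_i) = \binom{k+\delta_i}{k}$.

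For the non-vanishing, I would decompose $\xi = \alpha (t_i\partial_{t_i})_\ell + \sum_j \beta_j (\partial_{z_j})_\ell$ in the canonical basis of $T_{\lb_i}(-\log Y)_\ell$, set $v := dp_i(\xi) = \sum_j \beta_j \partial_{z_j}|_y \in T_y Y$ and $\alpha := (dt_i/t_i)(\xi) \in \cb$, and rewrite \eqref{eq:NablaLocal} as
\[
\nabla_{i,I}^{\xi}(a) = \tau^I(y)\cdot v(a) + a(y)\bigl((r+1)\, v(\tau^I) - \alpha\, \tau^I(y)\bigr).
\]
The key point is that $I \in \ib_J(\delta_i)$ forces $\Supp(I) \cap J = \varnothing$, so $\tau^I = \prod_{j\notin J}\tau_j^{i_j}$ is a product of sections that do not vanish at the stratum $Y_J$; hence $\tau^I(y) \neq 0$.

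To conclude I would invoke very ampleness of $L^{\ep_i}$, which ensures surjectivity of the $1$-jet map $H^0(Y, L^{\ep_i}) \to J^1_y L^{\ep_i}$. Since $\xi \neq 0$, either $v \neq 0$, in which case I would pick $a$ with $a(y) = 0$ and $v(a) \neq 0$, giving $\nabla_{i,I}^{\xi}(a) = \tau^I(y)\, v(a) \neq 0$; or $v = 0$ but $\alpha \neq 0$, in which case any $a$ with $a(y) \neq 0$ satisfies $\nabla_{i,I}^{\xi}(a) = -\alpha\, \tau^I(y)\, a(y) \neq 0$. The only real subtlety is organizational --- tracking $v$, $\alpha$, the support condition on $I$, and the stratum $Y_J$ in parallel --- rather than any deep analytic point, since the Fermat ansatz was engineered precisely so that $\tau^I(y)\neq 0$ for every $I\in\ib_J(\delta_i)$.
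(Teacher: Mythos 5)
Your proof is correct and follows essentially the same route as the paper's: reduce to the block-diagonal structure of $\res^{\delta_i}_J\circ\nabla_i^{\xi}=(\nabla_{i,I}^{\xi})_{I\in\ib_J(\delta_i)}$, observe that $\Supp(I)\cap J=\varnothing$ forces $\tau^I(y)\neq 0$, and then split into the cases $dp_i(\xi)\neq 0$ (choose $a$ vanishing at $y$ with nonzero derivative along $dp_i(\xi)$) and $dp_i(\xi)=0$ (then $\xi$ is a nonzero multiple of $t_i\partial_{t_i}$, so choose $a$ nonvanishing at $y$), invoking very ampleness of $L$ in both cases. The only cosmetic difference is that you name the two coefficients $v$ and $\alpha$ explicitly and record the rank as the exact count of nonvanishing $\nabla_{i,I}^{\xi}$, whereas the paper phrases the same thing via "diagonal by blocks" and $\rank(\nabla_{i,I}^{\xi})=1$; the underlying argument is identical.
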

\begin{proof} The first inequality being obvious, we only prove the second one.
Observe that the linear map 
\begin{eqnarray}\label{restrict}
\res^{\delta_i}_J\circ \nabla_i^{\xi}=(\nabla_{i,I}^{\xi})_{I\in \ib_J(\delta_i)}
\end{eqnarray}
 is diagonal by blocs. 
Therefore we just have to study the rank of \(\nabla_{i,I}^{\xi}\) for each \(I\) such that \(|I|=\delta_i\) and \(\Supp(I)\cap J=\varnothing\). 
There are precisely \(\binom{k+\delta_i}{k}\) such multi-indices \(I\), and therefore it suffices to prove that for each of these, \(\rank(\nabla_{i,I}^{\xi})=1\) or equivalently, \(\nabla_{i,I}^{\xi}\neq 0\). 

Fix \(I\) such that \(\tau^I(y)\neq 0\). If \(dp_i(\xi)=0\) then there exists \(\lambda\in \cb^*\) such that $\xi=\lambda\cdot t_i\frac{\d }{\d t_i}$.  Since $L$ is very ample and \(\ep_i\geqslant 1\), there exists   $a\in H^0(Y,L^{\varepsilon_i})$ such that $a(y)\neq 0$. Thus (with  \(a_U\in \O(U)\) as above)
\begin{eqnarray*}
\nabla_{i,I}^{\xi}(a)&=&\big((r+1)a_Ud\tau_U^{I}+\tau_U^Ida_U\big)(dp_i(\xi))-(\tau_U^Ia_U)(y)\frac{dt_i}{t_i}(\xi)=-\lambda a_U(y) {\tau}^{I}(y)\neq 0.
\end{eqnarray*}
If $dp_i(\xi)\neq 0$, it also follows from the very ampleness of  $L$ that there exists   $a\in H^0(Y,L^{\varepsilon_i})$ such that  
$$a_U(y)=0\quad \mbox{and} \quad da_U\big(dp_i(\xi)\big) \neq 0.$$
Therefore \(\nabla_{i,I}^{\xi}(a)=\tau^I(y)da_U\big(dp_i(\xi)\big)\neq 0\). Hence the result.
%\begin{eqnarray*}
%\nabla_i (p_i^*a_{i,I}\cdot p_i^*\tau^{(r+1)I})(v)&=&\nabla_i (p_i^*a_{i,I})(v)\cdot  \tau^{(r+1)I}(y)+a_{i,I}(y)\cdot \nabla_i(p_i^*\tau^{(r+1)I})(v)\\
%&=&d (a_{i,I})\big((p_i)_*(v)\neq 0.
%\end{eqnarray*}
%In conclusion, for any $I\in \ib_i$ so that $\tau^I(y)\neq 0$, $\eta_{i,I,v}$ is always of rank 1.
%
%On the other hand, when $\tau^I(y)=0$, we consider the following particular case,  $\tau^I=\tau_j^{\delta_j-1}\tau_k$ with $j\in J:=\{j|\tau_j(y)\neq 0\}$ and $k\in J':=\{j|\tau_j(y)=0 \mbox{ and }\nabla_j(\pi_j^*\tau_j)(v)\neq 0\}$. Then  if we take $a_{i,I}\in H^0(Y,L^{\varepsilon_i})$ so that $a_{i,I}(y)\neq 0$, one has
%\begin{eqnarray*}
%\nabla_{i,I}(a_{i,I})(v)&=&\frac{1}{p_i^*\tau^{rI}}\big(\nabla_i (p_i^*a_{i,I}\cdot p_i^*\tau_j^{(r+1)(\delta_j-1)})(v)\cdot{\tau_k}^{r+1}(y)+a_{i,I}(y)\tau_j^{(r+1)(\delta_j-1)}(y)\cdot \nabla_i(p_i^*\tau_k^{r+1})(v)\big)\\
%	&=&(r+1)a_{i,I}(y)\tau_j^{\delta_j-1}(y)\cdot \nabla_i(p_i^*\tau_j)(v)\neq 0.
%\end{eqnarray*}
%Thus the rank of $\eta_{i,I,v}$ is still equal to 1.
%
%It follows from the assumption that $J$ has cardinality $(k_0+1)$ and $J'$ has cardinality $(k_1-k_0)$. The desired lower bound of the rank follows. Observe that if $\delta_i\geq 2$, we have a better bound
%	$$
%{\rm rank}(\eta_{i,v})\geq \binom{k_0+\delta_i}{k_0}+(k_0+1)(k_1-k_0).
%$$
\end{proof}
This lemma allows us to  control the indeterminacies of \(\varPhi_i\) in the following way.
\begin{proposition}\label{indeterminacy}
Assume  $\delta_i\geqslant 2n$. Then there exists a non-empty open set $\A_{i}^{\rm def}\subset \A_i$ such that the indeterminacy locus of $\varPhi_i$ does not intersect $\A_{ i}^{\rm def}\times\lb_{i,1}(Y)$.
\end{proposition}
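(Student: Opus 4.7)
The plan is to analyze the closed incidence variety
\[
B_i := \bigl\{(\af_i, [\xi]) \in \A_i \times \lb_{i,1}(Y) : \nabla_i^\xi(\af_i) = 0 \bigr\},
\]
and show that $\pr_1(B_i) \subsetneq \A_i$; then $\A_i^{\mathrm{def}} := \A_i \setminus \overline{\pr_1(B_i)}$ will do. The key structural remark is that $\A_i = \bigoplus_{I \in \ib(\delta_i)} H^0(Y, L^{\ep_i})$ and $\nabla_i^\xi(\af_i) = \sum_I \nabla_{i,I}^\xi(a_{i,I})\, x^I$ decouples across the different summands. Since each $\nabla_{i,I}^\xi$ takes values in $\cb$, this means that the fiber of $\pr_2 : B_i \to \lb_{i,1}(Y)$ over $[\xi]$ is an affine subspace of $\A_i$ of codimension exactly $\#\{I : \nabla_{i,I}^\xi \neq 0\} = \rank(\nabla_i^\xi)$.

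I would stratify $\lb_{i,1}(Y)$ via the stratification $(Y_J)_{J \subset \{0,\dots,n\}}$ introduced before \cref{rank}, setting $S_J := \pi_i^{-1}(p_i^{-1}(Y_J))$, which has dimension $k + n + 1$ for $k := n - \#J$. Combining the codimension computation above with \cref{rank} yields
\[
\dim\bigl(B_i \cap (\A_i \times S_J)\bigr) \leq (k + n + 1) + \dim \A_i - \binom{k + \delta_i}{k}.
\]
For $k \geq 1$, the hypothesis $\delta_i \geq 2n$ together with $n \geq 2$ gives $\binom{k + \delta_i}{k} > k + n + 1$ (the critical case is $k = 1$, where the inequality reads $\delta_i + 1 > n + 2$; strict monotonicity of $\binom{k+\delta_i}{k}$ in $k$ disposes of the remaining values). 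Consequently, $\pr_1(B_i \cap (\A_i \times S_J))$ is a proper closed subvariety of $\A_i$.

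The main obstacle is $k = 0$: then $Y_J$ is a finite set of isolated points and the bound $\binom{\delta_i}{0} = 1$ is too weak to beat $\dim S_J = n + 1$. To handle it, I would fix $y \in Y_J$ with $J = \{j_0, \dots, j_{n-1}\}$ and $\{\ell\} := \{0,\dots,n\} \setminus J$, pick local coordinates $(t_i, z_1, \dots, z_n)$ adapted to $y$ as in the proof of \cref{rank} (so $\tau_{j_m}$ corresponds to $z_m$ and $\tau_\ell$ to a unit $u$), and expand $\xi = \lambda\, t_i \partial_{t_i} + \sum_m \mu_m \partial_{z_m}$. Using \eqref{eq:NablaLocal}, one checks that $\nabla_{i,I}^\xi$ vanishes identically except possibly when $I = \delta_i e_\ell$ (where it produces a linear relation in $(\lambda, \mu)$ involving $a(y)$ and $da|_y$) or $I = e_{j_m} + (\delta_i - 1)e_\ell$ (where it reduces to $a(y)\,\mu_m = 0$, since $\tau^I = z_m u^{\delta_i - 1}$ has $d\tau^I|_y = u(y)^{\delta_i-1}\, dz_m \neq 0$).

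Imposing the open nonvanishing conditions $a_{i,\delta_i e_\ell}(y) \neq 0$ and $a_{i, e_{j_m} + (\delta_i - 1) e_\ell}(y) \neq 0$ for all $m \in \{0, \dots, n-1\}$ (possible, and Zariski open, thanks to the very ampleness of $L^{\ep_i}$), the relations coming from $I = e_{j_m} + (\delta_i - 1) e_\ell$ force $\mu_m = 0$ for every $m$; the relation from $I = \delta_i e_\ell$ then collapses to $a_{i,\delta_i e_\ell}(y)\, \lambda = 0$, forcing $\lambda = 0$ as well, contradicting $\xi \neq 0$. Since $Y_J$ is finite and only finitely many subsets $J$ with $\# J = n$ appear, the intersection of these open conditions remains a nonempty Zariski open subset of $\A_i$. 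Intersecting it with the open complements obtained in the case $k \geq 1$ yields $\A_i^{\mathrm{def}}$.
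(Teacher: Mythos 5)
Your proof is correct, and for the stratum with $k := n - \# J \geq 1$ it follows the paper's argument essentially verbatim: stratify $\lb_{i,1}(Y)$ by $Y_J$, apply the rank bound of \cref{rank}, and push the dimension count down to $\A_i$. Where you genuinely diverge is in the boundary case $k = 0$. The paper handles it by further stratifying the projectivized tangent fiber $\P^n$ over $y$ by the sets $\Sigma(K) = \{[\xi] \ |\ \xi_j = 0 \Leftrightarrow j \in K\}$ for $K \subset \{1,\dots,n\}$, computes $\rank f_y(\cdot,\xi) = 1 + n - \# K$ on each $\Sigma(K)$ (where $f_y = (f_y^0,\dots,f_y^n)$ collects the only possibly nonzero linear forms), and concludes $\dim\big(B_y \cap (\A_i \times \Sigma(K))\big) = \dim \A_i - 1$; summing over $K$ shows $B_y$ does not dominate $\A_i$. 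You instead impose the explicit open nonvanishing conditions $a_{i,\delta_i e_\ell}(y) \neq 0$ and $a_{i,e_{j_m}+(\delta_i-1)e_\ell}(y) \neq 0$ and observe that they force $\mu_m = 0$ for all $m$ from the relations $f_y^m = 0$, and then $\lambda = 0$ from the surviving $I = \delta_i e_\ell$ relation (whose $da(\xi')$ term has been killed), contradicting $\xi \neq 0$. Both routes produce a nonempty Zariski open $\A_i^{\rm def}$; yours has the slight advantage of exhibiting an explicit open subset of $\A_i^{\rm def}$ rather than running a second-level dimension count inside the fiber, at the price of a slightly more hands-on local computation.

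One small remark: the paper's inline dimension formula for $\dim(p_i\circ\pi_i)^{-1}(Y_J)$ reads $2n+1-k$, whereas the correct value you use is $k+n+1$; this appears to be a typographical slip in the paper, and with either formula the hypothesis $\delta_i\geqslant 2n$ (with $n\geqslant 2$) suffices, so there is no mathematical discrepancy. Your bound via $\binom{k+\delta_i}{k}$ rather than the paper's weaker $k+\delta_i$ is also fine; both work under the stated hypothesis.
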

\begin{proof} Let us define
\[B:=\left\{(\af_i,[\xi])\in \A_{i}\times \lb_{i,1}(Y)\ |\ \nabla_{i,I}(a_{i,I})(\xi)=0\ \forall I\right\}.\]
Certainly the indeterminacy locus of \(\Psi_i\) is contained in \(B\). We are now going to prove that \(B\) does not dominate \(\A_i\) under the projection \(\pr_1\) (here we denote by \(\pr_1\) and \(\pr_2\) the projections from \(\A_i\times\lb_{i,1}(Y)\) to each factor). To prove this we are going to prove that for each \(J\subset \{0,\dots, n\}\) the locus
\[B_J=B\cap (p_i\circ \pi_i\circ\pr_2)^{-1}(Y_J)\]
does not dominate \(\A_i\). This will be done by dimension count. 

Fix \(J\subset \{0,\dots, n\}\). Let us first suppose  that \(k:=n-\# J>0\). Fix \([\xi]\in (p\circ p_i)^{-1}(Y_J)\) and set \(B_{\xi}=B\cap\pr_2^{-1}([\xi])\). The projection \(\pr_1\) induces  an isomorphism
\[B_{\xi}\cong \pr_1(B_\xi)=\ker \nabla_i^{\xi}.\]
Thus by Lemma \ref{rank}, we obtain, in view of our hypothesis on \(\delta_i\):  
\[\dim B_{\xi}=\dim \A_i-\rank \nabla_i^{\xi}\leqslant \dim \A_i-\binom{k+\delta_i}{k}\leqslant \dim \A_i-(k+\delta_i).\]
Therefore 
\[\dim B_J\leqslant \dim (p_i\circ \pi_i\circ\pr_2)^{-1}(Y_J)+\dim \A_i-k-\delta_i=2n+1-k+\dim\A_i -k-\delta_i<\dim \A_i.\]
Hence \(B_J\) can not dominate \(\A_i\).

Let us now suppose \(n-\# J=0\).  Recall that \(Y_J\) is of dimension \(0\).
 Fix \(y\in Y_J\). We will prove that \(B\cap (p_i\circ\pi_i\circ \pr_2)^{-1}(y)\) does not dominate \(\A_i\). This will prove that \(B_J\) does not dominate \(\A_i\) since there are only finitely many points in \(Y\).
 Without loss of generality,  we may assume that  $J=\{1,\dots, n\}$. Take an open neighborhood $U$ of $y$ in $Y$ with the coordinates $(z_1,\ldots,z_n)$ centered at \(y\) such that   $(\tau_j=0)=(z_j=0)$  for all $j\in\{1,\ldots,n\}$,  and such that $(\tau_0=0)\cap U=\varnothing$.
  Then there exists natural coordinates $(t_i,z_1,\ldots,z_n)$ for $p_i^{-1}(U)\simeq \cb\times U$ such that $Y\cap p_i^{-1}(U)=(t_i=0)$. In this setting,  one has an isomorphism 
\begin{eqnarray*}
 \cb\times U  \times \pb^{n} &\xrightarrow{\simeq}& 	\lb_{i,1}(Y)\\
 (t_i,z,[\xi_0,\ldots,\xi_n])&\mapsto&  \left(t_i,z,\left[\xi_0t_i\frac{\d }{\d t_i}+\xi_1\frac{\d }{\d z_1}+\cdots+\xi_n\frac{\d }{\d z_n}\right]\right).
\end{eqnarray*}
Let us now observe that  for any \(\xi=\xi_0t_i\frac{\d }{\d t_i}+\xi_1\frac{\d }{\d z_1}+\cdots+\xi_n\frac{\d }{\d z_n}\),   \eqref{eq:NablaLocal} implies (let us drop here the local notation \(a_U,\tau^I_U\) for readability)
\[\nabla_{i,I}^{\xi}(a)=\big((r+1)ad\tau^I-\tau^Ida\big)(\xi')+\xi_0a\tau^I,\]
where \(\xi'=\xi_1\frac{\d }{\d z_1}+\cdots+\xi_n\frac{\d }{\d z_n}\). This expression is independent of the variable \(t_i\). Therefore, there exists \(B_y\subset \A_i\times \P^{n}\) such that if one denotes by \(\pr_{14}:\A_i\times \cb\times \{y\}\times \P^{n}\to \A_i\times \P^{n}\) the canonical projection, under our choice of coordinates,
\[B\cap (p_i\circ\pi_i\circ \pr_2)^{-1}(y)\cong \pr_{14}^{-1}(B_y).\]
We are therefore reduced to prove that \(B_y\) does not dominate \(\A_i\). Take \(I=(i_0,\dots, i_n)\in \ib(\delta_i)\) and \(\af_i\in \A_i\). If \(i_0\leqslant \delta_i-2\) then \(\nabla^\xi_{i,I}(a_{i,I})|_{y}=0\). If \(I=(\delta_i,0,\dots,0)\) then 
\[f^0_y(\af_i,\xi):=\nabla^\xi_{i,I}(a_{i,I})|_{y}=\big((r+1)a_{i,I}\delta_i\tau_0^{\delta_i-1}d\tau_0-\tau_0^{\delta_i}da_{i,I}\big)(\xi')+\xi_0a_{i,I}\tau_0^{\delta_0},\]
and if there exists \(j\in \{1,\dots,n\}\) such  \(I=(\delta_0-1,0,\dots, 1,\dots, 0)\) where the element ``\(1\)'' is at slot number \(j\), then 
\[f^j_y(\af_i,\xi):=\nabla^\xi_{i,I}(a_{i,I})|_{y}=(r+1)\frac{\d a_{i,I}}{\d z_j}\xi_j \tau_0^{\delta_i-1}.\] 
With this notation one has 
\[B_y=\{(\af_i,[\xi])\in \A_i\times \P^{n}\ |\ f^0_y(\af_i,\xi)=\cdots =f^n_y(\af_i,\xi)= 0\}\]
Hence, for any \([\xi]\in \P^{n}\), one has  \(B_y\cap (\A_i\times \{[\xi]\})=\ker\big(f_y(\cdot,\xi)\big)\), where \[f_y(\cdot,\xi)=\big(f^0_y(\cdot,\xi),\dots, f^n_y(\cdot,\xi)\big):\A_i\to \cb^{n+1}\] is the linear map induced by the different \(f^j_y(\cdot,\xi)\)'s.  Let us now stratify \(\P^{n}\) as follows: for any \(K\subset \{1,\dots,n\}\) define \[\Sigma(K):=\{[\xi]\in \P^{n}\ | \ \forall \ j\in \{1,\dots, n\}, \ \ \xi_j=0\ \Leftrightarrow\ j\in K\}.\]
One has \(\dim \Sigma(K)=n-\#K\). Moreover, for any \([\xi]\in \Sigma(K)\) one verifies easily that \(\rank f_y(\cdot,\xi)=1+n-\#K\), and therefore 
\[\dim B_y\cap (\A_i\times \Sigma(K))= \dim \A_i- (1+n-\#K)+\dim \Sigma(K)=\dim \A_i-1<\dim \A_i.\]
Therefore, for any \(K\subset \{1,\dots,n\}\), \( B_y\cap (\A_i\times \Sigma(K))\) does not dominate \(\A_i\), and since the family \(\Sigma(K)\) stratifies \(\P^{n}\), the locus \(B_y\) does not dominate \(\A_i\) either, which implies the result.
\end{proof}
\begin{rem}
In fact \cref{indeterminacy} still holds under the weaker assumption \(\delta_i\geqslant 3\), but the proof would require the use of a more intricate stratification, similar to the one used in \cite{BD15}. Since we will soon suppose that \(\delta_i\geqslant 4n-1\geqslant 2n\), we chose to present this non-optimal result in order to avoid those complications.
\end{rem}
Let us define \[\A^{\rm def}:=\big(\A_1^{\rm def}\times \cdots \times \A_\c^{\rm def}\big)\cap \A^{\rm sm}\subset \A.\]
Recall that \(q_i:(\V,D_i)\to (\lb_i,Y)\) is a morphism of smooth log pairs and therefore we obtain a morphism 
\[{}^tdq_i:q_i^*\Omega_{\lb_i}(\log Y)\to \Omega_{\V}(\log D_i).\]
Composing this map with the map \(\Omega_{\V}(\log D_i)\hookrightarrow \Omega_{\V}(\log D)\), twisting it by \(p_{\V}^*L^{\ep_i+\delta_i}\), and applying the global section functor, we obtain a \(\cb\)-linear map
\[H^0(\lb_i,\Omega_{\lb_i}(\log Y)\otimes p_{i}^*L^{\ep_i+\delta_i})\to H^0(\V,\Omega_{\V}(\log D)\otimes p_{\V}^*L^{\ep_i+\delta_i})\cong H^0({\V}_1(D), \O_{{\V}_1(D)}(1)\otimes \pi_{\V}^*p_{\V}^*L^{\ep_i+\delta_i})\]
%\begin{rem}\label{equivalent}
%Denote by $\overline{q}_i:\pb\big(\Omega_{\lbb_i}(Y)\big)\rightarrow \lbb_i$ the projection map.  It follows from \cref{indeterminacy} that, for any $\af_i\in U_{{\rm def},i}$, the sublinear system $V_{\af_i}$ of $|\oc_{\pb\big(\Omega_{\lbb_i}(Y)\big)}(1)\otimes (\overline{q}_i\circ p_i)^*L^{\epsilon_i+\delta_i}|$  generated by sections $\{\nabla_{i,I}(a_{i,I})\}_{I\in \ib_i}$ is base point free.
%\end{rem}
%
%
For any  \(I\in \ib(\delta_i)\) and any  \(a\in H^0(Y,L^{\ep_i})\), we denote by 
\[\nabla'_{i,I}(a)\in H^0({\V}_1(D), \O_{{\V}_1(D)}(1)\otimes \pi_{\V}^*p_{\V}^*L^{\ep_i+\delta_i})\]
the image of \(\nabla_{i,I}(a)\) under the above composition. It follows from  \cref{resolve} that this global section vanishes along the ideal \(\js_i\) (with the notation of Definition \ref{obstruction} for the log pair \((\V,D)\)), namely 
\begin{equation}\label{eq:VanishingNabla}
\nabla'_{i,I}(a)\in H^0({\V}_1(D), \O_{{\V}_1(D)}(1)\otimes \pi_{\V}^*p_{\V}^*L^{\ep_i+\delta_i}\otimes \js_i).\end{equation}

%
%Notice that $\pi_i:(X,D_i)\rightarrow (\lbb_i,Y)$ is a surjective morphism between log manifolds. 
%For any $i$, $I\in \ib_i$,  and $a_{i,I}\in H^0(Y,L^{\epsilon_i})$, let $\nabla_{i,I}'(a_{i,I})\in H^0\big(\overline{X}_1,\oc_{\overline{X}_1}(1)\otimes (\pi\circ\overline{\pi})^*L^{\epsilon_i+\delta_i}\big)$ be the image of $\nabla_{i,I}(a_{i,I})$ under the following composition of linear maps
%\begin{eqnarray} \label{composition}
%  H^0\big(\lbb_i, \Omega_{\lbb_i}(Y)  \otimes (p^*_iL)^{\epsilon_i+\delta_i}\big)\xrightarrow{({\pi}_i)_*} 
%H^0\big(X,\Omega_X(\log D_{i})\otimes {(\pi^*L)}^{\epsilon_i+\delta_i}\big)\hookrightarrow \\ \nonumber H^0\big(X,\Omega_X(\log D)\otimes {(\pi^*L)}^{\epsilon_i+\delta_i}\big)\xrightarrow{(\overline{\pi})_*^{-1}}
%H^0\big(\overline{X}_1,\oc_{\overline{X}_1}(1)\otimes (\pi\circ\overline{\pi})^*L^{\epsilon_i+\delta_i}\big),
%\end{eqnarray}
%where $\overline{\pi}:\overline{X}_1\rightarrow X$ is the natural projection map.
Since the  resolution algorithm defined in \cref{prop:ResolutionAlgo} provides in particular a log resolution of $\js_i$, one has 
\begin{equation}\label{eq:DefF_i}
\mu^{*}\js_i=\O_{\widehat{\V}_1(D)}(-F_i)\end{equation}
for some effective divisor \(F_i\) on \(\widehat{\V}_1(D)\). Since \(\widehat{\V}_1(D)\) is smooth, it follows from \eqref{eq:VanishingNabla} that there exists 
\[\widehat{\nabla}_{i,I}(a)\in H^0\big(\widehat{\V}_1(D), \mu^*\big(\O_{\widehat{\V}_1(D)}(1)\otimes \pi_{\V}^*p_{\V}^*L^{\ep_i+\delta_i}\big)\otimes \O_{\widehat{\V}_1(D)}(-F_i)\big)\]
such that 
\[\widehat{\nabla}_{i,I}(a)=F_i\cdot\mu^*\nabla'_{i,I}(a).\]

% Let us denote by $F_i$ the effective divisor in $\widehat{X}_1$ such that $\mu^*(\js_i)=\oc_{\widehat{X}_1}(-F_i)$, and there exists a $\cb$-linear map
%\begin{eqnarray}\label{pull-back}
%\widehat{\nabla}_{i,I}(a_{i,I}):H^0(Y,L^{\varepsilon_i})\rightarrow H^0\Big(\widehat{X}_1,\mu^*\big(\oc_{\overline{X}_1}(1)\otimes (\pi\circ \overline{\pi})^*L^{\varepsilon_i+\delta_i}\big)\otimes \oc_{\widehat{X}_1}(-F_i)\Big)
%\end{eqnarray}
%so that 
%$$
%\widehat{\nabla}_{i,I}(a_{i,I})\cdot F_i= \nabla'_{i,I}(a_{i,I}) .
%$$ 
Then, as in  \eqref{rational map}, we define a rational map 
\begin{eqnarray}\label{rational map2}
\widehat{\varPhi}_i:\A_i\times \widehat{\V}_1(D)&\dashrightarrow& |\O_{\P^n}(\delta_i)|\\\nonumber
(\af_i,\hat{w})&\mapsto&  \Big[\sum_{I\in \ib(\delta_i)}\widehat{\nabla}_{i,I}(a_{i,I})(\hat{w})x^I\Big].
\end{eqnarray}
By \cref{prop:ResolutionAlgo},  $\mu$ resolves the indeterminacies of the rational map $ \gamma_i: {\V}_1(D)   \dashrightarrow   {\V}_1(D_i)  $. Let us  denote by $\nu_i=\gamma_i\circ \mu$. By the definition of \(\widehat{\nabla}_{i,I}\) one can easily show that one has the following commutative diagram
\begin{equation}\label{Diagram}
\xymatrix{
\A_i\times \widehat{\V}_1(D) \ar^{\mathds{1}\times \mu}[d] \ar^{\mathds{1}\times \nu_i}[dr] \ar@{-->}@/^1pc/_{\widehat{\varPhi}_i}[drrr] & & & \\
\A_i\times {\V}_1(D)\ar@{-->}^{\mathds{1}\times \gamma_i}[r]  &\A_i\times {\V}_1(D_i)\ar@{-->}^{\mathds{1}\times [{}^tdq_i]}[r]  &\A_i\times {\lb_{i,1}}(Y) \ar@{-->}^{\varPhi_i}[r]  &|\O_{\P^n}(\delta_i)|,
}
\end{equation}
where \(\mathds{1}\) denotes the identity on \(\A_i\).
%\begin{lem}\label{decompose}
%	$\Phi_i$ can be defined as the composition of the following rational maps
%	\[\xymatrix{
%S_i\times \widehat{X}_1\ar^{\mathds{1}\times \mu}[r]	\ar@/_2pc/^{\mathds{1}\times \nu_i}[rr] & S_i\times \overline{X}_1 \ar@{-->}[r]  & S_i\times \pb\big(\Omega_X(\log D_i)\big) \ar@{-->}[r] & S_i\times \pb(\Omega_{\lbb_i}(Y)) \ar@{-->}^-{\Psi_i}[r]&  P(\cb^{\ib_i}).
%}
%	\]
%	\end{lem}
Let us now define \({\V}_1(D_i)^{\circ}\) to be the locus where \([{}^tdq_i]\) is regular (i.e. the complement of the indeterminacy locus of \([{}^tdq_i]\)), and let us define 
\[\widehat{\V}_1(D)^{\circ,i}:= \nu_i^{-1}({\V}_1(D_i)^{\circ}).\]
Then we have the following.

%Let us shrink $U_{{\rm def},i}\subset S_i$  to some non-empty Zariski open set $U'_{{\rm def},i}$ such that 
%$$S_{\rm def}:=U'_{{\rm def},1}\times \cdots U'_{{\rm def},c}\subset U_{{\rm def},1}\times \cdots U_{{\rm def},c}\bigcap S,$$
%where $S$ is the  Zariski open set of $S_1\times \cdots \times S_c$ defined in \Cref{construction}. We then have the following
\begin{lem}\label{regular}
	For any $\af=(\af_1,\ldots,\af_\c) \in \A^{\sm}$, \(\widehat{Z}_{\af,1}(D_{\af})\subset \widehat{\V}_1(D)^{\circ,i}\). In particular, the restriction of \(\mathds{1}\times ([{}^tdq_i]\circ \nu_i)\) to \(\widehat{\mathscr{Z}}^{\rm rel}_1(\ds)\) is regular :
	\[\widehat{\mathscr{Z}}^{\rm rel}_1(\ds)\subset \A^{\sm}\times \widehat{\V}_1(D)^{\circ,i}{\longrightarrow} \A_i\times {\lb_{i,1}}(Y).\]
	
%	and any $\bff_i\in U_{{\rm def},i}$, by \eqref{commutative} one has the following commutative diagram
%	\[
%	\xymatrix{
%		\widehat{X}_1\ar^-{\nu_i}[r]\ar@/^2pc/@{-->}^{\tilde{\pi}_i}[rr]&\pb\big(\Omega_X(\log D_i)\big) \ar@{-->}[r] &   \pb(\Omega_{\lbb_i}(Y))\ar^{\Psi_i(\bff_i,\bullet)}[rr]&&P(\cb^{\ib_i})\\
%		\widehat{Z}_{1,\af}\ar^-{\nu_{\af,i}}[r]\ar@{_{(}->}[u]&\pb\big(\Omega_{Z_\af}(\log D^{(i)}_\af)\big)\ar@{_{(}->}[u]\ar[ru]&&&
%	}
%	\] 
%	where $D_{\af}^{(i)}:= Z_{\af}\bigcap D_i$. 
	\end{lem}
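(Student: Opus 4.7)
The plan is to exhibit the image of $\widehat{Z}_{\af,1}(D_\af)$ under $\nu_i$ as lying inside a closed subvariety of $\V_1(D_i)$ that avoids the indeterminacy locus of $[{}^t dq_i]$, namely the image on projectivized log cotangent bundles of the transverse embedding $q_i|_{Z_\af} : Z_\af \hookrightarrow \lb_i$.

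First I would verify that $q_i|_{Z_\af} : (Z_\af, D_{i,\af}) \to (\lb_i, Y)$ is a transverse embedding of smooth log pairs. Since $Z_\af$ is cut out in $\V$ by $T_j = p_\V^\ast\sigma_j(\af_j)$ for $j = 1, \ldots, c$, the restriction $q_i|_{Z_\af}$ coincides, under the biholomorphism $p_\V|_{Z_\af} : Z_\af \xrightarrow{\sim} Y$, with the graph embedding $y \mapsto (y, \sigma_i(\af_i)(y))$; in particular it is a closed immersion. Moreover $q_i^{-1}(Y) \cap Z_\af = D_i \cap Z_\af = D_{i,\af}$, and since $\sigma_i(\af_i)$ vanishes to first order along the smooth hypersurface $H_{\af_i}$, this preimage is transverse in $\lb_i$. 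A direct local computation (separating whether $z_0$ lies on $D_{i,\af}$ only, on some $D_{j,\af}$ with $j \neq i$, on their intersection, or outside $D_\af$) then shows that the composite
\[
T_{Z_\af}(-\log D_{i,\af}) \hookrightarrow T_\V(-\log D_i)|_{Z_\af} \xrightarrow{dq_i} q_i^\ast T_{\lb_i}(-\log Y)|_{Z_\af}
\]
is a pointwise injection of vector bundles. In particular the induced morphism $[{}^t d(q_i|_{Z_\af})] : Z_{\af,1}(D_{i,\af}) \to \lb_{i,1}(Y)$ is regular, and the natural inclusion $Z_{\af,1}(D_{i,\af}) \hookrightarrow \V_1(D_i)$ lands inside the regular locus $\V_1(D_i)^\circ$ of $[{}^t dq_i]$.

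Next I would use the functoriality provided by \cref{part2} together with the naturality of the rational maps $\gamma_i$ with respect to the transverse embedding $i_{Z_\af} : (Z_\af, D_\af) \hookrightarrow (\V, D)$ to assemble the commutative square
\[
\xymatrix@C=2.5em{
\widehat{Z}_{\af,1}(D_\af) \ar@{^{(}->}[r] \ar[d]_{\nu_i^{Z_\af}} & \widehat{\V}_1(D) \ar[d]^{\nu_i} \\
Z_{\af,1}(D_{i,\af}) \ar@{^{(}->}[r] & \V_1(D_i),
}
\]
where $\nu_i^{Z_\af}$ is obtained by applying the construction of $\nu_i$ to the pair $(Z_\af, D_\af)$ using \cref{part1}, and the horizontal inclusions are those induced by the morphisms of log pairs.

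Combining the two steps yields $\nu_i(\widehat{Z}_{\af,1}(D_\af)) \subset Z_{\af,1}(D_{i,\af}) \subset \V_1(D_i)^\circ$, whence $\widehat{Z}_{\af,1}(D_\af) \subset \widehat{\V}_1(D)^{\circ,i}$. Since $\widehat{\mathscr{Z}}^{\rm rel}_1(\ds)$ is the fibered union of these $\widehat{Z}_{\af,1}(D_\af)$ over $\af \in \A^{\sm}$, it lies in $\A^{\sm} \times \widehat{\V}_1(D)^{\circ,i}$, so that the composite $\mathds{1} \times ([{}^t dq_i] \circ \nu_i)$ is regular on $\widehat{\mathscr{Z}}^{\rm rel}_1(\ds)$. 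The main obstacle in this argument is the verification of the commutative square above, which ultimately reduces to the naturality of the log cotangent construction and of the minimal resolution $\mu$ with respect to the transverse embedding $Z_\af \hookrightarrow \V$.
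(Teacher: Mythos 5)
Your proof is correct and follows essentially the same strategy as the paper: use the functoriality of the minimal resolution (\cref{prop:ResolutionAlgo}) to reduce to showing \(Z_{\af,1}(D_{i,\af})\subset \V_1(D_i)^\circ\), then exploit the fact that \(q_i|_{Z_\af}\) is, up to the biholomorphism \(p_\V|_{Z_\af}\), the transverse graph embedding of \(\sigma_i(\af_i)\) into \((\lb_i,Y)\). The paper phrases the second step as a short contradiction argument via the factorization \(p_\V = p_i\circ q_i\) rather than spelling out that the log-differential of the transverse immersion is a fiberwise bundle injection, but these are two views of the same local computation and both are sound.
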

\begin{proof}
	By \cref{prop:ResolutionAlgo},  one has \( \nu_i(\widehat{Z}_{\af,1}(D_\af))= Z_{\af,1}(D_{i,\af}) \). It thus suffices to prove that \(Z_{\af,1}(D_{i,\af})\subset {\V}_1(D_i)^{\circ} \).
Let  \([\xi]\in \V_1(D_i)\) be  in the indeterminacy locus of \([{}^tdq_i]\). This means that  \(dq_i(\xi)=0\) in \(T\lb_i(-\log Y)\), from which we deduce that  \(\xi\neq 0 \) when seen as a vector in \(T\V \). Assume moreover that   \([\xi]\in Z_{\af,1}(D_{i,\af})\). Then \(d(p_\V|_{Z_{\af}})(\xi)\neq 0 \) in \(TY \) since  \(\p_{\V}|_{Z_{\af}}:Z_{\af}\rightarrow Y\) is a  biholomorphism.  But then from the commutative diagram 
\[\xymatrix{
	\V\ar^{q_i}[r]\ar_{p_{\V}}[d]& \lb_i\ar^{p_{i}}[dl]\\
	Y
}
\]
one deduces that  \(dq_i(\xi)\neq 0 \) in \( T\lb_i \), and  \emph{a fortiori} \(dq_i(\xi)\neq0\) in \(T\lb_i(-\log Y)\), which is a contradiction.
%	For any $\af\in S_{\rm def}$, recall that $H_{\af_i}$ to be  the zero locus of the section in $X$  given by
%\[\Sigma^i(\af_i):=\pi_i^*\big(T_i-p_i^*\sigma^i(\af_i)\big)\in H^0(X,\pi^*L_i)\]
%and $Z_\af:=\bigcap_{i=1}^{c}H_{\af_i}$. Denote by $Y_{\af_i}$ the hypersurface in $\lbb_i$ defined by $\{T_i-p_i^*\sigma^i(\af_i)\}=0$, and $E_{\af_i}:=\{T_1=0\}\bigcap Y_{\af_i}$, then $(Y_{\af_i},E_{\af_i})$ is a sub-log manifold of $(\lbb_i,Y)$, and $(Z_{\af},D_{\af}^{(i)})\rightarrow (Y_{\af_i},E_{\af_i})$ is a biholomorphism between log manifolds.  Thus it induces a morphism $\pb\big(\Omega_{Z_\af}(\log D^{(i)}_\af)\big)\rightarrow \pb(\Omega_{\lbb_i}(Y))$. By \eqref{commutative} and \cref{indeterminacy} we conclude the proof of the lemma.
\end{proof}
%By \cref{regular}, after shrinking the family $\widehat{\zs}_{1}^{\rm rel}$ over the parameter space $S_{\rm def}$, if we denote by $\widehat{X}_1^\circ:={\rm pr} (\widehat{\zs}_1^{\rm rel})$,  the restriction of the  rational map
%$$
%\tilde{\pi}_i: \widehat{X}_1  \dashrightarrow   \pb(\Omega_{\lbb_i}(Y))
%$$
%to $  \widehat{X}_1^\circ $ is a regular morphism for each $i$.  
Since we will be working simultaneously with all \(i\in\{1,\dots,\c\}\) we define  \[\widehat{\V}_1(D)^{\circ}:= \bigcap_{i=1}^\c\widehat{\V}_1(D)^{\circ,i}.\]
Define a rational map
\begin{eqnarray*}
\widehat{\varPhi}:\A \times \widehat{\V}_1(D)  &\dashrightarrow & |\O_{\P^n}(\delta_1)|\times \cdots\times |\O_{\P^n}(\delta_\c)|\\
(\af,\hat{w})&\mapsto& \big(\widehat{\varPhi}_1(\af_1,\hat{w}),\ldots,\widehat{\varPhi}_\c(\af_\c,\hat{w})\big)
\end{eqnarray*}
then by \cref{indeterminacy},  \eqref{Diagram} and \cref{regular},  its restriction to $ \A^{\rm def}\times\widehat{\V}_1(D)^{\circ} $ is regular. In particular, the restriction of $\widehat{\varPhi}$ to  $\widehat{\zs}_1^{\rm rel}(D)$ is also regular. 
%Let us define
%\begin{eqnarray*}
%\Xi:  S_1\times \cdots\times S_c\times \widehat{X}_1 &\dashrightarrow& P(\cb^{\ib_1})\times \cdots\times P(\cb^{\ib_c})\times \pb^N\\
%(\af,\tilde{w})&\mapsto& \big(\Phi(\tilde{w},\af),[\tau_0(\tilde{w})^r,\ldots,\tau_N(\tilde{w})^r]\big).
%\end{eqnarray*} 
%Similarly, the restriction of $\Xi$ to $S_{\rm def}\times\widehat{X}_1^\circ $ is also regular.

%%%%%%%%%%%%%%%%%%%%%%%%%%%%%
%%%%%%%%%%%%%%%%%%%%%%%%%%%%%%%
\subsection{Universal complete intersections} Let us define  
\[\G:=|\O_{\P^n}(\delta_1)|\times \cdots\times |\O_{\P^n}(\delta_n)|\]
and let \(\ys\subset \G\times \P^n\) be the universal, smooth, complete intersection 
\[\ys:=\big\{(P_1,\dots,P_\c,[x])\in \G\times \P^n\ | \ P_1(x)=\cdots=P_\c(x)=0\big\}.\]
This object was of   critical importance in \cite{BD15,Bro17}, and  called the \emph{universal Grassmannian} in \cite{Den17}. 

Let us define
\begin{eqnarray*}
\widehat{\varPsi}:  \A^{\rm def} \times \widehat{\V}_1(D)^{\circ} &\to& \G\times \pb^n\\
(\af,\hat{w})&\mapsto& \big(\widehat{\varPhi}(\af,\hat{w}),[\tau_0(\hat{w})^r,\ldots,\tau_n(\hat{w})^r]\big).
\end{eqnarray*} 
where \(\tau_i(\hat{w}):= \tau_i\big(p_{\V}\circ \pi_{\V}\circ\mu(\hat{w})\big)\) for each \(i\in \{0,\dots, n\}\).
Let us also define \(\widehat{\zs}^{\rm rel}_1:=\widehat{\zs}^{\rm rel}_1(\ds)\cap \big(\A^{\rm def}\times \widehat{\V}_1(D)\big)\).
\begin{proposition}\label{prop:Restriction}When restricted to \(\widehat{\zs}^{\rm rel}_1\subset \A^{\rm def}\times \widehat{\V}_1(D)^{\circ}\), the morphism \(\widehat{\varPsi}\) factors through \(\ys\):
\[\widehat{\varPsi}|_{\widehat{\zs}^{\rm rel}_1}:\widehat{\zs}^{\rm rel}_1\to \ys\subset \G\times \P^n.\] 
\end{proposition}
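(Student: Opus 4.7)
The goal is to show that for every $(\af,\hat{w})\in\widehat{\zs}^{\rm rel}_1$ and every $i\in\{1,\dots,n\}$, the polynomial $\sum_{I\in\ib(\delta_i)}\widehat{\nabla}_{i,I}(a_{i,I})(\hat{w})\,x^I$ vanishes at the point $[x_0,\dots,x_n]=[\tau_0(\hat{w})^r,\dots,\tau_n(\hat{w})^r]$. Since $\ys$ is closed and both sides of the desired identity are sections of the appropriate line bundle, it will suffice to establish this on a dense open subset of $\widehat{\zs}^{\rm rel}_1$ (for instance, outside the exceptional divisors of $\mu$ and away from the coordinate hyperplanes $(\tau_j=0)$).

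The key computation is carried out upstairs on $\V$. Pulling back the logarithmic connection $\nabla_i$ along the log morphism $q_i:(\V,D_i)\to(\lb_i,Y)$ (and composing with $\Omega_\V(\log D_i)\hookrightarrow \Omega_\V(\log D)$), one obtains a twisted logarithmic connection $\widetilde{\nabla}_i:p_\V^*L_i\to p_\V^*L_i\otimes\Omega_\V(\log D)$, together with the pulled-back forms $\beta_{i,I}(a)\in H^0(\V,\Omega_\V(\log D)\otimes p_\V^*L^{\ep_i+\delta_i})$ corresponding to $\nabla_{i,I}(a)$. Applying $q_i^*$ to the relation $p_i^*(\tau^{rI})\nabla_{i,I}(a)=\nabla_i(p_i^*a\cdot p_i^*\tau^{(r+1)I})$ and summing over $|I|=\delta_i$ yields
\[
\sum_{|I|=\delta_i}p_\V^*(\tau^{rI})\,\beta_{i,I}(a_{i,I}) \;=\; \widetilde{\nabla}_i\bigl(p_\V^*\sigma_i(\af_i)\bigr).
\]
On the other hand the tautological identity $\nabla_i(T_{\lb_i})=0$ pulls back to $\widetilde{\nabla}_i(T_i)=0$, so since $\Sigma_i(\af_i)=T_i-p_\V^*\sigma_i(\af_i)$ we obtain $\widetilde{\nabla}_i(p_\V^*\sigma_i(\af_i))=-\widetilde{\nabla}_i(\Sigma_i(\af_i))$.

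The decisive observation is that $\widetilde{\nabla}_i(\Sigma_i(\af_i))$, viewed as a section of $\O_{\V_1(D)}(1)\otimes\pi_\V^*p_\V^*L^{\ep_i+\delta_i}$ via the tautological identification, vanishes on $Z_{\af,1}(D_\af)$: in any local trivialization of $p_\V^*L_i$ where $\Sigma_i(\af_i)$ is represented by a function $f$, one has $\widetilde{\nabla}_i(\Sigma_i(\af_i))=df+f\,\omega$ for some logarithmic $1$-form $\omega$, and for any $\xi\in T_{Z_\af}(-\log D_\af)\subset T_\V(-\log D)|_{Z_\af}$ both $f$ and $df(\xi)$ vanish since $\xi$ is tangent to $(f=0)=E_{\af_i}\supset Z_\af$. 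Consequently the identity above restricts to
\[
\sum_{|I|=\delta_i}\bigl(\pi_\V^*p_\V^*\tau^{rI}\bigr)\,\nabla'_{i,I}(a_{i,I})\bigr|_{Z_{\af,1}(D_\af)}\;=\;0.
\]

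It remains to transfer this vanishing to $\widehat{Z}_{\af,1}(D_\af)$. Pulling the identity back via $\mu$ and using $\mu^*\nabla'_{i,I}(a_{i,I})=\sigma_{F_i}\cdot\widehat{\nabla}_{i,I}(a_{i,I})$, where $\sigma_{F_i}$ is the canonical section of $\O(F_i)$, we deduce
\[
\sigma_{F_i}\cdot\sum_{|I|=\delta_i}\mu^*\bigl(\pi_\V^*p_\V^*\tau^{rI}\bigr)\,\widehat{\nabla}_{i,I}(a_{i,I})\Big|_{\widehat{Z}_{\af,1}(D_\af)}=0.
\]
By \cref{part2}, the variety $\widehat{Z}_{\af,1}(D_\af)$ is the strict transform of $Z_{\af,1}(D_\af)$, so it is not contained in the support of $F_i$; the section $\sigma_{F_i}$ is therefore generically nonzero on $\widehat{Z}_{\af,1}(D_\af)$, and we may cancel it on a dense open set. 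Since the resulting identity is an equality of sections of a line bundle, it extends to all of $\widehat{Z}_{\af,1}(D_\af)$, and as $\af$ varies in $\A^{\rm def}$ this is exactly the statement $\widehat{\varPsi}(\widehat{\zs}^{\rm rel}_1)\subset\ys$. The main technical care needed is to keep track of all the sheaf-theoretic identifications (pullback of log connections, passage from $\Omega_\V(\log D)\otimes(\cdot)$ to $\O_{\V_1(D)}(1)\otimes(\cdot)$, and the twist by $F_i$ introduced by $\mu$), and to ensure that the final identity, which is only manifestly established away from the exceptional locus, extends by continuity.
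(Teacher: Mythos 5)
Your proof is correct and follows essentially the same strategy as the paper's: both exploit the tautological identity $\nabla_i(T_{\lb_i})=0$ together with the vanishing of the logarithmic connection applied to the defining section along the hypersurface it cuts out, then extend the resulting identity by density. The only cosmetic difference is that you perform the key computation directly on $\V$ and then cancel the exceptional factor $\sigma_{F_i}$ upstairs on $\widehat{\V}_1(D)$, whereas the paper invokes \cref{regular} and Diagram \eqref{Diagram} to reduce to a computation on $E^i\subset\lb_i$; also note a small slip, $\widetilde{\nabla}_i(\Sigma_i(\af_i))$ is a section twisted by $p_\V^*L_i=p_\V^*L^{\ep_i+(r+1)\delta_i}$, not by $p_\V^*L^{\ep_i+\delta_i}$, but this does not affect the vanishing argument.
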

\begin{proof}
We will prove that for any \(\af\in \A^{\rm def}\) one has \(\widehat{\varPsi}(\widehat{Z}_{\af,1}(D_{\af}))\subset \ys\). Recall the notation of Section \ref{construction} and Section \ref{sse:ModifiedConnection}. Fix \(\af=(\af_1,\dots, \af_n)\in \A^{\rm def}\). 
For any \(i\in \{1,\dots, n\}\) let us define 
\[E^i:=(T_{\lb_i}-p_i^*\sigma_i(\af_i)=0)\subset \lb_i.\]
Observe that by definition of \(Z_{\af}\), one has \([{}^tdq_i]\circ\nu_i(\widehat{Z}_{\af,1}(D_{\af}))\subset [{}^tdq_i]\big({Z}_{\af,1}(D_{i,\af})\big)\subset E_1^i(Y|_{E^i})\). Consider the map \(\varPsi^{i}:\lb_{i,1}(Y)\to |\O_{\P^n}(\delta_i)|\times \P^n\) defined by 
\[\varPsi^{i}([\xi]):=(\varPhi_i(\af_i,[\xi]),[\tau_0(y)^r,\dots,\tau_n(y)^r])\]
where \(y=p_i\circ \pi_{i}([\xi])\). From Diagram \eqref{Diagram} we are reduced to prove that for any \(i\in \{1,\dots, n\}\)  one has \(\varPsi^{i}\big(E_1^i(Y|_{E^i})\big)\subset\ys^i\) where 
\[\ys^i:=\{(P_i,[x])\in |\O_{\P^n}(\delta_i)|\times \P^n\ |\ P(x)=0\}.\] To see this, we apply the tautological relation \eqref{eq:TautologicalVanishing} to the connection \(\nabla_i\) associated to \(T_{\lb_i}\) and obtain
\[\nabla_i(T_{\lb_i})=0\in H^0(\lb_i,\Omega_{\lb_i}(\log Y)).\]
When restricted to  \(E^i\), this implies that 
\[0=\nabla_i(T_{\lb_i})=-\nabla_i(p_i^*\sigma_i(\af_i))=-\sum_{I\in \ib(\delta_i)} \nabla_{i,I}(a_{i,I})p_i^*(\tau^{rI})\]
when viewed as element in \(H^0\big(E^i,\Omega_{E^i}(\log Y|_{E^i})\otimes L^{\varepsilon_i+\delta_i}\big)\). From this it follows that for any \([\xi]\in E_1^i(Y|_{E^i})\) one has \(\varPsi^i([\xi])\in \ys^i\), hence the result.
\end{proof}
We will need to consider a slightly modified version of \(\ys\) on each strata \(Y_J\). For any \(J\subset \{0,\dots,n\}\) we define 
\[\ys_J:=\ys\cap (\G\times \P_J)\subset \G\times \P^n.\]
Define also \(\widehat{\V}_{1,J}(D)^{\circ}:=\widehat{\V}_1(D)^{\circ}\cap (p_{\V}\circ\pi_{\V}\circ \mu)^{-1}(Y_J)\) and \(\widehat{\zs}_{1,J}^{\rm rel}:=\widehat{\zs}_{1}^{\rm rel}\cap\widehat{\V}_{1,J}(D)^{\circ}\). Observe that by the very definition of \(Y_J\) and \(\P_J\), we have that for any \(y\in Y_J\), \([\tau_0(y)^r,\dots, \tau_n(y)^r]\in \P_J\). Therefore \(\widehat{\varPsi}(\A^{\rm def}\times \widehat{\V}_{1,J}(D)^{\circ})\subset \G\times \P_J\) and therefore, combining this with Proposition \ref{prop:Restriction} we deduce that \(\widehat{\varPsi}\) factors through \(\ys_J\) when restricted to \(\widehat{\zs}_{1,J}^{\rm rel}\):
\begin{equation}\label{eq:restriction}
\widehat{\varPsi}|_{\widehat{\zs}^{\rm rel}_{1,J}}:\widehat{\zs}^{\rm rel}_{1,J}\to \ys_J\subset \G\times \P_J.\end{equation}
Observe that the morphism \(\rho_J:\ys_J\to \G\) induced by the projection  \(\G\times \P_J\to \G\) is generically finite. Let us define  \[\G_J^{\infty}:=\big\{\Delta\in \G \ |\ \dim(\rho_J^{-1}(\Delta))>0\big\}\subset \G,\]
which is a closed subset of \(\G\).

Now we are ready to prove the following lemma:
\begin{lem}\label{exceptional locus}
	   Assume that   $\delta_i\geqslant 4n-1$ for any $i\in\{1,\dots,\c\}$. Then there exists a non-empty Zariski open subset $\A^{\rm nef}\subset \A^{\rm def}$ such that for any $J\subset \{0,\ldots,n\}$, 
	$$
	{\widehat{\varPhi}}^{-1}(\G_J^\infty)\cap (\A^{\rm nef}\times \widehat{\V}_{1,J}(D)^{\circ} )=\varnothing.
	$$
\end{lem}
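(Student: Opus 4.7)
The plan is to carry out a dimension count on the incidence variety
$B_J := \widehat{\varPhi}^{-1}(\G_J^\infty) \cap (\A^{\rm def} \times \widehat{\V}_{1,J}(D)^{\circ})$
for each $J \subset \{0,\ldots,n\}$, and show that its image under $\pr_1$ is a proper closed subvariety of $\A^{\rm def}$. Since there are only finitely many $J$, taking $\A^{\rm nef} := \A^{\rm def} \setminus \bigcup_J \overline{\pr_1(B_J)}$ then yields the desired non-empty Zariski open subset. When $k := n - \#J = 0$ there is nothing to prove, since $\P_J$ is a single point and $\G_J^\infty = \varnothing$; so we may restrict to $k \geq 1$.

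The key step is a linearization-plus-surjectivity argument fiberwise over $\hat{w} \in \widehat{\V}_{1,J}(D)^{\circ}$. For a local lift representing $\hat{w}$, the map $\widetilde{\varPhi}_i(\cdot, \hat{w}) : \A_i \to H^0(\P^n, \O_{\P^n}(\delta_i))$ given by $\af_i \mapsto \sum_I \widehat{\nabla}_{i,I}(a_{i,I})(\hat{w}) x^I$ is $\cb$-linear. Composing with $\res_J^{\delta_i}$ and applying \cref{rank} through the chain of reductions encoded in the diagram \eqref{Diagram}, we obtain a linear map $\A_i \to H^0(\P_J, \O_{\P_J}(\delta_i))$ of full rank $\binom{k+\delta_i}{k}$. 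Taking products, $\A$ surjects linearly onto $\prod_i H^0(\P_J, \O_{\P_J}(\delta_i))$, which descends to a dominant rational map $\A \dashrightarrow \G_J := \prod_i |\O_{\P_J}(\delta_i)|$. By construction, $\G_J^\infty$ is the preimage under the natural restriction $\G \to \G_J$ of its analogue $\widetilde{\G}_J^\infty \subset \G_J$, so surjectivity yields
$\codim_\A \widehat{\varPhi}_{\hat{w}}^{-1}(\G_J^\infty) \geq \codim_{\G_J} \widetilde{\G}_J^\infty$.

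The main obstacle is then to bound $\codim_{\G_J} \widetilde{\G}_J^\infty$ from below by a quantity strictly exceeding the upper estimate $\dim \widehat{\V}_{1,J}(D)^{\circ} \leq 3n + k - 1$ (obtained from $\dim \V = 2n$, $\dim \V_1(D) = 4n - 1$, and $\dim Y_J = k$). This is precisely the content of Benoist's codimension theorem \cite{Ben11} for the locus of degenerate complete intersections in $\P^k$ cut out by $n \geq k$ hypersurfaces of degrees $\delta_i \geq 4n-1$: the hypothesis on the $\delta_i$ makes the codimension of $\widetilde{\G}_J^\infty$ in $\G_J$ comfortably larger than $3n+k-1$ for every $k \in \{1,\ldots,n\}$. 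Granting this numerical comparison, fibering $B_J$ over $\widehat{\V}_{1,J}(D)^{\circ}$ gives
$\dim B_J \leq \dim \widehat{\V}_{1,J}(D)^{\circ} + \dim \A - \codim_{\G_J} \widetilde{\G}_J^\infty < \dim \A$, so $\pr_1(B_J) \subsetneq \A^{\rm def}$, which concludes the argument.
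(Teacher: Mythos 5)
Your proposal follows the same strategy as the paper's proof: a fiberwise dimension count over $\widehat{\V}_{1,J}(D)^{\circ}$, using the linearity of the affine lift $\af\mapsto\nabla_J^{\hat{w}}(\af)$, the surjectivity onto $\prod_i H^0(\P_J,\O_{\P_J}(\delta_i))$ supplied by \cref{rank}, and the codimension bound $\codim_{\widetilde{\G}_J}\widetilde{\G}_J^\infty\geqslant\min_i\delta_i+1\geqslant 4n$ from Benoist's result, followed by a union over the finitely many $J$. The only cosmetic differences are (i) you peel off the case $k=0$ explicitly, which the paper absorbs automatically since $\widetilde{\G}_J^\infty=\varnothing$ there; (ii) you invoke the slightly sharper (but unverified, and in any case unnecessary) bound $\dim\widehat{\V}_{1,J}(D)^{\circ}\leqslant 3n+k-1$, whereas the paper simply uses $\dim\widehat{\V}_{1,J}(D)^{\circ}\leqslant\dim\widehat{\V}_1(D)=4n-1$, which already beats $4n$; and (iii) you describe $\G_J^\infty$ as a preimage under the restriction $\G\dashrightarrow\G_J$, which in the projective picture is a rational map with base locus, whereas the paper sidesteps this entirely by verifying the identity $\rho_J^{-1}(\widehat{\varPhi}(\af,\hat{w}))=\tilde\rho_J^{-1}(\nabla_J^{\hat{w}}(\af))$ directly in the affine setting — you should do the same, since your actual computation does use the affine lift, and the projective phrasing in your aside is not quite accurate at the base locus.
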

\begin{proof} Take \(J\subset \{0,\dots, n\}\). 
	Fix   $\hat{w}\in \widehat{\V}_{1,J}(D)^{\circ}$ and set \(y=(p_{\V}\circ \pi_{\V}\circ \mu)(\hat{w})\in Y_J\). For any \(i\in \{1,\dots,\c\}\), define  \(\ell_i=(q_i\circ\pi_{\V,D_i}\circ \nu_i)(\hat{w})\) and let $\xi_i\in T_{\lbb_i}(-\log Y)_{\ell_i}$ such that \([\xi_i]=[{}^tdq_i]\circ\nu_i(\hat{w})\). 	Then \(p_i(\ell_i)=y \). The following commutative diagram summarizes the maps between the different spaces under consideration.
	\begin{equation*}
	\xymatrix{
		  \widehat{\V}_1(D)^{\circ} \ar^{  \mu}[d] \ar^{  \nu_i}[dr]  &   \\
	 {\V}_1(D)\ar@{-->}^-{\gamma_i}[r] \ar[dr]_-{\pi_\V} &  {\V}_1(D_i)^{\circ}  \ar[d]^-{\pi_{\V,D_i}}\ar@{->}^-{[{}^tdq_i]}[r] &  {\lb_{i,1}}(Y)\ar[d]^-{\pi_i} &\\
	& \V  \ar[r]^{q_i}\ar[dr]_-{p_\V} & \lb_i\ar[d]^-{p_i} &\\
	&& Y
	}
	\end{equation*}
	Fix a neighborhood \(U\subset Y\) of \(y\) with a fixed choice of trivialization of \(L|_U\) and consider the map (recall \eqref{restrict})
\begin{eqnarray*}
\res^{\delta_i}_J\circ \nabla_i^{\xi_i}:\A_i&\to& H^0\big(\P_J,\O_{\P_J}(\delta_i)\big)\\
\af_i&\rightarrow&\sum_{I\in \ib_J(\delta_i)}\nabla_{i,I}^{\xi_i}(a_{i,I})x^I.
\end{eqnarray*}
	Define \(\widetilde{\G}_J:=\prod_{i=1}^\c H^0\big(\P_J,\O_{\P_J}(\delta_i)\big)\) and define  
	\[\nabla_J^{\hat{w}}:=\big(\res^{\delta_1}_J\circ \nabla_1^{\xi_1},\dots, \res^{\delta_n}_J\circ \nabla_n^{\xi_n}):\A\to \widetilde{\G}_J.\]
	Consider moreover the complete intersection
	\[\widetilde{\ys}_J:=\big\{(P_1,\dots,P_\c,[x])\in \widetilde{\G}_J\times \P_J\ | \ P_1(x)=\cdots =P_\c(x)=0\big\},\]
and let us denote by \(\tilde{\rho}_J:\widetilde{\ys}_J\to \widetilde{\G}_J\) the morphism induced by the projection on the first factor.
Observe that \(\tilde{\rho}_J\) is generically finite and let us define
\[\widetilde{\G}_J^{\infty}:=\big\{\Delta\in \widetilde{\G}_J\ |\ \dim(\tilde{\rho}_J^{-1}(\Delta))>0\big\}.\]
This is a closed algebraic subset of \(\widetilde{\G}_J\). 
Let us now denote by \(\hat{\pr}_1:\A^{\rm def}\times \widehat{\V}_{1,J}(D)^{\circ}\to \A^{\rm def}\) and \(\hat{\pr}_2:\A^{\rm def}\times \widehat{\V}_{1,J}(D)^{\circ}\to \widehat{\V}_{1,J}(D)^{\circ}\) the projections on the first and second factor. Then \(\hat{\pr}_1\) induces an isomorphism 
\[\widehat{\varPhi}^{-1}(\G_J^{\infty})\cap \hat{\pr}_2^{-1}(\{\hat{w}\})\stackrel{\sim}{\to} \hat{\pr}_1\big(\widehat{\varPhi}^{-1}(\G_J^{\infty})\cap \hat{\pr}_2^{-1}(\{\hat{w}\})\big).\]
On the other hand, one has
\[ \hat{\pr}_1\big(\widehat{\varPhi}^{-1}(\G_J^{\infty})\cap \hat{\pr}_2^{-1}(\{\hat{w}\})\big)=(\nabla_J^{\hat{w}})^{-1}(\widetilde{\G}_J^{\infty})\cap \A^{\rm def}.\]
Indeed, for any \(\af\in \A^{\rm def}\), 
\[\widehat{\varPhi}(\af,\hat{w})=\big(\varPhi_1(\af_1,[\xi_1]),\dots, \varPhi_n(\af_n,[\xi_n])\big)=\big([\nabla_1^{\xi_1}(\af_1)],\dots, [\nabla_n^{\xi_n}(\af_n)]\big).\]
Therefore one has \(\rho_J^{-1}(\widehat{\varPhi}(\af,\hat{w}))=\tilde{\rho}_J^{-1}\big(\nabla_J^{\hat{w}}(\af)\big)\) and thus
\[\af\in \hat{\pr}_1\big(\widehat{\varPhi}^{-1}(\G^{\infty})\cap \hat{\pr}_2^{-1}(\{\hat{w}\})\big)\Leftrightarrow \Big(\af\in \A^{\rm def} \ \  \text{and}\ \ \dim\big(\rho_J^{-1}(\widehat{\varPhi}(\af,\hat{w}))\big)>0\Big)\Leftrightarrow \af\in (\nabla_J^{\hat{w}})^{-1}(\widetilde{\G}^{\infty}_J)\cap \A^{\rm def}.\] 
It follows from  a result of  Benoist, \cite{Ben11} Lemma 2.3 (see \cite{BD15} Section 3 for more details), that
\[\codim_{\widetilde{\G}_J}(\widetilde{\G}_J^{\infty})\geqslant \min_{1\leqslant i\leqslant c}\delta_i+1\geqslant 4n,\]
where the second inequality follows from our hypothesis on the \(\delta_i\)'s. Since \(p_i(\ell_i)=y\in Y_J \), by \cref{rank} this implies that for each \(i\in \{1,\dots, n\}\) the linear map \(\res^{\delta_i}_J\circ\nabla_{i}^{\xi_i}\) is surjective, and we obtain that 
\begin{eqnarray*}\dim\big((\nabla_J^{\hat{w}})^{-1}(\widetilde{\G}_J^{\infty})\big)&=&\dim \A+\dim\widetilde{\G}_J^{\infty}-\dim \widetilde{\G}_J\\
&=&\dim\A-\codim_{\widetilde{\G}_J}(\widetilde{\G}_J^{\infty})\leqslant \dim \A-4n.\end{eqnarray*}
Therefore we have 
\[\dim \big( \widehat{\varPhi}^{-1}(\G_J^{\infty})\cap \hat{\pr}_2^{-1}(\{\hat{w}\})\big)\leqslant \dim \A-4n.\]
Since this holds for any \(\hat{w}\in \widehat{\V}_{1,J}(D)^{\circ}\), and since \(\dim(\widehat{\V}_{1,J}(D)^{\circ})\leqslant \dim\big(\widehat{\V}_{1}(D)\big)=4n-1<4n\)
we obtain 
\[\dim\big(\widehat{\varPhi}^{-1}(\G_J^{\infty})\cap \hat{\pr}_2^{-1}(\widehat{\V}_{1,J}(D)^{\circ})\big)\leqslant \dim \A-4n+\dim \widehat{\V}_{1,J}(D)^{\circ}<\dim \A.\]
It follows that \(\widehat{\varPhi}^{-1}(\G_J^{\infty})\cap \hat{\pr}_2^{-1}(\widehat{\V}_{1,J}(D)^{\circ})\) does not dominate \(\A\), and thus there exists an non-empty Zariski open subset \(\A_J\subset \A^{\rm def}\) such that 
\[\widehat{\varPhi}^{-1}(\G_J^{\infty})\cap\big(\A_J\times \widehat{\V}_{1,J}(D)^{\circ}\big)=\varnothing.\]
It then suffices to take \(\A^{\rm nef}=\bigcap_{J\subset \{0,\dots,n\}}\A_J\).

\end{proof}
\subsection{Proof of the main results}\label{proof of main} For \(\G\) as above, and for \(a_1,\dots,a_n\in \mathbb{Z}\) we write
\[\O_{\G}(a_1,\dots, a_\c):=\O_{|\O_{\P^n}(\delta_1)|}(a_1)\boxtimes \cdots \boxtimes \O_{|\O_{\P^n}(\delta_\c)|}(a_\c).\]
From Nakamaye's Theorem on the augmented base locus, we know that for any \(J\subset \{0,\dots,n\}\) and for any \(a_1,\dots,a_\c>0\) and has
\[\mathbf{B}_+(\rho_J^*\O_{\G}(a_1,\dots, a_\c))={\rm Exc}(\rho_J)\subset \rho_J^{-1}(\G_J^{\infty}),\]
where \({\rm Exc}(\rho_J):=\big\{y\in \ys\ | \ \dim_y(\rho_J^{-1}(\rho_J(y)))>0\big\}\)  is certainly contained in \(\rho_J^{-1}(\G_J^{\infty})\). Recall that \(\rho_J:\ys\to \G\) is the map induced by the projection on the first factor \(q_1:\G\times \P^n\to \G\). Let us also denote by \(q_2:\G\times \P^n\to \P^n\) the projection on the second factor.

%Recall that $\ys$ is the variety defined by 
%$$
%\ys:=\{(P_1,\ldots,P_c,[z])\in P(\cb^{\ib_1})\times \cdots\times P(\cb^{\ib_c})\times \pb^N | \forall 1\leq i\leq c, P_i([z])=0\},
%$$
%$\ys_J:=\ys\cap\big(\gf\times\pb_J\big)\subset \gf\times\pb^N$ and $p_J:\ys_J\rightarrow \gf$ is the natural projection map.   
%Let ${\rm pr}_1:\ys \rightarrow \gf$, ${\rm pr}_2:\ys \rightarrow \pb^{N}$ be the projection maps. If $c\geq N$, ${\rm pr}_1$ is a generically finite to one morphism. 	Define a group homeomorphism
%\begin{eqnarray*}
%	\ls:\zbb^c&\rightarrow& {\rm{Pic(\gf)}}\\
%	 (a_1,\ldots,a_c)&\mapsto& \oc_{P(\cb^{\ib_1})}(a_1)\boxtimes\cdots\boxtimes \oc_{P(\cb^{\ib_c})}(a_c)
%\end{eqnarray*}
%which is moreover an isomorphism.  
In \cite{Den17} the second named author established the following ``effective \emph{Nakamaye theorem}'' (which we present here in a weaker version):
\begin{thm}\label{nakmaye type}
	For any $\c$  positive integers $\delta_1,\ldots,\delta_\c\in \mathbb{N}^{*}$, if $a_i\geqslant  b_i:=\delta_i^{-1}\prod_{j=1}^{\c}\delta_j$ for each $i$, then for any $J\subset\{0,\ldots,n\}$, the base locus of the  line bundle $q_1^*\O_{\G}(a_1,\ldots,a_\c)\otimes q_2^*\oc_{\pb^{n}}(-1)|_{\ys_J}$ satisfies
\begin{eqnarray}\label{nakmaye}
	{\rm Bs}\Big(q_1^*\O_{\G}(a_1,\ldots,a_\c)\otimes q_2^*\oc_{\pb^{n}}(-1)|_{\ys_J}\Big)\subset \rho_J^{-1}(\G_{J}^{\infty}).
\end{eqnarray}
%	here we recall that $G^\infty_J$ is the set of points in $\mathbf{G}$ such that the fiber of  $p_J:\ys_J\rightarrow \gf$ is not a finite set.
\end{thm}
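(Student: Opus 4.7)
The plan is to construct explicit global sections of $\mathcal{L} := q_1^*\O_\G(a_1,\ldots,a_c) \otimes q_2^*\O_{\pb^n}(-1)|_{\ys_J}$ and show they cut out (at most) $\rho_J^{-1}(\G_J^\infty)$, using multivariate resultant theory in the spirit of Benoist \cite{Ben11}. First I would reduce the problem to a pointwise statement: the inclusion \eqref{nakmaye} is equivalent to producing, for every $\Delta_0=(P_1^{(0)},\ldots,P_c^{(0)})\in \G\setminus \G_J^\infty$ and every point $[x^{(0)}]$ in the finite set $\rho_J^{-1}(\Delta_0)\subset \P_J$, a global section of $\mathcal{L}$ not vanishing at $(\Delta_0,[x^{(0)}])$. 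It is at this point that the hypothesis $\Delta_0\notin \G_J^\infty$ becomes essential, since it guarantees that $[x^{(0)}]$ is an isolated common zero.

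Next, I would use the classical $u$-resultant. To $c$ forms $(P_1,\dots,P_c)$ of degrees $(\delta_1,\dots,\delta_c)$ on $\P_J$, this construction associates a polynomial $R_u(P_1,\dots,P_c)$ which is a homogeneous form of multi-degree $(b_1,\dots,b_c)$ in the coefficients of $(P_1,\dots,P_c)$ and of degree $\prod_j\delta_j$ in an auxiliary linear form $u=\sum u_i x_i$ on $\P_J$. Its fundamental Poisson-type factorisation,
\[
R_u(P_1,\dots,P_c)\;=\;\prod_{[x^{(\ell)}]\in \rho_J^{-1}(\Delta)} u(x^{(\ell)}),
\]
holds precisely when $\Delta\notin \G_J^\infty$, and vanishes identically in $u$ otherwise. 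Unpacking the dependence on $u$, the $u$-resultant yields, for each monomial $x^I$ of degree $1$ in the coordinates on $\P_J$, a canonical section
\[
s_I \;\in\; H^0\bigl(\G,\O_\G(b_1,\dots,b_c)\otimes \rho_{J,*}(q_2^*\O_{\pb^n}(-1)|_{\ys_J})\bigr),
\]
i.e., by the projection formula, a section $\tilde s_I$ of $q_1^*\O_\G(b_1,\dots,b_c)\otimes q_2^*\O_{\pb^n}(-1)|_{\ys_J}$ whose value at a point $(\Delta,[x^{(\ell)}])$ with $\Delta \notin \G_J^\infty$ is proportional to the $I$-coordinate of $[x^{(\ell)}]$.

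To pass from the boundary case $a_i=b_i$ to the general case $a_i\geq b_i$, it is enough to multiply the $\tilde s_I$ by arbitrary global sections of $q_1^*\O_\G(a_1-b_1,\dots,a_c-b_c)$, which is base-point free on $\G$ since each $a_i-b_i\geq 0$; this manifestly does not enlarge the base locus. Since the $\tilde s_I$'s, evaluated at a given $(\Delta_0,[x^{(0)}])$ with $\Delta_0\notin \G_J^\infty$, reproduce (up to a nonzero scalar) the homogeneous coordinates of $[x^{(0)}]$, not all of them can vanish simultaneously, so these sections indeed separate such a point from $\rho_J^{-1}(\G_J^\infty)$.

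The main obstacle is the clean verification that the $u$-resultant construction globalizes as claimed and that its non-vanishing locus on $\ys_J$ is exactly $\ys_J\setminus \rho_J^{-1}(\G_J^\infty)$; this is where Benoist's framework \cite{Ben11}, together with the universality and multi-homogeneity properties of the resultant, does the heavy lifting. The numerology $a_i\geq b_i=\delta_i^{-1}\prod_j\delta_j$ is optimal for this approach, as $(b_1,\dots,b_c)$ is precisely the multi-degree of the classical resultant in the coefficients of the $P_i$'s, and the $-1$ twist on $\P_J$ is exactly what is absorbed by linearity of the $u$-resultant in the auxiliary variable.
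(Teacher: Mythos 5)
The paper does not actually prove this statement: \Cref{nakmaye type} is quoted directly from the second author's earlier work \cite{Den17}, where it is established via a different section-construction closer in spirit to the ``generalized Cramer's rule'' identities of \cite{BD15}. So there is no internal proof to compare against; I will assess your argument on its own.

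Your $u$-resultant strategy is natural for the case $J=\varnothing$, where the system is ``square'': $c=n$ forms on $\pb^n$, plus one auxiliary linear form $u$, is exactly the input of the Macaulay resultant, and the multi-degree bookkeeping $(b_1,\dots,b_c)$ in the coefficients of the $P_i$ and $\prod_j\delta_j$ in $u$ is correct. But there is a real gap for $J\neq\varnothing$, which is the generic situation here. On $\P_J$ the dimension is $k=n-\#J<n=c$, so $P_1,\dots,P_c$ is an \emph{over-determined} system: the common zero locus in $\P_J$ is generically empty, $\rho_J$ is only generically finite onto a proper subvariety of $\G$, and there is no single $u$-resultant of $c$ forms on $\P^k$ with a Poisson-type product expansion. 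Any replacement (resultants of $k$-subsets, Chow forms of the image cycle, etc.) changes the multi-degree away from $(b_1,\dots,b_c)$, so the numerology you invoke as ``optimal for this approach'' no longer matches the statement. Your write-up treats $\P_J$ uniformly, and this is precisely where the argument breaks.

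There is a second, smaller issue even for $J=\varnothing$: the coefficients of $R_u(\Delta,u)=\prod_\ell u(x^{(\ell)})$ are \emph{symmetric} functions of the common zeros $x^{(\ell)}$, so ``unpacking the dependence on $u$'' cannot directly produce sections whose value at a given $(\Delta_0,[x^{(0)}])$ is the coordinate of that particular zero. To get sections of $q_1^*\O_\G(b_1,\dots,b_c)\otimes q_2^*\O_{\pb^n}(-1)|_{\ys_J}$ one has to exploit the fact that on $\ys_J$ the linear form $u(x)$ divides $R_u(\Delta,u)$, consider the quotient polynomial in $u$, and take its coefficients; one then still has to argue that these \emph{a priori} only meromorphic sections extend regularly across $\rho_J^{-1}(\G_J^\infty)$, which is not automatic without a codimension or normality argument. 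None of this is addressed. In short, your proposal captures the correct flavor (and would likely go through in the square case with the division trick), but it does not yet yield a proof of the statement as formulated, and in particular does not recover the result from \cite{Den17} that the paper relies on.
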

 We are now going to prove the following result.
\begin{thm}\label{thm:maintechnical} Same notation as above. Suppose  \(\ep_1,\dots, \ep_\c\geqslant 1\) and \(\delta_1,\dots,\delta_\c\geqslant 4n-1\). Write \(b_i={\delta_i^{-1}}\prod_{j=1}^\c\delta_j\) for all \(i\in \{1,\dots, \c\}\).
If \[r>\sum_{i=1}^\c b_i(\ep_i+\delta_i),\]
then, for any \(\af\in \A^{\rm nef}\) the pair \((Z_{\af},D_{\af})\) satisfies Property \eqref{eq:star}.  
\end{thm}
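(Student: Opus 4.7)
The plan is to produce an effective $\mu$-exceptional $\qb$-divisor $F$ on $\widehat{Z}_{\af,1}(D_\af)$ such that $\mu^{*}\O_{Z_{\af,1}(D_\af)}(1)\otimes\O(-F)$ is ample, which is precisely property \eqref{eq:star}. Set $N := \sum_{i=1}^{c} b_i$ and $M := r - \sum_{i=1}^{c} b_i(\ep_i+\delta_i)$, which is strictly positive by the hypothesis on $r$. Denote by $W := \widehat{\varPsi}|_{\widehat{Z}_{\af,1}(D_\af)} \colon \widehat{Z}_{\af,1}(D_\af) \to \ys$ the restriction given by \cref{prop:Restriction}. First I would identify the pullback line bundle: using that each $\widehat{\varPhi}_i^{*}\O(1)$ equals $\mu^{*}(\O_{X_1(D)}(1) \otimes \pi^{*} p_{\V}^{*} L^{\ep_i+\delta_i}) \otimes \O(-F_i)$ by the very definition of the $\widehat{\nabla}_{i,I}$, together with the fact that $q_2 \circ W$ factors through the composition $Y \to \P^n$, $y \mapsto [\tau_0(y)^r,\ldots,\tau_n(y)^r]$ which pulls back $\O_{\P^n}(1)$ to $L^r$, one computes
\[
\lc \;:=\; W^{*}\bigl(q_1^{*}\O_{\G}(\mathbf{b}) \otimes q_2^{*}\O_{\P^n}(-1)\bigr) \;\cong\; \mu^{*}\O(N) \otimes \mu^{*}\pi^{*} L^{-M} \otimes \O\Bigl(-\sum_{i=1}^{c} b_i F_i\Bigr).
\]

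Next I would show $\mathbf{B}_+(\lc) \subset \mathrm{Exc}(\mu)$. For every $J \subset \{0,\ldots,n\}$, \cref{nakmaye type} applied with $a_i = b_i$ gives that the base locus of $(q_1^{*}\O(\mathbf{b}) \otimes q_2^{*}\O(-1))|_{\ys_J}$ is contained in $\rho_J^{-1}(\G_J^{\infty})$, while \cref{exceptional locus} ensures $W(\widehat{Z}_{\af,1,J}(D_\af)) \subset \ys_J \setminus \rho_J^{-1}(\G_J^{\infty})$ because $\af \in \A^{\rm nef}$. Pulling back sections from each $\ys_J$ via the factorization \eqref{eq:restriction} therefore yields base-point-freeness of $\lc$ on every stratum $\widehat{Z}_{\af,1,J}(D_\af)$. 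Patching these stratum-wise sections together---either by extending from $\ys_J$ to $\ys$ through a Serre-type vanishing argument for sufficiently high powers of $\lc$, or by inducting over the stratification of $Y$ by the $Y_J$'s---then yields the global inclusion $\mathbf{B}_+(\lc) \subset \mathrm{Exc}(\mu)$.

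Finally, I would tensor $\lc$ with the nef line bundle $\mu^{*}\pi^{*} L^{M}$ to obtain $\mu^{*}\O(N) \otimes \O(-\sum b_i F_i)$, whose augmented base locus is still contained in $\mathrm{Exc}(\mu)$. Ampleness then follows by Nakamaye's theorem: for subvarieties $V \not\subset \mathrm{Exc}(\mu)$ one uses the control on $\mathbf{B}_+$, while for $V \subset \mathrm{Exc}(\mu)$ one uses that $-\sum b_i F_i$ is $\mu$-relatively ample, a standard consequence of the explicit log resolution constructed in \cref{prop:ResolutionAlgo}. Dividing by $N$ and setting $F := \tfrac{1}{N}\sum_{i} b_i F_i$ gives the desired effective $\mu$-exceptional $\qb$-divisor. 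The hard part will be the patching argument in the second step, since sections on the proper closed subvarieties $\ys_J \subset \ys$ do not automatically lift to $\ys$, making the transition from stratum-wise base-point-freeness to a global statement on $\mathbf{B}_+(\lc)$ the central technical difficulty.
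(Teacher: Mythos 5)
Your identification of the pullback bundle $\lc$ and the way you invoke \cref{nakmaye type} and \cref{exceptional locus} match the paper, but the overall argument diverges in two places, both of which are genuine gaps. First, you aim for the global statement $\mathbf{B}_+(\lc)\subset\mathrm{Exc}(\mu)$ by patching stratum-wise base-point-freeness, and as you flag yourself, sections on $\ys_J$ do not lift to $\ys$ and the strata $\widehat{\zs}^{\rm rel}_{1,J}$ are only locally closed, so neither of your suggested workarounds is more than a hope; in addition, controlling $\mathbf{B}_+(\lc)$ in this way presupposes that $\lc$ is big, which is never established (it carries a negative twist $L^{-M}$). The paper avoids all of this by proving only that $\lc$ is \emph{nef}, via curve intersections: for an irreducible curve $C\subset\widehat{Z}_{\af,1}(D_\af)$ there is a unique $J$ with $C\cap\widehat{\zs}^{\rm rel}_{1,J}$ open dense in $C$, so $\widehat{\varPsi}(C)\subset\ys_J$ (closure), $\widehat{\varPsi}(C)\not\subset\rho_J^{-1}(\G_J^{\infty})$ by \cref{exceptional locus}, hence $\widehat{\varPsi}(C)\not\subset\mathrm{Bs}$ and $\lc\cdot C\geqslant0$. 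No patching, no extension of sections, no bigness needed.

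Second, and more importantly, your final step is missing the ingredient that actually produces ampleness. Tensoring the nef $\lc$ with the nef (but not ample) pullback $\mu^{*}\pi^{*}L^{M}$ yields something nef, not ample; and the assertion that $-\sum b_iF_i$ is $\mu$-relatively ample for these specific numerically determined coefficients $b_i$ is not a ``standard consequence'' of \cref{prop:ResolutionAlgo} -- the $F_i$ are defined by $\mu^{*}\js_i=\O(-F_i)$, not the atomic exceptional divisors of the blow-up sequence, and $\mu$-relative ampleness of this particular combination would need its own proof and has no reason to hold. What the paper does instead is bring in a second source of positivity: since $p_{\V}|_{Z_\af}\colon Z_\af\to Y$ is a biholomorphism, $\O_{Z_{\af,1}(D_\af)}(1)\otimes\pi^{*}L^{\alpha}$ is ample for $\alpha\gg0$, hence $\mu^{*}\bigl(\O(1)\otimes L^{\alpha}\bigr)\otimes\O(-F'')$ is ample for a sufficiently small $\mu$-exceptional $\qb$-divisor $F''$. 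Adding $\tfrac{\theta}{\alpha}$ times this ample bundle (with $\theta=\tfrac{1}{b}(r-b')>0$) to the nef $\lc^{\otimes 1/b}$ cancels the $L^{-\theta}$ twist and gives ampleness. Consequently the correct $F_{\af}$ is a convex combination of $F'=\sum b_iF_i$ and $F''$, not $\tfrac{1}{N}\sum b_iF_i$ alone. Without this ample piece the passage from nef to ample does not close.
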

\begin{proof} 
Let \(\af\in \A^{\rm nef}\). Let us first prove that 
\begin{equation}\label{eq:Goal1}
\widehat{\varPsi}^*\big(q_1^*\O_{\G}(b_1,\dots,b_\c)\otimes q_2^*\O_{\P^n}(-1)|_{\ys_J}\big)|_{\widehat{Z}_{\af,1}(D_{\af})} \ \ \text{is nef}.
\end{equation}
In order to prove that a line bundle is nef, it suffices to show that for any irreducible curve, its intersection with the given line bundle is non-negative. Let $C\subset \widehat{Z}_{\af,1}(D_{\af})$ be an irreducible curve. There exists a unique $J\subset \{0,\ldots,n\}$ such that $C^\circ:=C\cap \widehat{\zs}_{1,J}^{\rm rel}$ is a non-empty Zariski open subset of $C$. It follows from \eqref{eq:restriction} that $\widehat{\varPsi}(C^\circ)\subset \ys_J$, and since $\ys_J$ is closed in $\ys$, it follows that $\widehat{\varPsi}(C)\subset \ys_J$. 
By Lemma \ref{exceptional locus}, we have $\widehat{\varPsi}(C)\not\subset \rho_J^{-1}(\G_J^\infty)$ in view of our assumption of the \(\delta_i\)'s. Then,  from \eqref{nakmaye}  one obtains that 
	$$\widehat{\varPsi}(C)\not\subset {\rm Bs}\big(q_1^*\O_{\G}(b_1,\dots,b_\c)\otimes q_2^*\O_{\P^n}(-1)|_{\ys_J}\big).$$
Thus one has
$$
\widehat{\varPsi}^*\big(q_1^*\O_{\G}(b_1,\dots,b_\c)\otimes q_2^*\O_{\P^n}(-1)|_{\ys_J}\big)\cdot C\geqslant \big(q_1^*\O_{\G}(b_1,\dots,b_\c)\otimes q_2^*\O_{\P^n}(-1)|_{\ys_J}\big)\cdot \widehat{\varPsi}(C)\geqslant 0.
$$
Since $C$ is arbitrary, \eqref{eq:Goal1} is thus proved. 

Form the definition of the map \(\widehat{\varPsi}\) and the definition of the exceptional divisors \(F_1,\dots, F_n\), see \eqref{eq:DefF_i}, we obtain that, with the notation \(b:=\sum_{i=1}^\c b_i\),  \(b':=\sum_{i=1}^\c b_i(\ep_i+\delta_i)\) and \(F'=\sum_{i=1}^\c b_iF_i\)
\[\widehat{\varPsi}^*\big(q_1^*\O_{\G}(b_1,\dots,b_\c)\otimes q_2^*\O_{\P^n}(-1)|_{\ys_J}\big)|_{\widehat{Z}_{\af,1}(D_{\af})}=\big(\mu^*(\O_{\V_1(D)}(b)\otimes \pi_{\V}^*p_{\V}^*L^{b'-r})\otimes \O_{\widehat{\V}_1(D)}(-F')\big)\big|_{\widehat{Z}_{\af,1}(D_{\af})}.\]
Taking the \(\frac{1}{b}\)-th power of this last line bundle, and writing   \(\theta=\frac{1}{b}(r-b')\), we obtain that the \(\qb\)-line bundle 
\[\mu^*\big(\O_{\V_1(D)}(1)\otimes \pi_{\V}^*p_{\V}^*L^{-\theta}\big)\otimes \O_{\widehat{\V}_1(D)}\Big(-\frac{1}{b}F'\Big)\Big|_{\widehat{Z}_{\af,1}(D_{\af})}\]
is nef.  Our hypothesis on \(r\) precisely implies that \(\theta>0\). On the other hand, since \(\p_{\V}|_{Z_{\af}}:Z_{\af}\rightarrow Y\) is a  biholomorphism,  there exists \(\alpha \in \nb^*\)  and another \(\mu\)-exceptional effective \(\qb\)-divisor \(F''\) such that the \(\qb\)-line bundle \(\mu^*(\O_{\widehat{\V}_1(D)}(1)\otimes \pi_{\V}^*p_{\V}^*L^\alpha)\otimes \O_{\widehat{\V}_1(D)}(-F'')|_{\widehat{Z}_{\af,1}(D_\af)} \) is ample. Therefore  the line bundle 
\[\mu^*\Big(\O_{\V_1(D)}\Big(1+\frac{\theta}{\alpha}\Big)\Big)\otimes \O_{\widehat{\V}_1(D)}\Big(-\frac{1}{b}F'-\frac{\theta}{\alpha} F''\Big)\Big|_{\widehat{Z}_{\af,1}(D_{\af})}\]
is ample as sum of an ample and a nef line bundle. Let us denote by \(F_{\af}=\Big(\frac{\alpha}{b(\theta+\alpha)}F'+\frac{\theta}{\theta+\alpha} F''\Big)\big|_{\widehat{Z}_{\af,1}(D_{\af})}\), \(L_\af:= p_\V^*L|_{Z_\af} \), \(\pi_\af:Z_{\af,1}(D_\af)\rightarrow Z_\af \) the natural projection map, and \(\mu_\af:\widehat{Z}_{\af,1}(D_\af)\rightarrow  {Z}_{\af,1}(D_\af) \) the restriction of \(\mu\). By \Cref{part2}, \(\mu_\af\) is the minimal resolution of \( {Z}_{\af,1}(D_\af)\) and one has
\[
\mu^*\big(\O_{\V_1(D)}(1)\big)\otimes \O_{\widehat{\V}_1(D)}\Big(-\frac{\alpha}{b(\theta+\alpha)}F'-\frac{\theta}{\theta+\alpha} F''\Big)\Big|_{\widehat{Z}_{\af,1}(D_{\af})}=\mu_\af^*\big(\O_{Z_{\af,1}(D_\af)}(1)\big)\otimes \O_{\widehat{Z}_{\af,1}(D_\af)} (-F_{\af}),
\]
such that \(F_{\af}\) is \(\mu_\af\)-exceptional.  Whence the result.
\end{proof}
We can now prove \cref{maintechnical}.
\begin{proof}[Proof of \cref{maintechnical}]
Take \(\af \in \A^{\rm nef}\). Since \((Z_{\af},D_{\af})\) is biholomorphic to the pair \((Y,H_{\af})\) and that  \((Z_{\af},D_{\af})\) satisfies property \eqref{eq:star}, it follows that \((Y,H_{\af})\) satisfies property \eqref{eq:star}. Since Property \eqref{eq:star} is Zariski open, it follows that for general \(H_1\in |L^{m_1}|,\dots, H_n\in |L^{m_n}|\), writing \(H=\sum_{i=1}^nH_i\) the pair \((Y,H)\) satisfies \eqref{eq:star}, and therefore, by Corollary \ref{implies ample}, \((Y,H)\) has almost ample logarithmic cotangent bundle. 

Let us now see that this implies that \(Y\setminus H\) is hyperbolically embedded. First observe  that the complement \(Y\setminus H\) is Brody hyperbolic, which means that \(Y\setminus H\) doesn't contain any entire curve. This follows from a well known result in the theory of entire curves (see for instance \cite{Dem97,Kob98,McQ98,NW14}), which states that for any non-constant holomorphic morphism \(f:\cb\to Y\setminus H\), one has \(f(\cb)\subset \pi_Y(\mathbf{B}_+(\O_{Y_1(H)}(1)))\). Since in our case \( \pi_Y(\mathbf{B}_+(\O_{Y_1(H)}(1))= D\), such entire curves cannot exist. Moreover, for any \(I=\{i_1,\dots, i_r\}\subset\{1,\dots, n\}\) we can apply our result to the pair \((D_I,D(I^{\complement})|_{D_I})\) where we recall that \(D_I=D_{i_1}\cap\cdots \cap D_{i_r}\) and \(D(I^{\complement})=\sum_{i\in I^{\complement}}D_i\). Therefore the previous argument implies that for any such \(I\), the variety \(D_I\setminus D(I^{\complement})\) is Brody hyperbolic. A result of Green \cite{Gre77} now insures that this implies that \(Y\setminus H\) is hyperbolically embedded.
\end{proof}

\section{ Optimality on the number of components}\label{apprendix a}
In this section we mention two remarks concerning the number of components \(c\) of the divisor \(D\) in the case \(Y=\P^n\). 
First we show that if the ambient variety is \(\P^n\) then  our main result is optimal on the number of components. And then we recall that the logarithmic irregularity of the pair \((\P^n,D)\) is \(c-1\).

The following vanishing result,  is a  generalization to the logarithmic setting of a vanishing result for symmetric differential forms which was first established by Sakai \cite{Sak79} (see also \cite{BR90,Sch92}). We provide here a logarithmic adaptation of the arguments of \cite{Sak79} and  \cite{Bro11}.  The one component case \(c=1\) is a special case of a result proven in \cite{Div09} and the case \(n=2\) was already established in \cite{ElG03}.
%In this appendix, we will show that the condition  $c\geqslant n$ in Theorem \ref{main} is optimal, based on the  techniques in \cite{Bro11a}. In the case for complete intersection of hypersurfaces in $\mathbb{P}^n$, this result was already proved by Diverio in \cite{Div08} and further generalized by the first author in \cite{Bro11a}.  Our  result can be seen as a  logarithmic version of  \cite[Theorem 1]{Div08}:
\begin{proposition}\label{optimal c}
	Let $D:=\sum_{i=1}^{c}D_i$ be a simple normal crossing divisor in $\pb^n$. If \(c<n\) then for any \(m\geqslant 1\)
	\begin{eqnarray}\label{vanish}
	H^0\big(\pb^n,S^m\Omega_{\mathbb{P}^n}(\log D)\otimes \oc_{\pb^n}(-1)\big)=0
	\end{eqnarray}
In particular, $\Omega_{\mathbb{P}^n}(\log D)$ can not be almost ample.
\end{proposition}
\begin{proof}
For every \( i\in \{1,\dots, c\}\), set \(d_i=\deg D_i\)  and let us denote by $s_i\in H^0\big(\mathbb{P}^n,\oc_{\pb^n}(d_i)\big)$ the homogeneous polynomials defining the hypersurfaces $D_i$. One can view  $\pb^n\subset \pb^{n+c}$ as the subspace in $\mathbb{P}^{n+c}$ defined by $z_{n+1}=\ldots=z_{n+c}=0$. Set $D'=\sum_{i=1}^{c}D'_{i}$ with $D'_{i}:=\{[z]\in \pb^{n+c}\ |\ z_{n+i}=0\}$  for any $i\in\{ 1,\ldots, c\}$. Let $H_1,\ldots,H_c$ be $c$ hypersurfaces in $\mathbb{P}^{n+c}$ defined by $\{z_{n+i}^{d_i}-s_i(z_1,\ldots,z_n)=0\}_{i\in\{1,\ldots,c\}}$. Since \(D\) is simple normal crossing, the sum of these hypersurfaces is a simple normal crossing divisor and  therefore there intersection $X=H_1\cap\cdots\cap H_c$ is a smooth complete intersection. Let us define  \(E:=D'|_X\). Then $(X,E)$ is a sub-log pair of $(\pb^{n+c},D')$, and the natural projection $p:\pb^{n+c}\dashrightarrow \pb^n$ induces a  cover $\pi:X\rightarrow \mathbb{P}^n$ ramified along  $E$. One has a natural inclusion induced by \({}^td\pi|_X\),
		$$
		  H^0\big(\pb^n, S^m\Omega_{\pb^n}(\log D)\otimes \oc_{\pb^n}(-1)\big)\hookrightarrow H^0\big(X,S^m\Omega_X(\log E)\otimes \oc_{X}(-1)\big).
	$$
It then suffices to prove the vanishing of the right hand side.
	Set $N:=n+c$. Observe that one also has a logarithmic Euler exact sequence in this situation: 
	$$
	0\rightarrow  \Omega_{\pb^N}(\log D')\rightarrow \tilde{\Omega}_{\pb^N}(\log D')\rightarrow \oc_{\pb^N}\rightarrow 0,
	$$
where we denote	\[\tilde{\Omega}_{\pb^N}(\log D'):=\oc_{\pb^N}(-1)^{\oplus n+1}\oplus \oc_{\pb^N}^{\oplus c}.\]
This logarithmic Euler exact sequence is induced by the usual Euler exact sequence by observing that one has a morphism of log pairs  \(p:(\cb^{N+1}\setminus \{0\},\tilde{D}')\to (\P^N,D')\), where \(p:\cb^{N+1}\setminus \{0\}\to \P^N\) denotes the canonical projection,   \(\tilde{D}'_i:=p^*D'_i\) for any \(i\in \{1,\dots,c\}\), and  \(\tilde{D}'=\sum_{i=1}^c\tilde{D}'_i\). 
It then suffices to observe that the inclusion \(\Omega_{\cb^{N+1}\setminus \{0\}}\hookrightarrow \Omega_{\cb^{N+1}\setminus \{0\}}(\log \tilde{D}')\) descends on \(\P^N\) to the morphism \(\O_{\pb^N}^{\oplus N+1}\to\oc_{\pb^N}^{\oplus n+1}\oplus \oc_{\pb^N}^{\oplus c}(1)\) given by \((\xi_0,\dots, \xi_N)\mapsto (\xi_0,\dots, \xi_n,z_{n+1}\xi_{n+1},\dots, z_{N}\xi_{N})\) and to twist this map by \(\O_{\P^N}(-1)\) in order to make it fit in the following diagram
\[\xymatrix{
	0\ar[r] & \Omega_{\pb^N}\ar[r]\ar[d]& \oc_{\P^N}^{\oplus N+1}(-1)\ar[r]\ar[d] &\oc_{\pb^N}\ar[r]\ar[d]& 0\\
	0\ar[r] & \Omega_{\pb^N}(\log D')\ar[r]& \tilde{\Omega}_{\pb^N}(\log D')\ar[r] &\oc_{\pb^N}\ar[r]& 0.
}
\]

	Let us denote by $N^*=\bigoplus_{i=1}^c\O_{X}(-d_i)$ the conormal bundle of $X$ in $\pb^N$. Since $(X,E)$ is a sub-log manifold of $(\pb^N,D')$, one has the following exact sequence:
	$$
	0\rightarrow N^*\rightarrow \Omega_{\pb^N}(\log D')|_X\rightarrow \Omega_X(\log E)\rightarrow 0,
	$$
	which induces a locally free sheaf \(\tilde{\Omega}_X(\log E)\) sitting in the following commutative diagram
	\[
	\begin{tikzcd} 
	&		& 0 \arrow{d}
	& 0\arrow{d} &  &  &  \\
	&  & N^*\arrow{r}{=}\arrow{d}   & N^*\arrow{d}  	& &  & \\
	& 0 \arrow{r} 
	& \Omega_{\pb^N}(\log D')|_{ X} \arrow{r}\arrow{d}
	& \tilde{\Omega}_{\pb^N}(\log D')|_{ X}\arrow{r}\arrow{d}
	& \oc_X \arrow{r}\arrow{d}
	& 0
	& \\
	& 0\arrow{r}
	& \Omega_{X}(\log E)\arrow{r}\arrow{d}
	&  \tilde{\Omega}_X(\log E)\arrow{r}\arrow{d}
	& \oc_X\arrow{r}
	& 0\\
	& & 0 &\  0. &  
	\end{tikzcd}
	\]
	We obtain an inclusion 
	\[H^0(X,S^m\Omega_X(\log E)\otimes \O_X(-1))\subset H^0(X,S^m\tilde{\Omega}_X(\log E)\otimes \O_X(-1))\]
	and we are therefore reduced to prove that \(H^0(X,S^m\tilde{\Omega}_X(\log E)\otimes \O_X(-1))\) vanishes.

Taking symmetric powers of the middle vertical exact sequence in the above diagram, and twisting it by \(\O_{\P^N}(-1)\) we obtain a resolution 
\[0\to \mathscr{E}_m\to \cdots \to \mathscr{E}_1\to \mathscr{E}_0\to S^m\tilde{\Omega}_X(\log E)\otimes \O_{X}(-1)\to 0,\]
 where
 \[\mathscr{E}_i:=\Lambda^iN^*\otimes S^{m-i}\tilde{\Omega}_{\P^N}(\log D')\otimes \O_{\P^N}(-1).\]
%  $$
%0\rightarrow \Lambda^{m}N^*\rightarrow \Lambda^{m-1}N^*\otimes \tilde{\Omega}_{\pb^N}(\log D')|_{ X}\rightarrow \cdots\rightarrow N^*\otimes S^{m-1}\tilde{\Omega}_{\pb^N}(\log D')|_{ X}\rightarrow S^{m}\tilde{\Omega}_{\pb^N}(\log D')|_{ X}\rightarrow S^{m}{\tilde{\Omega}_X(\log D)}\rightarrow 0
% $$
% 
Therefore the cohomology of \(S^m\tilde{\Omega}_X(\log E)\otimes \O_X(-1)\) can be computed be the hypercohomology spectral sequence \((E_r^{p,q})\) of the complex \(\mathscr{E}_\bullet\):
\[E_1^{p,q}=H^q(X,\mathscr{E}_{-p})\Rightarrow  H^{p+q}\big(X,S^m\tilde{\Omega}_X(\log E)\otimes \O_X(-1)\big).\]
 Observe that since \(N^{*}\) is of rank \(c\), one has \(\mathscr{E}_{p}=0\) for all \(p> c\).  Moreover, for any \(p\geqslant 0\), the vector bundle \(\mathscr{E}_p\) is a sum of duals of ample  line bundles. Therefore, by Kodaira's vanishing theorem, we obtain \(H^q(X,\mathscr{E}_p)=0\) for any \(q<n\). But  since \(c<n\), this implies that \(E_{1}^{p,q}=0\) if \(p+q= 0\), and therefore we obtain the desired vanishing.
\end{proof}
By contrast, one has the following result. This property is well known (see for instance \cite{Nog81}), but we provide here a short proof for the reader's convenience.
\begin{proposition}
Let \(D=\sum_{i=1}^c D_i\) be a simple normal crossing divisor in \(\P^n\). Then
\[h^0(\P^n,\Omega_{\P^n}(\log D))=c-1\]
\end{proposition}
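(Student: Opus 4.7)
The plan is to exploit the residue exact sequence and compute everything through the associated long exact sequence in cohomology. Specifically, applying \eqref{eq:Residues} with $I=\{1,\ldots,c\}$ (so that $I^{\complement}=\varnothing$ and $D(I^{\complement})=0$) gives
\[
0\to \Omega_{\mathbb{P}^n}\to \Omega_{\mathbb{P}^n}(\log D)\xrightarrow{\mathrm{Res}}\bigoplus_{i=1}^{c}\mathcal{O}_{D_i}\to 0.
\]
Taking $H^0$ and using that $H^0(\mathbb{P}^n,\Omega_{\mathbb{P}^n})=0$ together with $H^0(D_i,\mathcal{O}_{D_i})=\mathbb{C}$ (each smooth hypersurface $D_i$ in $\mathbb{P}^n$ is connected by Lefschetz for $n\geqslant 2$; the case $n=1$ is a trivial direct computation since $\Omega_{\mathbb{P}^1}(\log D)=\mathcal{O}_{\mathbb{P}^1}(c-2)$), the first few terms of the long exact sequence read
\[
0\to H^0\bigl(\mathbb{P}^n,\Omega_{\mathbb{P}^n}(\log D)\bigr)\to \mathbb{C}^{c}\xrightarrow{\partial}H^1(\mathbb{P}^n,\Omega_{\mathbb{P}^n})\cong \mathbb{C}.
\]
It therefore suffices to check that the connecting map $\partial$ is surjective, for then $h^0\bigl(\mathbb{P}^n,\Omega_{\mathbb{P}^n}(\log D)\bigr)=\dim\ker\partial=c-1$.

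The main step is the identification of $\partial$. I would first handle the one-component case: for a single smooth divisor $D_i\subset \mathbb{P}^n$ of degree $d_i$, the exact sequence $0\to \Omega_{\mathbb{P}^n}\to \Omega_{\mathbb{P}^n}(\log D_i)\to \mathcal{O}_{D_i}\to 0$ has extension class equal to the Atiyah class of $\mathcal{O}_{\mathbb{P}^n}(D_i)$ restricted to $D_i$. Consequently the connecting map $H^0(D_i,\mathcal{O}_{D_i})\to H^1(\mathbb{P}^n,\Omega_{\mathbb{P}^n})$ sends $1$ to $c_1(\mathcal{O}_{\mathbb{P}^n}(D_i))=d_i h$, where $h$ is the generator of $H^1(\mathbb{P}^n,\Omega_{\mathbb{P}^n})$. (A quick \v{C}ech verification suffices: if $\{U_\alpha\}$ is an open cover on which $D_i$ is cut out by $F_\alpha$ with transition functions $g_{\alpha\beta}=F_\alpha/F_\beta$ for $\mathcal{O}(D_i)$, then the local lifts $\tfrac{dF_\alpha}{F_\alpha}\in \Omega_{\mathbb{P}^n}(\log D_i)(U_\alpha)$ of $1\in \mathcal{O}_{D_i}$ differ by $\tfrac{dg_{\alpha\beta}}{g_{\alpha\beta}}$, which is precisely a \v{C}ech cocycle representative of $c_1(\mathcal{O}(D_i))$.) By functoriality of the residue sequence with respect to the decomposition $D=\sum D_i$, this computation assembles to give
\[
\partial(a_1,\ldots,a_c)=\sum_{i=1}^{c}a_i\,c_1\bigl(\mathcal{O}_{\mathbb{P}^n}(D_i)\bigr)=\Bigl(\sum_{i=1}^c a_i d_i\Bigr)h.
\]

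Since each $d_i=\deg D_i\geqslant 1$, the linear form $(a_1,\ldots,a_c)\mapsto \sum a_i d_i$ is non-zero and hence surjective onto $\mathbb{C}$, so $\partial$ is surjective and $\dim\ker\partial=c-1$, which yields the claim. The only delicate point is the identification of $\partial$ with the Chern-class map; everything else is formal manipulation with the long exact sequence and standard vanishing on $\mathbb{P}^n$.
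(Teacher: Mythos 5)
Your proof is correct and takes essentially the same approach as the paper: both start from the residue exact sequence, use $H^0(\P^n,\Omega_{\P^n})=0$, and identify the connecting map with the first Chern class to conclude it has rank one. The only difference is that you spell out the \v{C}ech/Atiyah-class identification of $\partial$ and the connectedness of the $D_i$ (including the $n=1$ case), where the paper simply says ``by a diagram chase in \v{C}ech cohomology.''
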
 
\begin{proof}
From the residue  exact sequence  and the vanishing \(H^0(\P^n,\Omega_{\P^n})=0\) we get an exact sequence
\[0\to H^0(\P^n,\Omega_{\P^n}(\log D))\to \bigoplus_{i=1}^c H^0(D_i,\O_{D_i})\cong \cb^c\stackrel{\delta}{\to} H^1(\P^n,\Omega_{\P^n}).\]
Denoting \(e_i\in H^0(D_i,\O_{D_i})\) the constant section equal to \(1\), we can compute \(\delta(e_i)\) by a diagram chase in Cech cohomology and obtain that 
\[\delta(e_i)=c_1(\O_{\P^n}(D_i))=\deg D_i \cdot c_1(\O_{\P^n}(1)).\]
In particular \(\rank (\delta)=1\) and therefore \(h^0(\P^n,\Omega_{\P^n}(\log D))=c-1\).
\end{proof}

%\begin{rem}
%	In \eqref{vanish}, the negative twist $\oc_{\pb^n}(-1)$ is optimal. Indeed, assume that  $c\geq 2$, and $s_1,s_2$ the canonical section of $E_1,E_2$. At any open set of $U\subset \pb^n$ where $s_1,s_2$ can be trivialized by $s_{1,U},s_{2,U}\in \Gamma(U,\oc_U)$, define
%	$$
%	\omega_{12,U}:=d_2\cdot d\log s_{1,U}-d_1\cdot d\log s_{2,U}\in \Gamma\big(U,\Omega_{\pb^n}(\log E_1+E_2)\big)
%	$$
%	and we observe that they can be glued together to obtain a global logarithmic form
%	$$
%	\omega_{12}\in H^0\big(\pb^n,\Omega_{\pb^n}(\log E_1+E_2)\big)\subset H^0\big(\pb^n,\Omega_{\pb^n}(\log E)\big).
%	$$
%	In particular, $H^0\big(\pb^n,\Omega_{\pb^n}(\log E)\big)\neq 0$.
%\end{rem}
\section{A resolution algorithm}\label{se:Resolution} 
In this section we give a proof of \cref{prop:ResolutionAlgo}. To do so we will provide an explicit resolution algorithm for some particular configurations of subvarieties in any ambient complex manifold. 
%While we could have used some general resolution algorithms are the ones appearing in \cite{Kol07} in order to obtain \cref{part1,part3}, it is unclear to us how to obtain \cref{part2} this way. However, even  without \cref{part2}, one could have modified some of our arguments in \cref{se:Main} in order to obtain our main result. Nevertheless, because the combinatorics of the different obstructions induced by the residue maps can be dealt with by hand, it seemed more natural to us to provide an explicit description of resolution algorithm satisfying \cref{prop:ResolutionAlgo}.
\subsection{Simple ideal sheaves}\label{sec:simple sheaves}
Let us start by a resolution procedure for some special configurations of linear subspaces in an affine space. This is purely algebraic and could be performed over any field.
Fix an integer \(m\geqslant 2\) and let $\A^{m}$ be the dimension \(m\) affine space over \(\cb\). We say that an ideal  \(J\subset \cb[x_1,\dots,x_m]\) is simple if it is of the form
	\begin{eqnarray}\label{simple sheaf}
	J=\langle x_1,\ldots,x_p,x_{p+1}x_{r+1},x_{p+2}x_{r+2},\ldots,x_{r}x_{2r-p}\rangle.
	\end{eqnarray}
Since $J=\sqrt{J}$, one has $J=I\big(\mathbb{V}(J)\big)$, where $\mathbb{V}(J)$ is the (not necessarily irreducible) variety defined by $J$. In fact, one has 
	 \[J=\bigcap_{K\subset \{1,\dots, r-p\}} \langle x_1,\ldots,x_p,x_{p+k_1},\dots x_{p+k_s},x_{r+\ell_1},\dots,x_{r+\ell_{r-p-s}}\rangle\]
	 Where \(K=(k_1,\dots, k_s)\) and \(\{1,\dots, r-p\}\setminus K=\{\ell_1,\dots, \ell_{r-p-s}\}\).
	 %Therefore one can write $\mathbb{V}(J)=\bigcup_{i=1}^{N}V_i$ as a union of irreducible subvarieties of the same dimension. Here each of the \(V_i\) if of the form \(\mathbb{V}(J_K)\) where \(J_K=\langle x_1,\ldots,x_p,x_{p+k_1},\dots x_{p+k_s},x_{r+\ell_1},\dots,x_{r+\ell_{r-p-s}}\rangle\).  
	 We say that a subvariety of \(\ab^m\) is simple if its defining ideal is of the form \(\langle x_{i_1},\dots,x_{i_n}\rangle\) for some \(\{i_1,\dots, i_n\}\subset \{1,\dots, m\}\).
Therefore the above relation shows that \(\mathbb{V}(J)=\bigcup_{i=1}^NV_i\) is the union of simple varieties of same dimensions.
\begin{lem}\label{blowing-up}
Take a simple ideal \(J\subset\cb[x_1,\dots, x_m] \) as \eqref{simple sheaf} and write  \(\mathbb{V}(J)=\bigcup_{i=1}^NV_i\)  as above. Let \(i\in \{1,\dots,m\}\). Let $\mu:\widetilde{X}\rightarrow \ab^m$ be the blow-up of $V_i$. Let us write  $E_i:=\mu^{-1}(V_i)$ the exceptional divisor and $\tilde{V}_j$ the strict transform of $V_j$ for every $j\neq i$.  Then $\mu^*J=\oc_{\widetilde{X}}(-E_i)\cdot J_i$, where $J_i$ denotes the ideal sheaf of the variety $\bigcup_{j\neq i}\tilde{V}_j$. Moreover, one can cover $\tilde{X}$ by open sets which are all isomorphic to \(\A^m\) such that on every such open set, the ideal $J_i$ is either a simple ideal or the trivial ideal \(\O(\A^m)\).
\end{lem}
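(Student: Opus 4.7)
The plan is to work entirely in explicit affine charts on $\widetilde{X}$. Since $J$ is radical, the decomposition $\mathbb{V}(J)=\bigcup_K V_K$ recalled in the excerpt holds, and after a harmless reindexing of the variables one may assume that the center being blown up is $V_i=V_{\emptyset}=\mathbb{V}(x_1,\ldots,x_p,x_{r+1},\ldots,x_{2r-p})$. Its defining ideal has exactly $r$ generators, so $\widetilde{X}$ is naturally covered by $r$ standard affine charts $U_a$, indexed by $a\in\{1,\ldots,p,r+1,\ldots,2r-p\}$ and each isomorphic to $\A^m$: on $U_a$, the variable $x_a$ plays the role of the base coordinate (so $E_i=(x_a=0)$) and every other generator $x_b$ of $I(V_i)$ is replaced by $x_b=x_a y_b$.

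The core computation is then a case analysis on the type of chart. For $a\in\{1,\ldots,p\}$, the generator $x_a\in J$ pulls back to $x_a$ itself, so after extracting $\oc_{\widetilde{X}}(-E_i)$ the residual ideal already contains $1$ and is the unit ideal $\oc(\A^m)$. For $a=r+k$ with $k\in\{1,\ldots,r-p\}$, I pull back every generator of $J$ and factor out the common $x_a$: the linear $x_j$ with $j\leq p$ contribute $y_j$, the diagonal $x_{p+k}x_{r+k}$ contributes the linear $x_{p+k}$, and the remaining $x_{p+\ell}x_{r+\ell}$ (for $\ell\neq k$) contribute $x_{p+\ell}y_{r+\ell}$. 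The residual ideal is thus
\[\langle y_1,\ldots,y_p,\,x_{p+k},\,x_{p+\ell}y_{r+\ell}:\ell\neq k\rangle,\]
which is again simple, with $p+1$ linear generators and $r-p-1$ paired bilinear generators in pairwise disjoint variables. This simultaneously establishes the factorization $\mu^{*}J=\oc_{\widetilde{X}}(-E_i)\cdot J_i$ and the last sentence of the statement about the shape of $J_i$ on an affine cover.

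It then remains to identify the residual ideal $J_i$ with the ideal sheaf of $\bigcup_{j\neq i}\tilde V_j$. Each $V_j$ with $j\neq i$ corresponds to some nonempty $K\subset\{1,\ldots,r-p\}$ and is cut out by a simple ideal containing at least one of the $x_{p+k}$; a direct chart-by-chart computation of $\mu^{*}I(V_j)$, followed by extraction of the $E_i$-component, identifies the strict transform $\tilde V_j$ inside each $U_a$. A short inspection then checks that the intersection of these ideals over $j\neq i$ reproduces, on each chart, exactly the residual ideal computed above. Since both sides are radical on the affine cover, this local matching upgrades to a global equality of ideal sheaves.

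The main obstacle I expect is not conceptual but bookkeeping: one has to keep track of which $V_j$ are absorbed into $E_i$ in each chart, which is precisely the phenomenon that forces the residual ideal to be trivial on the linear charts $U_a$ with $a\leq p$ and is responsible for the paired bilinear generators reappearing on the charts $U_{r+k}$.
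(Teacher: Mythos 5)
Your proposal is correct and follows essentially the same route as the paper: a WLOG choice of which component to blow up (you take $V_\emptyset$, the paper takes the component with ideal $\langle x_1,\dots,x_r\rangle$; these are interchanged by a permutation of coordinates that preserves the shape of $J$), followed by an explicit computation of $\mu^*J$ in each of the $r$ standard affine charts, yielding the trivial ideal in the "linear" charts and a new simple ideal in the "bilinear" charts. Your chart computations agree with the paper's, and your closing remark — that the residual factor is radical on each chart and therefore, once one checks it cuts out the strict transforms set-theoretically, it must equal $J_i$ — fills in a step the paper leaves implicit.
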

\begin{proof}
	Without loss of generality, we can take $V_1=\{(x_1,\ldots,x_m)\mid x_1=x_{2}=\ldots=x_{r}=0 \}$, and it suffices to prove the lemma for the  the blow-up $\mu:\widetilde{X}\rightarrow \ab^m$ of \(V_1\).
	
	In the $x_1$-direction, one can take an affine  open set \(X_1:=\spec\cb[x_1,\tilde{x}_2,\ldots,\tilde{x}_r,x_{r+1},\ldots,x_m] \) of $\widetilde{X}$, such that the blow-up $\mu$ is given by $\mu(x_1,\tilde{x}_2,\ldots,\tilde{x}_r,x_{r+1},\ldots,x_{m})=(x_1,x_1\tilde{x}_2,\ldots,x_1\tilde{x}_r,x_{r+1},\ldots,x_{m})$, and thus 
	$$
	\mu^*J|_{X_1}= x_1\cdot \langle 1,\tilde{x}_2,\ldots,\tilde{x}_p,\tilde{x}_{p+1}x_{r+1},\tilde{x}_{p+2}x_{r+2},\ldots,\tilde{x}_{r}x_{2r-p}\rangle=\langle x_1\rangle,
	$$
	where $\{x_1=0\}$ defines the exceptional divisor $E_1$ on $X_1$. We then observe that \[J_1|_{X_1}=\cb[x_1,\tilde{x}_2,\ldots,\tilde{x}_r,x_{r+1},\ldots,x_m],\]
	which is the trivial ideal. 
	This implies that \(\tilde{V}_{i}
\bigcap X_1=\varnothing \) for each \(i=2,\ldots,N\), which also holds on the affine open set \(X_j\) in the $x_j$-direction with $j=2,\ldots,p$ by \eqref{simple sheaf}.
	
	In the $x_{r}$ direction, one can take an affine  open set \(X_r:=\spec\cb[\tilde{x}_1,\ldots,\tilde{x}_{r-1},x_{r},\ldots,x_m] \) of $\widetilde{X}$, such that the blow-up $\mu$ is given by $\mu(\tilde{x}_1,\tilde{x}_2,\ldots,\tilde{x}_{r-1},x_{r},\ldots,x_{m})=(\tilde{x}_1x_r,\tilde{x}_2x_r,\ldots,\tilde{x}_{r-1}x_r,x_{r},\ldots,x_{m})$. We thus see that
	$$
	\mu^*J|_{X_r}= x_r\cdot \langle \tilde{x}_1,\tilde{x}_2,\ldots,\tilde{x}_p,\tilde{x}_{p+1}x_{r+1},\ldots,\tilde{x}_{r-1}x_{2r-p-1},\tilde{x}_{2r-p}\rangle,
	$$
	where $\{x_r=0\}$ defines the exceptional divisor $E_1$, and we see that 
	$$J_1|_{X_r}=\langle \tilde{x}_1,\tilde{x}_2,\ldots,\tilde{x}_p,\tilde{x}_{p+1}x_{r+1},\ldots,\tilde{x}_{r-1}x_{2r-p-1},\tilde{x}_{2r-p}\rangle.$$
Here \( \mathbb{V}(J_1|_{X_r})= \bigcup_{1<j\leq N}\tilde{V}_j|_{X_r}\)	Thus $J_1|_{X_r}$ is still a simple ideal sheaf, and $\tilde{V}_j|_{X_r}$ is either empty or a simple variety for any \(j\neq 1\). This also holds on the affine open sets \(X_\ell\) in the $x_\ell$ directions with $\ell=p+1,\ldots,{r-1}$. Since \(\bigcup_{i=1}^{r}X_i=\tilde{X} \), this proves the lemma.
\end{proof}

The following lemma will be useful later.
\begin{lem}\label{disjoint}
	Let $\{V_i\}_{i=0,1,2,3}\subset \A^m$ be different simple varieties of the same dimension. Assume that $V_1\cap V_2\subset V_3$. Let $\widetilde{X}\rightarrow \ab^m$ be the blowing-up of $V_0$, with $\tilde{V}_1,\tilde{V}_2,\tilde{V}_3$ the strict transforms of \(V_1,V_2,V_3\) and $E$ the exceptional divisor. Then $\tilde{V}_1\cap\tilde{V}_2\subset\tilde{V}_3$. Moreover, if $V_1\cap V_2\subset V_0$, then $\tilde{V}_1\cap \tilde{V}_2=\varnothing$.
\end{lem}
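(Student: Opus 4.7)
The plan is to encode simple varieties combinatorially and then do chart-by-chart bookkeeping on the blowup. Write each simple variety as $V_\ell = \mathbb{V}(x_j : j \in S_\ell)$ for some subset $S_\ell \subset \{1,\dots,m\}$; since all four have the same dimension $k$, all the $|S_\ell|$ equal the common codimension $m-k$. Under this dictionary, the hypothesis $V_1 \cap V_2 \subset V_3$ becomes $S_3 \subset S_1 \cup S_2$, and the extra hypothesis $V_1\cap V_2\subset V_0$ becomes $S_0\subset S_1\cup S_2$.

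Next I would cover $\widetilde{X}$ by the standard affine charts $\{X_i\}_{i\in S_0}$ associated with the blowup of $V_0$, using new coordinates obtained by setting $x_{i'} = x_i\tilde{x}_{i'}$ for $i'\in S_0\setminus\{i\}$ and keeping $x_j$ for $j\notin S_0\cup\{i\}$, so that $E\cap X_i = (x_i=0)$. A direct computation of $\mu^*I(V_\ell)$ on $X_i$, followed by saturation by $x_i$, gives two cases:
\begin{itemize}
\item if $i\in S_\ell$, then the pullback lies entirely in the ideal $(x_i)$, so the strict transform $\widetilde{V}_\ell$ does not meet $X_i$;
\item if $i\notin S_\ell$, then
$$I(\widetilde{V}_\ell)|_{X_i} = \bigl\langle \tilde{x}_j : j\in S_\ell\cap S_0\bigr\rangle + \bigl\langle x_j : j\in S_\ell\setminus S_0\bigr\rangle.$$
\end{itemize}
This computation is the heart of the argument, but it is routine.

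With the local formula in hand, the containment $\widetilde{V}_1\cap\widetilde{V}_2 \subset \widetilde{V}_3$ becomes immediate on each chart $X_i$. If $i\in S_1\cup S_2$, then one of $\widetilde{V}_1,\widetilde{V}_2$ is empty on $X_i$ and there is nothing to check. Otherwise, $i\notin S_1\cup S_2$; combined with $S_3\subset S_1\cup S_2$ this forces $i\notin S_3$, so $\widetilde{V}_3$ is cut out on $X_i$ by the equations indexed by $S_3\cap S_0$ and $S_3\setminus S_0$, and these index sets are contained in $(S_1\cup S_2)\cap S_0$ and $(S_1\cup S_2)\setminus S_0$ respectively, which is exactly the defining ideal of $\widetilde{V}_1\cap\widetilde{V}_2$ on $X_i$. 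Hence $\widetilde{V}_1\cap\widetilde{V}_2\subset \widetilde{V}_3$ chart by chart.

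Finally, under the stronger assumption $V_1\cap V_2\subset V_0$, i.e.\ $S_0\subset S_1\cup S_2$, every index $i\in S_0$ satisfies $i\in S_1\cup S_2$, so on every chart $X_i$ at least one of $\widetilde{V}_1,\widetilde{V}_2$ is empty and therefore $\widetilde{V}_1\cap\widetilde{V}_2$ is empty on $X_i$. Since the $X_i$ cover $\widetilde{X}$, we conclude $\widetilde{V}_1\cap\widetilde{V}_2=\varnothing$. The only mildly delicate point is the saturation step identifying $I(\widetilde{V}_\ell)|_{X_i}$ in the case $i\notin S_\ell$; once this is nailed down the rest is purely combinatorial set inclusions among the $S_\ell$'s.
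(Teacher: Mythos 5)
Your proof is correct, and it takes a genuinely different route from the paper's. The paper argues geometrically and coordinate-freely: away from the exceptional divisor $E$ the blowup is an isomorphism so the containment is automatic, and on $E$ one identifies $E$ with $\pb(N_{V_0/\A^m})$, describes points as pairs $(x,[v])$, observes that a point of $\tilde V_\ell\cap E$ has $v$ lying in (the image of) $T_{V_\ell,x}$, and then uses linearity of the $V_\ell$'s to get the exact tangent-space identity $T_{V_1,x}\cap T_{V_2,x}=T_{V_1\cap V_2,x}\subset T_{V_3,x}$, finishing the two claims. Your argument instead encodes each simple variety by its index set $S_\ell$, covers the blowup by the standard affine charts, computes each strict-transform ideal explicitly via the saturation step, and then reduces both conclusions to transparent set inclusions among the $S_\ell$'s. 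The trade-off: the paper's approach is shorter and conceptual but leans on an implicit identification of $\tilde V_\ell\cap E$ inside $\pb(N_{V_0/\A^m})$ via tangent spaces (legitimate here because everything is linear, but left somewhat informal); your chart-by-chart computation is longer to set up but entirely explicit, makes the saturation mechanism visible, and is the kind of argument that generalizes mechanically. Both are sound; yours is arguably the more self-contained verification given that ``simple'' ultimately means a coordinate subspace anyway.
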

\begin{proof}
	Since $\mu:\widetilde{X}\setminus E\rightarrow \ab^m\setminus V_0$ is an isomorphism, it follows from $V_1\cap V_2\subset V_3$ that
	\[\big(\tilde{V}_1\cap \tilde{V}_2\setminus E\big)\subset \tilde{V}_3.\]
		Note that $E$  can naturally be seen as the projectivization of the normal bundle $N_{V_0/\ab^m}$ of $V_0$ in $\ab^m$. Then for any $\tilde{x}\in E$, one can write $\tilde{x}=(x,[v])$ with $x\in V_0$ and $v\in N_{V_0/\ab^m,x}$. If $\tilde{x}\in 	\tilde{V}_1\cap \tilde{V}_2\cap E$, one has $x\in V_1\cap V_2\cap V_0$, and $v\in T_{V_1,x}\bigcap T_{V_2,x}$. Since $V_1$ and $V_2$ are simple varieties, by the definition, one has $T_{V_1,x}\cap T_{V_2,x}=T_{V_1\cap V_2,x}\subset T_{V_3,x}$. We then conclude that $\tilde{x}\in \tilde{V}_3\cap E$, and $\tilde{V}_1\cap \tilde{V}_2\subset \tilde{V}_3$. This proves the first claim. When $V_1\cap V_2\subset V_0$, for any $x\in V_1\cap V_2$, one has $T_{V_1\cap V_2,x}\subset T_{V_0,x}$, and thus $\tilde{V}_1\cap \tilde{V}_2\cap E=\varnothing$. Since $\tilde{V}_1\cap \tilde{V}_2\setminus E\simeq V_1\cap V_2\setminus V_0=\varnothing$, one has $\tilde{V}_1\cap \tilde{V}_2=\varnothing$. This proves the second claim, hence the lemma.
\end{proof}

\subsection{Resolution of compatible systems}

\begin{dfn}\label{compatible}
Let \(X\) be a smooth (not necessarily compact) complex manifold of dimension \(m\). A finite collection of subvarieties of \(X\) is said to be a \emph{compatible system} if this collection admits an indexation of the form $\{Y_{ij}\}_{i=a,\ldots,c}^{j=1,\ldots,n_i}$ such that, denoting by  \(\js \)  the ideal sheaf of  $\bigcup_{i=a,\ldots,c}^{j=1,\ldots,n_i}Y_{ij}$,   the following conditions are satisfied.
	\begin{thmlist}
		\item \label{1} For any $j\neq \ell$, either there exists $i'<i$ and $j'$ such that $Y_{ij}\cap Y_{i\ell}\subset Y_{i'j'}$ or $Y_{ij}\cap Y_{i\ell}=\varnothing$; in particular,  $Y_{aj}\cap Y_{a\ell}=\varnothing$.
		\item \label{2} The manifold \(X\) can be covered by analytic open sets $\{U_\alpha\}$, such that  each \(U_\alpha\) is biholomorphic to some open set \(W_\alpha\subset \cb^m \) containing the \(0\in \cb^m \)  and satisfying the following. Let \(J_{i,\alpha}:=\{j\mid Y_{ij}\cap U_\alpha\neq \varnothing  \} \), then via the isomorphism \(U_\alpha\cong W_\alpha\subset \cb^m \), \(\{Y_{ij}\cap U_\alpha\}_{i=a,\ldots,c}^{j\in J_{i,\alpha}}\) extends to \(\{Y_{ij}^\alpha\}_{i=a,\ldots,c}^{j\in J_{i,\alpha}} \)	in \(\cb^m\), such that \(Y_{ij}^\alpha\) is a  simple variety and the ideal sheaf \(\js_\alpha\) defined by \(\bigcup_{i=a,\ldots,c}^{j\in J_{i,\alpha}}Y_{ij}^{\alpha} \) is a simple ideal of the form  \eqref{simple sheaf}. %In particular, \(Y_{ij}^\alpha\) is a simple variety in \(\cb^m \) with \(Y_{ij}^\alpha\cap W_\alpha\cong Y_{ij}\cap U_\alpha \) for any \( j\in J_{i,\alpha}\).
	\end{thmlist}
We say that  $i$ is the \emph{index} of $Y_{ij}$, that  $a$ is the \emph{lowest index},  that  $c-a+1$ is the \emph{length}  of this compatible system, and that \(\js\) the \emph{ideal sheaf associated to this system}.
\end{dfn}
Based on the \cref{compatible}, we have the following observation.
\begin{lem}\label{lem:observation} With the notation of \cref{compatible}.
	If \(Y_{ij}\bigcap Y_{i'j'}\subset Y_{k\ell} \), then for each \(\alpha\), one has \(Y^\alpha_{ij}\bigcap Y^\alpha_{i'j'}\subset Y^\alpha_{k\ell} \). In particular, \(\#J_{a,\alpha}\leqslant 1 \).
\end{lem}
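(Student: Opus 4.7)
The plan is to exploit the fact that each $Y_{ij}^\alpha$ is a coordinate subspace of $\cb^m$, hence irreducible and passing through the origin $0\in W_\alpha$, and to transfer the global containment from $X$ to the model space $\cb^m$ by an analytic continuation argument. First I would fix $\alpha$ and assume that both $j\in J_{i,\alpha}$ and $j'\in J_{i',\alpha}$, since otherwise the claim is vacuous. Because $Y_{ij}^\alpha$ and $Y_{i'j'}^\alpha$ are simple varieties, i.e.\ coordinate subspaces through $0$, their intersection $Z:=Y_{ij}^\alpha\cap Y_{i'j'}^\alpha$ is again an irreducible coordinate subspace containing $0$, so $\Omega:=Z\cap W_\alpha$ is a nonempty open, hence dense, subset of $Z$. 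Transporting $\Omega$ back to $U_\alpha$ via the biholomorphism identifies it with $Y_{ij}\cap Y_{i'j'}\cap U_\alpha$, which is therefore nonempty; in particular the point $p_\alpha\in U_\alpha$ corresponding to $0$ lies in $Y_{ij}\cap Y_{i'j'}\subset Y_{k\ell}$, which forces $\ell\in J_{k,\alpha}$ and makes $Y_{k\ell}^\alpha$ well-defined.

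Next I would use the hypothesis on $U_\alpha$ to obtain $\Omega\subset Y_{k\ell}\cap U_\alpha$, which translated back to $\cb^m$ reads $\Omega\subset Y_{k\ell}^\alpha\cap W_\alpha\subset Y_{k\ell}^\alpha$. Since $Y_{k\ell}^\alpha$ is closed analytic in $\cb^m$ and $\Omega$ is dense in the irreducible analytic set $Z$, analytic continuation yields the global containment $Z\subset Y_{k\ell}^\alpha$, which is the first assertion.

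For the \emph{in particular} part, the plan is to argue by contradiction: if $\#J_{a,\alpha}\geqslant 2$, there exist distinct $j,\ell\in J_{a,\alpha}$, so both $Y_{aj}^\alpha$ and $Y_{a\ell}^\alpha$ are defined. As both coordinate subspaces pass through $0$, the corresponding point $p_\alpha\in U_\alpha$ would lie in $Y_{aj}\cap Y_{a\ell}$; but the first condition of a compatible system applied at the lowest index $a$ (where no $i'<a$ exists) forces $Y_{aj}\cap Y_{a\ell}=\varnothing$, a contradiction.

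The main obstacle will be the analytic-continuation step upgrading $\Omega\subset Y_{k\ell}^\alpha$ to the global $Z\subset Y_{k\ell}^\alpha$. This is precisely where the structural content of \cref{compatible} is used: one needs both the irreducibility of $Z$ (forced by its simplicity, i.e.\ $Z$ being a coordinate subspace) and the fact that every simple subvariety $Y_{ij}^\alpha$ meets $W_\alpha$ in an open neighborhood of the distinguished origin. Without these two features the statement would be genuinely more delicate and a purely local-to-global transfer would not be available.
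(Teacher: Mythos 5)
Your proposal is correct and follows essentially the same route as the paper: restrict the global inclusion to $U_\alpha$, transport it to $W_\alpha\subset\cb^m$ via the chart, and then upgrade the containment from $W_\alpha$ to all of $\cb^m$ using that the $Y^\alpha$'s are coordinate subspaces through the origin and $W_\alpha$ is an open neighborhood of $0$. You are somewhat more careful than the paper's terse argument in two spots (noting the vacuity when $j\notin J_{i,\alpha}$ or $j'\notin J_{i',\alpha}$, and deriving $\ell\in J_{k,\alpha}$ from $p_\alpha\in Y_{k\ell}$ before invoking $Y^\alpha_{k\ell}$), and your explicit density/closure step is exactly the content the paper compresses into the phrase ``since $W_\alpha$ contains the origin and $\js_\alpha$ is a simple ideal.''
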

\begin{proof}
	By  assumption, for any \(\alpha\), one has
	\((Y_{ij}\bigcap U_\alpha)\bigcap (Y_{i'j'}\bigcap U_\alpha)\subset Y_{k\ell}\bigcap U_\alpha \). By \cref{2}, this is equivalent to \((Y_{ij}^\alpha\bigcap W_\alpha)\bigcap(Y^\alpha_{i'j'}\bigcap W_\alpha)\subset Y^\alpha_{k\ell}\bigcap W_\alpha \). Since \(W_\alpha\) contains the origin, and \(\js_{\alpha} \) is a simple ideal, from the very definition \eqref{simple sheaf} we observes that 
	\[Y_{ij}^\alpha \bigcap Y^\alpha_{i'j'} \subset Y^\alpha_{k\ell}.
	\]
	For any \(j\in J_{i,\alpha} \), \(Y_{ij}^\alpha\) contains the origin, and by \cref{1} one has \(\#J_{i,\alpha}\leqslant 1 \).
\end{proof}

Now we  define a natural log resolution of the  ideal sheaf $\js$. We first define $\mu_1:\widetilde{X}_1\rightarrow X$ to be the blow-up of $\bigcup_{j=1}^{n_a}Y_{aj}$, with $E_1$ the exceptional divisor and $\tilde{Y}_{ij}$ the strict transform of $Y_{ij}$ for $i>a$. By  \cref{1}, $\bigcup_{j=1}^{n_a}Y_{aj}$ is a disjoint union of smooth submanifold of the same dimension, and thus \(\tilde{X}_1\) is also a smooth manifold. Write \(\tilde{Y}_{ij}\) for the strict transform of \(Y_{ij}\) under the blow-up \(\mu_1\) for any \(i>a\), and \(j=1,\ldots,n_i \). Denote by $\js_1$  the ideal sheaf of the variety $\bigcup_{i=a+1,\ldots,c}^{j=1,\ldots,n_i}\tilde{Y}_{ij}$ in \(\tilde{X}_1 \).
\begin{lem}\label{lem:new system}
 If the index of \(\tilde{Y}_{ij}\) is defined to be \(i\) for any  \(i\in \{a+1,\ldots,c\}\) and any  \(j\in\{1,\ldots,n_i\}\), then family $\{\tilde{Y}_{ij}\}_{i=a+1,\ldots,c}^{j=1,\ldots,n_i}$ is a compatible system in \(\tilde{X}_1 \). Moreover
\begin{eqnarray}\label{eq:inverse}
\mu_1^*\js=\oc_{\widetilde{X}_1}(-E_1)\cdot \js_1.
\end{eqnarray}
\end{lem}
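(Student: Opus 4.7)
The plan is to verify the two axioms in \cref{compatible} for the family $\{\tilde Y_{ij}\}_{i\geqslant a+1}$ with its prescribed indexing, and then derive the ideal identity \eqref{eq:inverse}. Both claims are local, so the strategy is to pass to the charts $U_\alpha \cong W_\alpha$ provided by \cref{2}, do all the work with the simple varieties $Y_{ij}^\alpha$ in $\A^m$ using \Cref{blowing-up,disjoint,lem:observation}, and then glue. The first useful observation is that by \cref{1} applied at level $i=a$ the centers $Y_{a1},\dots, Y_{an_a}$ are pairwise disjoint smooth submanifolds, so $\mu_1$ is a smooth blow-up; moreover \cref{lem:observation} tells us that in any chart $U_\alpha$ at most one component of level $a$ appears, so locally $\mu_1$ is the blow-up of a single simple subvariety of $\A^m$.

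Next I would take care of condition \cref{2} for the new family together with the identity \eqref{eq:inverse}. Fix a chart $U_\alpha$ with $\js_\alpha$ simple as in \eqref{simple sheaf}; by the previous paragraph the center of the local blow-up is either empty (in which case $\mu_1$ is an isomorphism over $U_\alpha$ and there is nothing to do) or exactly one simple variety $Y^\alpha_{aj_0}$. Apply \cref{blowing-up} to this situation: it produces an open cover of $\mu_1^{-1}(U_\alpha)$ by charts each isomorphic to $\A^m$ on which the ideal $\js_1$ extends to a simple or trivial ideal, the strict transforms $\tilde Y^\alpha_{ij}$ ($i\geqslant a+1$) extend to simple varieties (or become empty), and at the same time $\mu_1^*\js_\alpha = \O(-E_1)\cdot \js_1$ holds. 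Taking the union of all these local charts as $\alpha$ varies produces the cover of $\tilde X_1$ required by \cref{2} and, since the identity $\mu_1^*\js = \O_{\tilde X_1}(-E_1)\cdot \js_1$ is local on $\tilde X_1$, it holds globally.

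It remains to check \cref{1} for the new indexing, which is where \cref{disjoint} is decisive. Fix $i\geqslant a+1$ and $j\neq \ell$; by \cref{1} applied to the original system, either $Y_{ij}\cap Y_{i\ell} = \varnothing$ or $Y_{ij}\cap Y_{i\ell}\subset Y_{i'j'}$ for some $i'<i$. Choose a chart $U_\alpha$ meeting $\tilde Y_{ij}\cap \tilde Y_{i\ell}$. Outside the exceptional divisor $\mu_1$ is an isomorphism, so the only question is what happens on $E_1$, which sits inside $\mu_1^{-1}(U_\alpha)$ only when $j_0\in J_{a,\alpha}$ is defined. In that chart, if $i'=a$ then necessarily $j'=j_0$ by \cref{lem:observation}, so $Y^\alpha_{ij}\cap Y^\alpha_{i\ell}\subset Y^\alpha_{aj_0}$ and the second part of \cref{disjoint} gives $\tilde Y^\alpha_{ij}\cap \tilde Y^\alpha_{i\ell}=\varnothing$; if $i'>a$ then \cref{lem:observation} again gives $Y^\alpha_{ij}\cap Y^\alpha_{i\ell}\subset Y^\alpha_{i'j'}$ and the first part of \cref{disjoint} transmits the containment to the strict transforms, yielding $\tilde Y_{ij}\cap \tilde Y_{i\ell}\subset \tilde Y_{i'j'}$ locally and hence globally. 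The main bookkeeping obstacle is precisely the behaviour of intersections on $E_1$, and this is exactly what \cref{disjoint} was designed to control; the rest is a routine chart-by-chart verification.
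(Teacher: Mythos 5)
Your proposal is correct and essentially follows the same route as the paper: pass to the charts $U_\alpha$, invoke \Cref{lem:observation} to reduce the local blow-up center to a single simple variety, use \Cref{blowing-up} to get the ideal identity and condition (ii), and use \Cref{disjoint} to propagate the containments (or disjointness) of condition (i) to the strict transforms, splitting on whether $i'=a$ or $i'>a$. The only difference is cosmetic: the paper explicitly records the possibility that $Y_{i'j'}$ simply does not meet $U_\alpha$, while you absorb it by choosing charts that meet $\tilde Y_{ij}\cap\tilde Y_{i\ell}$; both dispositions are fine.
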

\begin{proof}
 On each \(U_\alpha \), if \(J_{i,\alpha}\neq\varnothing\), define \(\mu_{1,\alpha}:Z_\alpha^1\rightarrow \cb^m \) to be the blow-up of \(\bigcup_{j\in J_{a,\alpha}}Y_{aj}^\alpha \),  let  \(E_{1,\alpha}\) be exceptional divisor and for any \(i>a\) and \(j\in J_{i,\alpha} \) let \(\tilde{Y}_{ij}^\alpha \) be the strict transform of \({Y}_{ij}^\alpha\). If \(J_{i,\alpha}=\varnothing\), set \(\mu_{1,\alpha}\) to be the identity map. Then via the isomorphism \(U_\alpha\cong W_\alpha \), one has \(\js_\alpha|_{W_\alpha}\cong \js|_{U_\alpha} \),
 \begin{eqnarray}\label{eq:isomorphism}
 \tilde{Y}_{ij}^\alpha|_{\mu_{1,\alpha}^{-1}(W_\alpha)}\cong \tilde{Y}_{ij}|_{\mu_{1}^{-1}(U_\alpha)}
 \end{eqnarray} 
 and by \cref{2} one has the following isomorphism
\begin{eqnarray}\label{local correspondance}
\xymatrix
{ \mu_{1,\alpha}^{-1}(W_\alpha)\ar[r]^-{\mu_{1,\alpha}} \ar[d]_-{\cong} &W_\alpha\ar[d]_-{\cong}\\
	\mu_{1}^{-1}(U_\alpha)\ar[r]^-{\mu_{1}}  &U_\alpha.
}
\end{eqnarray}
 It follows from \Cref{blowing-up} that 
\[
\mu_{1,\alpha}^*\js_\alpha=\oc_{Z_\alpha^1}(-E_{1,\alpha})\cdot \js_{1,\alpha},
\]
where $\js_{1,\alpha}$ denotes the ideal sheaf of the variety $\bigcup_{i=a+1,\ldots,c}^{j\in J_{i,\alpha}}\tilde{Y}^\alpha_{ij}$.  By \eqref{local correspondance}, one has
\begin{eqnarray} \label{eq:local inverse}
\mu_1^*\js|_{U_\alpha}=\oc_{\mu_{1}^{-1}(U_\alpha)}(-E_1)\cdot \js_1|_{\mu_{1}^{-1}(U_\alpha)}.
\end{eqnarray}
and \eqref{eq:inverse} follows from that  \(U_\alpha \) is an open covering for \(X\).

For any \(i>a \) any $j\neq l$, by \cref{1} either $Y_{ij}\cap Y_{i\ell}=\varnothing$, in which one also has
\begin{eqnarray}\label{eq:empty intersection}
\tilde{Y}_{ij}\cap \tilde{Y}_{i\ell}=\varnothing;
\end{eqnarray}
or there exists $i'<i$ and $j'$ such that $Y_{ij}\cap Y_{i\ell}\subset Y_{i'j'}$. In the second case, we claim that  $\tilde{Y}_{ij}\cap \tilde{Y}_{i\ell}\subset \tilde{Y}_{i'j'}$ when \(i'>a\), and \eqref{eq:empty intersection} holds when \(i'=a\). since \(U_\alpha\) is a covering of \(X\) and since the \(Y_{ij}\)'s are irreducible, it suffices to check this on each \(\mu_1^{-1}(U_\alpha)\). By \Cref{lem:observation}, \(Y^\alpha_{ij}\cap Y^\alpha_{i\ell}\subset Y^\alpha_{i'j'}\), and from \cref{2} they are all simple varieties in \(\cb^m\). Applying \Cref{disjoint}, one has
\[\tilde{Y}^\alpha_{ij}\bigcap \tilde{Y}^\alpha_{i\ell}\subset 
\begin{cases}
  \tilde{Y}^\alpha_{i'j'} & \mbox{ if } i'>a  \mbox{ and } j'\in J_{i'\alpha} \\
\varnothing & \mbox{ if } i'=a \mbox{ or } j\notin J_{i'\alpha} 
\end{cases}
\]
The claim then follows immediately from \eqref{eq:isomorphism}. The set  $\{\tilde{Y}_{ij}\}_{i=a+1,\ldots,c}^{j=1,\ldots,n_i}$ satisfies \cref{1}.

For any \(Z_{1,\alpha}\), by the proof of \Cref{blowing-up} it can be covered by finitely many open sets which are all isomorphic to affine spaces \( \spec \cb[x_1,\ldots,x_m] \), such that the restriction of  \(\js_{1,\alpha}\) to each affine space is still a simple ideal. Since \(W_\alpha\) contains the origin, the restriction of  \(\mu_{1,\alpha}^{-1}(W_\alpha) \) to each affine space still contains the origin. From \eqref{eq:isomorphism}, this gives an open covering which verifies \cref{2}. We define \(i \) to be the index of \(\tilde{Y}_{ij}\) for any \(i=a+1,\ldots,c\) and any \(j=1,\ldots,n_i\). $\{\tilde{Y}_{ij}\}_{i=a+1,\ldots,c}^{j=1,\ldots,n_i}$ is then a compatible system in \(\tilde{X}_1 \).
\end{proof}
Observe that after the blow-up \(\mu_1:\tilde{X}_1\rightarrow X\), and with our choice of indices, the length of the compatible system is decreased by one. We then blow up $\bigcup_{j=1}^{n_{a+1}}\tilde{Y}_{a+1,j}$ to obtain $\mu_2:\widetilde{X}_2\rightarrow \widetilde{X}_1$, and continuing in this way, we obtain a sequence of blow-ups with smooth centers 
\begin{eqnarray}\label{sequence}
\widetilde{X}_{c-a+1}\xrightarrow{\mu_{c-a+1}}\widetilde{X}_{c-a}\xrightarrow{\mu_{c-a}} \cdots \xrightarrow{\mu_3}\widetilde{X}_2\xrightarrow{\mu_2} \widetilde{X}_1\xrightarrow{\mu_1} X,
\end{eqnarray}
such that  their composition $\mu:\widehat{X}:=\widetilde{X}_{c-a+1}\rightarrow X$ gives rise to a  log resolution  of $\js$. Indeed, this algorithm is blowing-up the varieties of \emph{lowest index} in each newly obtained compatible system. Since after each blow-up, the inverse image of the ideal sheaf is, up to some exceptional divisors, the ideal sheaf associates to a compatible system whose  length  is decreased by 1, we see that  after  $(c-a+1)$-blow-ups, one resolves the ideal sheaf $\js$. We say that $\mu:\widetilde{X}_{c-a+1}\rightarrow X$ the \emph{canonical log resolution} of the ideal sheaf $\js$ associated to the compatible system  $\{{Y}_{ij}\}_{i=a,\ldots,c}^{j=1,\ldots,n_i}$ in $X$.  We will now prove that this log resolution also resolves  certain subsheaves of $\js$ simultaneously. 
\begin{dfn}\label{sub-compatible}
Same notation as \cref{compatible}.  We say  that $\{Y_{ij}\}_{i=a,\ldots,b}^{j=1,\ldots,r_i}$ with $b\leqslant c $ and $r_i\leqslant n_i$ is a \emph{subsystem} of $\{Y_{ij}\}_{i=a,\ldots,c}^{j=1,\ldots,n_i}$, if it is also a compatible  system in $X$, and satisfies that 
  for any $i=a,\ldots,b$, $j=r_i+1,\ldots,n_i$, $k=i,i+1,\ldots,b$, and $\ell=1,\ldots,r_k$, either $Y_{ij}\cap Y_{k\ell}=\varnothing$, or there exists $p<i$ and $1\leqslant q\leqslant r_p$ such that
	\begin{eqnarray}\label{condition}
	Y_{ij}\cap Y_{k\ell}\subset Y_{pq}.
	\end{eqnarray}
	In particular, one has
 \begin{eqnarray}\label{independent}
 \big(\bigcup_{j=r_{a}+1,\ldots,n_a}Y_{aj}\big)\bigcap \big(\bigcup_{\substack{i=a,\ldots,b\\ j=1,\ldots,r_i}}Y_{ij}\big)=\varnothing.
 \end{eqnarray}
\end{dfn}
\begin{lem}\label{simultaneous}
Let $\{Y_{ij}\}_{i=a,\ldots,c}^{j=1,\ldots,n_i}$ be a compatible system on a quasi-projective variety \(X\), and let $\{Y_{ij}\}_{i=a,\ldots,b}^{j=1,\ldots,r_i}$ be a subsystem.   Let  $\js'$ be the ideal sheaf of  $\bigcup_{i=a,\ldots,b}^{j=1,\ldots,r_i} Y_{ij}$, then $\mu:\hat{X}\rightarrow X$ is also a log resolution for $\js'$.
\end{lem}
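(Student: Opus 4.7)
I would argue by induction on the length $\ell=c-a+1$ of the compatible system. The base case $\ell=1$ is immediate: by \cref{1} the locus $\mathbb{V}(\mathscr{J})=\bigsqcup_{j=1}^{n_a}Y_{aj}$ is a disjoint union of smooth submanifolds of the same dimension, and $\mathscr{J}'$ is the ideal of the subcollection $\bigsqcup_{j=1}^{r_a}Y_{aj}$; the blow-up $\mu=\mu_1$ of the whole union pulls $\mathscr{J}'$ back to an invertible sheaf, namely $\mathcal{O}(-\sum_{j\leq r_a}E_{1,j})$ over a neighborhood of the subsystem centers and $\mathcal{O}$ elsewhere.

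For the inductive step I would show that after $\mu_1:\widetilde{X}_1\to X$ one has a factorization $\mu_1^{-1}\mathscr{J}'=\mathcal{O}_{\widetilde{X}_1}(-F_1)\cdot\mathscr{J}''$, where $F_1$ is an effective divisor supported on $E_1$ and $\mathscr{J}''$ is the ideal sheaf of $\bigcup_{i=a+1,\ldots,b,\;j=1,\ldots,r_i}\tilde{Y}_{ij}\subset\widetilde{X}_1$. The candidate new subsystem $\{\tilde{Y}_{ij}\}_{i=a+1,\ldots,b}^{j=1,\ldots,r_i}$ would then sit inside the length $\ell-1$ compatible system on $\widetilde{X}_1$ provided by \cref{lem:new system}, and applying the inductive hypothesis to it (with the remaining blow-ups $\mu_2,\ldots,\mu_\ell$) would resolve $\mathscr{J}''$, hence $\mathscr{J}'$.

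The crux of the argument is to verify the subsystem property for $\{\tilde{Y}_{ij}\}_{i=a+1,\ldots,b}^{j=1,\ldots,r_i}$ and to establish the local factorization. For the subsystem property, given a containment $Y_{ij}\cap Y_{k\ell}\subset Y_{pq}$ with $p<i$ and $q\leq r_p$ coming from \eqref{condition}, I would work in a chart $U_\alpha$ of \cref{2}: by \cref{lem:observation} the analogous inclusion $Y_{ij}^\alpha\cap Y_{k\ell}^\alpha\subset Y_{pq}^\alpha$ holds between simple varieties, so \cref{disjoint} applies. When $p>a$ its first part yields $\tilde{Y}_{ij}\cap\tilde{Y}_{k\ell}\subset\tilde{Y}_{pq}$ with $p\in\{a+1,\ldots,i-1\}$ and $q\leq r_p$, and when $p=a$ the center of $\mu_1$ contains $Y_{aq}$ (because $q\leq r_a\leq n_a$), so its second part gives $\tilde{Y}_{ij}\cap\tilde{Y}_{k\ell}=\varnothing$; original empty intersections remain empty because $\mu_1$ is an isomorphism off $E_1$ and a common lift on $E_1$ would force a common base point. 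For the local factorization, in each chart $U_\alpha$ one has $|J_{a,\alpha}|\leq 1$ by \cref{lem:observation}; if the unique index $j_0$ lies in $\{1,\ldots,r_a\}$ a direct adaptation of the computation in \cref{blowing-up} (isolating the subsystem components among the branches of $\mathbb{V}(\mathscr{J}_\alpha)$) gives the factorization with $F_1=E_1$ locally, while if $j_0>r_a$ or $J_{a,\alpha}=\varnothing$ then \eqref{independent} ensures the subsystem is locally disjoint from the blow-up center, so $\mu_1^*\mathscr{J}'=\mathscr{J}''$ locally with no exceptional contribution.

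The main obstacle I expect is the local-to-global bookkeeping needed to patch these chart-level factorizations into a globally valid one, which is genuinely delicate because $\mathscr{J}'$ need not itself be a simple ideal sheaf in the sense of \eqref{simple sheaf} even though $\mathscr{J}$ is. Once this gluing is carried out and the new subsystem condition is verified, the induction propagates through the full sequence \eqref{sequence} and shows that the canonical log resolution $\mu:\widehat{X}\to X$ simultaneously resolves $\mathscr{J}$ and $\mathscr{J}'$.
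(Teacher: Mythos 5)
Your proposal is correct and follows essentially the same route as the paper: induction on the length of the compatible system, a first blow-up governed by \eqref{independent} and \cref{lem:new system} which factors $\mu_1^*\js'$ as an exceptional divisor times the ideal of the strict transforms, verification that the strict transforms form a subsystem via \cref{lem:observation} and \cref{disjoint}, and then the inductive hypothesis on the remaining blow-ups. The "local-to-global bookkeeping" you flag as a potential obstacle is in fact automatic, since the strict transform and the exceptional divisor are globally defined objects whose local descriptions in the charts of \cref{2} agree by construction, which is exactly what the paper's proof uses implicitly.
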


\begin{proof}
	Since $\{Y_{aj}\}_{j=1,\ldots,n_a}$ are pairwise disjoint, it follows from \eqref{independent} and \cref{lem:new system} that for the blow-up $\mu_1:\widetilde{X}_1\rightarrow X$  of  $\bigcup_{j=1}^{n_a}Y_{aj}$, one has 
	$$
	\mu_1^*\js'=\oc_{\widetilde{X}}(-E_1')\cdot \js'_1,
	$$
	where $E_1'$ is the inverse image of $\bigcup_{j=1}^{r_a}Y_{aj}$ and $\js'_1$ is the sheaf of the variety $\bigcup_{i=a+1,\ldots,b}^{j=1,\ldots,r_i}\tilde{Y}_{ij}$. By \cref{lem:new system} again we note that  $\{\tilde{Y}_{ij}\}_{i=a+1,\ldots,b}^{j=1,\ldots,r_i}$ is also a compatible system in $\tilde{X}_1$. We will prove it is also a subsystem of   $\{\tilde{Y}_{ij}\}_{i=a+1,\ldots,c}^{j=1,\ldots,n_i}$.

For any $i=a+1,\ldots,b$, $j=r_i+1,\ldots,n_i$, $k=i,i+1,\ldots,b$, and $\ell=1,\ldots,r_k$,  by \cref{sub-compatible} either $Y_{ij}\cap Y_{k\ell}=\varnothing$, or there exists $p<i$ and $1\leqslant q\leqslant r_p$ such that \(Y_{ij}\cap Y_{k\ell}\subset Y_{pq}\). As in the proof of \Cref{lem:new system}, we can prove that 
\[\tilde{Y}_{ij}\bigcap \tilde{Y}_{k\ell}\subset 
\begin{cases}
\tilde{Y}_{pq} & \mbox{ if } p>a \\
\varnothing & \mbox{ if } p=a \mbox{ or } i=a+1
\end{cases}
\]
This verifies the conditions in \cref{sub-compatible}. Thus   $\{\tilde{Y}_{ij}\}_{i=a+1,\ldots,b}^{j=1,\ldots,r_i}$ is    a subsystem of  the compatible system  $\{\tilde{Y}_{ij}\}_{i=a+1,\ldots,c}^{j=1,\ldots,n_i}$. We conclude by
induction on the length of the system of compatible subvarieties that for the composition of the blow-ups 
$$
\tilde{X}_{b-a+1}\xrightarrow{\mu_{b-a+1}}\tilde{X}_{b-a}\xrightarrow{\mu_{b-a}} \cdots \xrightarrow{\mu_2} \widetilde{X}_1\xrightarrow{\mu_1} X,
$$
it resolves the ideal sheaf $\js'$. This proves the lemma.
\end{proof}
\subsection{Functorial properties}

This resolution algorithm  is \emph{functorial} under restrictions.
\begin{lem}\label{functorial}
	Let $Z$ be a regular subvariety of $X$. Assume that $\{Y_{ij}\cap Z\}_{i=a,\ldots,c}^{j=1,\ldots,n_i}$ is also a compatible system in $Z$ if we define the index of $Y_{ij}\cap Z$ to be $i$ for each $Y_{ij}$. We emphasize here that we consider the scheme theoretic intersection \(Z\cap Y_{ij}\). Denote by $\widetilde{Z}$ the strict transform of $Z$ under $\mu$. Then the restriction of the canonical log resolution $\mu$ associated to   $\{{Y}_{ij}\}_{i=a,\ldots,c}^{j=1,\ldots,n_i}$ to $\widetilde{Z}$ is also the canonical log resolution associated to  the compatible system $\{{Y}_{ij}\cap Z\}_{i=a,\ldots,c}^{j=1,\ldots,n_i}$ in $Z$.  In particular, $\mu|_{\widetilde{Z}}$ is the log resolution of the ideal sheaf of the subvariety $\bigcup_{i=a,\ldots,b}^{j=1,\ldots,n_i}({Y}_{ij}\cap Z)$.
\end{lem}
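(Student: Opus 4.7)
The plan is to argue by induction on the length $c - a + 1$ of the compatible system, following the same order of blow-ups that defines the canonical log resolution $\mu$. The key observation is that at each step, the center of the blow-up in $X$ (namely $\bigcup_j Y_{aj}$) meets $Z$ in exactly the center of the corresponding blow-up for the induced system on $Z$ (namely $\bigcup_j (Y_{aj} \cap Z)$), and these intersections are a disjoint union of smooth subvarieties of $Z$ by \cref{1} applied to $\{Y_{ij} \cap Z\}_{i,j}$.

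The base case (length $1$) reduces to the following standard statement: if $W \subset X$ is a regular closed subscheme and $Z \subset X$ is a regular closed subscheme such that $W \cap Z$ is regular of the expected codimension in $Z$ (equivalently, $Z \not\subset W$ and $Z$ is transverse enough to $W$ so that the scheme-theoretic intersection is reduced and regular), then the strict transform $\widetilde{Z}$ of $Z$ in $\mathrm{Bl}_W X$ is canonically isomorphic to $\mathrm{Bl}_{W \cap Z} Z$. In our setting this applies to $W = \bigcup_j Y_{aj}$, and the required regularity of $W \cap Z$ is guaranteed by the hypothesis that $\{Y_{ij} \cap Z\}$ is a compatible system in $Z$, which in particular forces each $Y_{aj} \cap Z$ to be smooth of the correct codimension in $Z$ and the different $Y_{aj} \cap Z$'s to be pairwise disjoint.

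For the inductive step, I would first invoke \cref{lem:new system} twice: once in $X$, showing that $\{\widetilde{Y}_{ij}\}_{i>a}$ is a compatible system of length one less in $\widetilde{X}_1$; and once in $Z$, showing the same for $\{\widetilde{Y}_{ij} \cap \widetilde{Z}_1\}_{i>a}$ in the blow-up $\widetilde{Z}_1$ of $Z$ along $\bigcup_j (Y_{aj} \cap Z)$. Then I would check that the strict transform in $\widetilde{X}_1$ of $Y_{ij} \cap Z$ coincides with $\widetilde{Y}_{ij} \cap \widetilde{Z}_1$, so that the two induction hypotheses line up; together with the base case identification $\widetilde{Z}_1 = \widetilde{Z} \cap \mu_1^{-1}(\cdot)$, this lets us apply induction and conclude that $\mu|_{\widetilde{Z}}$ is the canonical log resolution of the ideal sheaf on $Z$.

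The main obstacle is the compatibility check between strict transforms and intersections, which must be verified locally. Using the open cover $\{U_\alpha\}$ from \cref{2}, I would work on each chart where the $Y_{ij}$ become simple varieties given by the standard simple ideal \eqref{simple sheaf}, and where $Z$ is cut out by additional coordinate conditions compatible with the simple structure (this is what the hypothesis on $\{Y_{ij} \cap Z\}$ being a compatible system buys us). In such coordinates the blow-up of a simple subvariety is completely explicit (as in the proof of \cref{blowing-up}), and one reads off directly that passing to the strict transform commutes with intersecting with $Z$. The last assertion of the lemma then follows from \cref{simultaneous} applied to the subsystem $\{Y_{ij} \cap Z\}_{i=a,\dots,b}^{j=1,\dots,n_i}$ of $\{Y_{ij} \cap Z\}_{i=a,\dots,c}^{j=1,\dots,n_i}$.
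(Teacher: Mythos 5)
Your proposal follows essentially the same inductive structure as the paper's proof: induction on the length of the compatible system, noting that $\widetilde{Z}_1 = \mathrm{Bl}_{\bigcup_j (Y_{aj}\cap Z)} Z$ because the blow-up center $\bigcup_j Y_{aj}$ meets $Z$ in the smooth disjoint union $\bigcup_j (Y_{aj}\cap Z)$, that the strict transform of $Y_{ij}\cap Z$ equals $\widetilde{Y}_{ij}\cap \widetilde{Z}_1$, and that \cref{lem:new system} then applies in both $\widetilde{X}_1$ and $\widetilde{Z}_1$ to line up the induction hypotheses. Your extra remarks on verifying the strict-transform/intersection compatibility locally in the $\{U_\alpha\}$ charts, and on the closing application of \cref{simultaneous} for the subsystem, merely make explicit what the paper leaves implicit.
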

\begin{proof}
By induction on the length of  the compatible system. Assume that the lemma holds when the length is less than $c-a$. Since $\mu_1:\widetilde{X}_1\rightarrow X$ is the blow-up of the variety $\bigcup_{ j=1,\ldots,n_a}{Y}_{aj}$, we deduce that  $\mu_1| _{\widetilde{Z}_1}:\widetilde{Z}_1\rightarrow Z$  is the blow-up of the variety $\bigcup_{ j=1,\ldots,n_a}({Y}_{aj}\cap Z)$, where $\widetilde{Z}_1$ denotes the strict transform of $Z$ under $\mu_1$. By our definition of   the indices of the subvarieties \(Z\cap Y_{ij}\), $\mu_1|_{\widetilde{Z}_1}$ is the first blow-up of the canonical log resolution associated to  the compatible system $\{{Y}_{ij}\cap Z\}_{i=a,\ldots,c}^{j=1,\ldots,n_i}$ in $Z$, such that the strict transform of ${Y}_{ij}\cap Z$ is $\tilde{Y}_{ij}\cap \widetilde{Z}_1$. Then $\{\tilde{Y}_{ij}\cap \widetilde{Z}_1\}_{i=a+1,\ldots,c}^{j=1,\ldots,n_i}$ is a compatible system in $\widetilde{Z}_1$,  with the index $\tilde{Y}_{ij}\cap \widetilde{Z}_1$ equal to $i$. By the induction on the length, the canonical log resolution   associated to the compatible system $\{\tilde{Y}_{ij}\}_{i=a+1,\ldots,c}^{j=1,\ldots,n_i}$ in $\widetilde{X}_1$, which is the composition of the blow-ups,  $\tilde{X}_{b-a+1}\xrightarrow{\mu_{b-a+1}}\tilde{X}_{b-a}\xrightarrow{\mu_{b-a}} \cdots \xrightarrow{\mu_2} \widetilde{X}_1$, gives rise  to   the canonical log resolution associated to the compatible system $\{\tilde{Y}_{ij}\cap \widetilde{Z}_1\}_{i=a+1,\ldots,c}^{j=1,\ldots,n_i}$ in $\widetilde{Z}_1$. The lemma is thus proved.
\end{proof}
This resolution algorithm is also compatible with families. 
\begin{dfn}
	Let \(f:\xs\rightarrow S\) be a surjective smooth morphism between regular quasi-projective varieties, and let \(\ys\)  be (non necessarily irreducible) subvariety of \(\xs\) of pure dimension \(r\) such that its image under \(f\) is surjective.  We say that the family \(f:(\xs,\ys)\rightarrow S \)  is \emph{locally analytically trivial} if for any \(y\in \ys\), setting   \(s:=f(y)\) and \(X_s\) the fiber of \(f\), the following two conditions hold:
	\begin{thmlist}
		\item \label{def1} There exists  \(\mathscr{U}\subset \mathscr{X},U_s\subset S\) open neighborhoods of \(y,s\), such that \(f(\mathscr{U})=U_s\).  
		\item \label{def2} Write \(U:=\mathscr{U}\cap{X_s} \), which is an analytic open set of \(X_s\).  There exists a biholomorphism \(\theta:\mathscr{U}\to U\times U_s\) such that the following diagram is commutative:
\begin{eqnarray}\label{local trivial}
\xymatrix{
	\mathscr{U}\ar[r]^-{\theta}_-{\cong}\ar[dr]_{f}&  U\times U_s \ar[d]^{{\rm pr}_2}\ar[r]^-{{\rm pr}_1}&U\\
	&U_s
}
\end{eqnarray}
such that \((\mathcal{U},\ys\cap{U})\cong \big(U\times U_s,{\rm pr}_1^*(Y_U)\big)\) via \(\theta\), where \(Y_U:=\ys\cap U \).
	\end{thmlist}
%We say this family \emph{locally compatibly trivial} if we further assume that, 
%\begin{enumerate}[resume]
%	\item there exists a simple ideal \(J\) on \(\cb^r\), such that \(U \) is biholomorphic to some open subset \(W\subset \cb^r\), with \(\jc_{Y_U}\cong J|_W \). Here \(\jc_{Y_U}\) denotes the ideal sheaf of \(Y_U\) in \(U\),
%\end{enumerate}
\end{dfn}

\begin{lem}\label{lem:resolution family}
Let \(f:(\xs,\ys)\rightarrow S \)  be a locally analytically trivial family. Suppose that \(\ys= \bigcup_{i=a,\dots,c}^{j=1\dots, n_i}\ys_{ij} \) such that for any \(s\in S\), setting \(Y_{s,ij}=\ys_{ij}|_{X_s} \), the induced configuration \(\{Y_{s,ij}\}_{i=a,\dots,c}^{j=1\dots, n_i}\) is a compatible system on \(X_s\). Then we can perform the algorithm of canonical resolution  defined in \eqref{sequence}   for \(\{\ys_{ij}\}_{i=a,\dots,c}^{j=1\dots, n_i}\), in order to  obtain a birational map \(\mu_{\X}:\widetilde{\X}\to \X\), such that, for any \(s\in S\), the restriction
\[
\mu_\xs|_{X_s}:\mu_{\X}^{-1}(X_s)\rightarrow X_s
\] 
is  the canonical resolutions of the compatible system \(\{Y_{s,ij}\}_{i=a,\dots,c}^{j=1\dots, n_i}\), denoted by  \(\mu_s:\tilde{X}_s\rightarrow X_s \).
If one denotes the irreducible exceptional divisors of \(\mu_\X\) by \(\mathscr{E}_1\dots, \mathscr{E}_r\) then, the set of irreducible exceptional divisor of \(\mu_s\) is \(\big\{\mathscr{E}_1|_{\mu_{\X}^{-1}(X_s)}\dots, \mathscr{E}_r|_{\mu_{\X}^{-1}(X_s)}\big\}\)

%every exceptional divisor of \(\mu_s\) is of the form \(E:=\mathscr{E}_i|_{\mu_{\X}^{-1}(X_s)}\) for some \(i\in \{1,\dots,r\}\).
\end{lem}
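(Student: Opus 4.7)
The plan is to reduce the claim to the construction of Section 7.2 together with the functoriality Lemma \ref{functorial}. I will (a) verify that the total family $\{\ys_{ij}\}$ itself forms a compatible system on $\X$ with the natural indexation, (b) apply the canonical resolution algorithm of \eqref{sequence} to $\X$ to obtain $\mu_\X:\widetilde{\X}\to \X$, and (c) invoke Lemma \ref{functorial} along the fiber inclusions $X_s\hookrightarrow \X$ to identify $\mu_\X|_{\mu_\X^{-1}(X_s)}$ with $\mu_s$, while tracking exceptional divisors through the algorithm.

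The core step is (a). For axiom \cref{2} of Definition \ref{compatible}, the local analytic triviality gives, around any $y\in\ys$ with $s:=f(y)$, a chart $U\subset X_s$ from the covering satisfying \cref{2} for $\{Y_{s,ij}\}$ and a trivialization $\theta:\mathscr{U}\xrightarrow{\sim} U\times U_s$ from \eqref{local trivial}. Choosing $U_s$ biholomorphic to an open $W_s\subset\cb^{\dim S}$ containing the origin yields $\mathscr{U}\cong W\times W_s\subset\cb^{\dim \X}$; under this identification, each $\ys_{ij}\cap\mathscr{U}$ corresponds to the product of the simple variety $Y_{s,ij}^\alpha\subset \cb^{\dim X_s}$ with the second factor, which is still simple in $\cb^{\dim \X}$ (defined by the same coordinate equations, with the factor-$S$ coordinates unconstrained), and the ideal of $\ys\cap\mathscr{U}$ retains the simple form \eqref{simple sheaf}. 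For axiom \cref{1}, the same trivialization gives $\ys_{ij}\cap\ys_{i\ell}\cap\mathscr{U}\cong\pr_1^{-1}((Y_{s,ij}\cap Y_{s,i\ell})\cap U)$, so fiberwise emptiness or containment in some lower-index $Y_{s,i'j'}$ propagates to $\X$; an analysis on the finitely many irreducible components of $\ys_{ij}\cap\ys_{i\ell}$ supplies a uniform $(i',j')$ for each component.

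Granted (a), the algorithm of \eqref{sequence} produces $\mu_\X$ by successive blow-ups of smooth centers, and $\widetilde{\X}$ is smooth. For (c), I apply Lemma \ref{functorial} with $Z=X_s$, noting that each fiber is smooth since $f$ is smooth: local triviality gives that $\ys_{ij}$ is a pullback along $\pr_1$ near $X_s$, hence meets $X_s$ transversely, so the scheme-theoretic intersection $\ys_{ij}\cap X_s$ coincides with $Y_{s,ij}$; by hypothesis $\{Y_{s,ij}\}$ is a compatible system on $X_s$, so Lemma \ref{functorial} identifies $\mu_\X|_{\mu_\X^{-1}(X_s)}$ with $\mu_s$. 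For the exceptional divisor statement I argue by induction on the length: at each stage of the algorithm the center is a disjoint union $\bigcup_j \ys_{i_k j}$ (or its strict transform), and its irreducible exceptional divisor components $\mathscr{E}_k$ are indexed by $j$; compatibility of blow-ups with transverse base change ensures that the restriction of each $\mathscr{E}_k$ to $\mu_\X^{-1}(X_s)$ is exactly the corresponding irreducible exceptional divisor of $\mu_s$. The hard part is the verification in step (a), as the simple-ideal structure and the containment relations must be lifted from fibers to $\X$ in a coherent way; once this is done, the remainder reduces to Lemma \ref{functorial} and the standard compatibility of blow-ups with transverse base change.
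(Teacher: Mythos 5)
Your proposal takes a genuinely different route from the paper's proof. The paper proves the lemma by a direct induction on the length of the compatible system: at each step, it (i) uses the fiberwise disjointness of the lowest-index $\ys_{aj}$'s together with local triviality to conclude global disjointness, (ii) blows this union up, (iii) uses \cref{def2} to see that the restriction of this blow-up to each fiber is the first step of the fiberwise canonical resolution, and (iv) applies \cref{lem:new system} fiberwise to reduce the length by one. Crucially, the paper never needs to establish that $\{\ys_{ij}\}$ forms a compatible system \emph{on $\X$ itself}; it works entirely with the weaker fiberwise hypothesis plus local triviality. You instead frontload the argument by establishing the global compatible-system structure on $\X$ and then invoking \cref{functorial}, which buys modularity and a stronger intermediate result (that the family configuration is itself a compatible system), at the cost of a more delicate verification.

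That verification is where your argument is loose. For \cref{2} your product-chart construction is fine: the affine model $W\times W_s$ contains the origin and the ideal of $\ys\cap\mathscr{U}$ retains the simple form \eqref{simple sheaf} with the $S$-coordinates unconstrained. But for \cref{1}, the axiom requires that for each pair $j\neq\ell$ there be a \emph{single} $(i',j')$ with $\ys_{ij}\cap\ys_{i\ell}\subset\ys_{i'j'}$ (or the intersection be empty). Your phrase ``supplies a uniform $(i',j')$ for each component'' allows different components of $\ys_{ij}\cap\ys_{i\ell}$ to be captured by different $\ys_{i'j'}$'s, which does not verify \cref{1} as stated. What is true, and what you should say, is that by local triviality the witness $(i',j')$ is locally constant along $\ys_{ij}\cap\ys_{i\ell}$, hence constant on each connected component; so the global axiom holds once one restricts to a connected component of $S$ (or observes irreducibility of $\ys_{ij}\cap\ys_{i\ell}$ in the application). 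This is exactly the sort of point the paper's direct induction sidesteps, since it only ever needs the \emph{fiberwise} version of \cref{1}. Your use of \cref{functorial} with $Z=X_s$, and the identification $\mu_\X^{-1}(X_s)=\widetilde{X_s}$ via transversality of $X_s$ with the successive blow-up centers, is correct, and the exceptional-divisor bookkeeping by induction is also sound.
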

\begin{proof}
We will also prove by induction on the length of  the compatible system. Assume that the lemma holds when the length is less than $c-a$.	We make same notation in \cref{def2}. By the assumption of the lemma,   one observes that \( \{\ys_{aj}\}_{j=1,\ldots,n_a}  \) is disjoint from each other since this holds on each fiber. Define \(\mu_{\xs,1}:\tilde{\xs}_1\rightarrow\xs \) to be the blow-up of \( \{\ys_{aj}\}_{j=1,\ldots,n_a} \), with \(\{\tilde{\ys}_{ij}\}_{i=a+1,\dots,c}^{j=1\dots, n_i} \)   the strict transforms of \(\{{\ys}_{ij}\}_{i=a+1,\dots,c}^{j=1\dots, n_i}\).  By \cref{def2}, its restriction to each fiber \(X_s\) is precisely the first blow-up of the canonical log resolution of the compatible system \(\{Y_{s,ij}\}_{i=a,\dots,c}^{j=1\dots, n_i}\). From \cref{lem:new system}, the sets \(\{\tilde{\ys}_{ij}\}_{i=a+1,\dots,c}^{j=1\dots, n_i} \)  verify the conditions of the lemma with the length decreased by 1. Hence we prove the lemma. 
\end{proof}
\subsection{Application to the projectivized logarithmic cotangent bundle}\label{indeterminacies}

In this section, we prove \cref{prop:ResolutionAlgo}.  From \Cref{cor:MoreComponents} we can assume that \(c\leqslant n\).
\begin{lem} \label{resolution}
Let \((X,D)\) be a log pair. The family 
	$\{\tilde{D}_J\}_{\varnothing\subsetneq J\subset \{1,\ldots,c\}}$ is a compatible system in ${X}_1(D)$ when the index of \(\tilde{D}_J\) is defined to be \(\# J\). Its lowest index is \(1\) and its length  is \(c\). Moreover, for any $\varnothing\neq I\subsetneq \{1,\ldots,c\}$,  $\{\tilde{D}_J\}_{\varnothing\neq J\subset I}$  is a subsystem  of length \(\#I \). In particular, the canonical log resolution  provides a birational map  $\hat{\mu}:\widetilde{X}_1(D)^c\to X_1(D)$  which is  a simultaneous log resolution for the ideal sheaves $\{\js_I\}_{\varnothing\neq I\subset \{1,\ldots,c\}}$.
\end{lem}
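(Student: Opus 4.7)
The plan is to verify the two axioms of \cref{compatible} for the family $\{\tilde{D}_J\}_{\varnothing\subsetneq J\subset\{1,\dots,c\}}$ indexed by cardinality $\#J$, which immediately gives lowest index $1$ and length $c$. Condition \cref{1} is a direct consequence of \eqref{intersection}: if $\#I=\#J$ and $I\neq J$, then either $I\cap J=\varnothing$ (so $\tilde{D}_I\cap \tilde{D}_J=\varnothing$) or $\tilde{D}_I\cap \tilde{D}_J\subset \tilde{D}_{I\cap J}$, and the hypothesis forces $\#(I\cap J)<\#I$, so the containment witness has strictly smaller index.

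The main technical point is condition \cref{2}, which demands coordinate charts in which $\bigcup_I\tilde{D}_I$ cuts out a simple ideal. I will use the explicit local description \eqref{local}: near a point $x\in X$ lying on exactly the components $D_1,\dots,D_k$, the trivialization $\pi_X^{-1}(U)\cong U\times\P^{n-1}$ identifies $\tilde{D}_I$ (for $I\subset\{1,\dots,k\}$) with $\{z_i=0,\ i\in I;\ \xi_j=0,\ j\notin I\}$. In the affine chart $\{\xi_\ell=1\}$ with $\ell\in\{1,\dots,k\}$, only those $I$ containing $\ell$ meet the chart, and writing $I=\{\ell\}\cup I'$ with $I'\subset\{1,\dots,k\}\setminus\{\ell\}$ arbitrary yields
\[\bigcup_{I'}\tilde{D}_I=\bigl(z_\ell=0,\ \xi_{k+1}=\dots=\xi_n=0,\ z_i\xi_i=0\ \forall\,i\in\{1,\dots,k\}\setminus\{\ell\}\bigr),\]
whose defining ideal is exactly of the simple form \eqref{simple sheaf}, with $p=n-k+1$ linear generators $z_\ell,\xi_{k+1},\dots,\xi_n$ and $k-1$ product generators $z_i\xi_i$. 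For $\ell\in\{k+1,\dots,n\}$ no $\tilde{D}_I$ meets the chart and the local ideal is trivial. Since these affine charts cover $X_1(D)$, \cref{2} follows.

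Finally, for any $\varnothing\neq I\subsetneq\{1,\dots,c\}$ I will verify that $\{\tilde{D}_J\}_{\varnothing\neq J\subset I}$ is a subsystem in the sense of \cref{sub-compatible}: condition \eqref{condition} requires that if $J\not\subset I$ and $L\subset I$ with $\#L\geqslant \#J$, then $\tilde{D}_J\cap \tilde{D}_L$ is empty or contained in some $\tilde{D}_M$ with $M\subset I$ and $\#M<\#J$. By \eqref{intersection}, $\tilde{D}_J\cap \tilde{D}_L\subset \tilde{D}_{J\cap L}$ when non-empty; since $J\not\subset I\supset L$ implies $J\not\subset L$, one has $J\cap L\subsetneq J$, so $M:=J\cap L\subset I$ has $\#M<\#J$ as required. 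Invoking \cref{simultaneous} then yields that the canonical log resolution $\hat{\mu}:\widetilde{X}_1(D)^c\to X_1(D)$ is a simultaneous log resolution of $\{\js_I\}_{\varnothing\neq I\subset\{1,\ldots,c\}}$. The only non-formal step is the simple-ideal computation in the second paragraph; the rest follows from \eqref{intersection} and the subsystem machinery of \cref{simultaneous}.
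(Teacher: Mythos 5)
Your proof is correct and follows the same strategy as the paper: condition (i) of \cref{compatible} and the subsystem axiom of \cref{sub-compatible} are both read off from \eqref{intersection}, and condition (ii) from the local description \eqref{local}. You are somewhat more explicit on condition (ii), carrying out the affine-chart computation in $\{\xi_\ell=1\}$ that the paper compresses into a reference to \eqref{eq:lift ideal}, but the underlying argument is identical.
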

\begin{proof}
  For any $\varnothing\neq I,J\subset \{1,\ldots,c\}$ with $1\leqslant \#I=\#J<c$, and $I\neq J$, note that $\#(I\cap J)<\#I=\#J<\#(I\bigcup J)$ and by  \eqref{intersection} one has
\begin{eqnarray}\label{eq:intersection}
\tilde{D}_I\cap \tilde{D}_J\begin{cases}
=\varnothing  & \quad \text{if } I\cap J=\varnothing\\
\subset   \tilde{D}_{I\cap J}   & \quad \text{if } I\cap J\neq\varnothing
\end{cases}
\end{eqnarray} 
where in the second case, $1\leqslant \#(I\cap J)$. In particular, when \(\#I=\#J=1\), we always have \(\tilde{D}_I\cap \tilde{D}_J=\varnothing\). 

Now, for any $\varnothing\neq J\subset \{1,\ldots,c\}$,  we define the index of $\tilde{D}_J$   to be $\#J$.	For any \(i=1,\ldots,c\), set \( S_i:=\{I\subset\{1,\ldots,c \}\ /\ \#I=i\} \), equipped with the \emph{lexicographical ordering}, which is a total order. Then we can write  $\{\tilde{D}_J\}_{\varnothing\neq J\subset \{1,\ldots,c\}}=\{Y_{ij}\}_{i=1,\ldots,c}^{j=1,\ldots,n_i}$ with \(n_i=\binom{c}{i}\) in a canonical way, such that, for any $I,J\in S_i$ with $I< J$, there exists \(1\leqslant a<b\leqslant n_i  \) such that \(D_I=Y_{ia}\) and \(D_J=Y_{ib}\). With this setting, from \eqref{eq:intersection} one can see that, for any $i>1$ and any $1\leqslant j<l\leqslant n_i$, either $Y_{ij}\cap Y_{i\ell}=\varnothing$, or there exists $1\leqslant i'<i$ and $1\leqslant j'\leqslant n_{i'}$ such that $Y_{ij}\cap Y_{i\ell}\subset Y_{i'j'}$. Thus the set \(\{Y_{ij}\}_{i=1,\ldots,c}^{j=1,\ldots,n_i}\) satisfies \cref{1}. Since we can cover \(X_1(D)\) by open sets with \(\js_I\) in the form of  \eqref{eq:lift ideal}, which satisfies \cref{2}, we conclude that $\{\tilde{D}_J\}_{\varnothing\neq J\subset \{1,\ldots,c\}}$ is a compatible system in ${X}_1(D)$. We thus prove the first statement.

To prove the second statement, without loss of generality, we can assume that  $ I= \{1,\ldots,b\}$ for some \(1\leqslant b<c\). From \eqref{intersection} we first observe that \(\{\tilde{D}_J\}_{\varnothing\neq J\subset I}\) satisfies \cref{1}, and thus is a compatible system on \(X_1(D)\). Within the above  representation \(\{\tilde{D}_J\}_{\varnothing\neq J\subset \{1,\ldots,c\}}=\{Y_{ij}\}_{i=1,\ldots,c}^{j=1,\ldots,n_i}\),   one has $\{\tilde{D}_J\}_{\varnothing\neq J\subset I}=\{Y_{ij}\}_{i=1,\ldots,b}^{j=1,\ldots,r_i}$, where \(r_i=\binom{b}{i} \). For any $i=1,\ldots,b$, $j=r_i+1,\ldots,n_i$, $k=i,i+1,\ldots,b$, and $\ell=1,\ldots,r_k$,  let us denote by
$\tilde{D}_J=Y_{ij}$, $\tilde{D}_K=Y_{k\ell}$. If follows from the definition that 
$J\not\subset I$, $\#J=i$, and $K\subset I$, $\#K=k\geqslant i$, and  from \eqref{intersection} that either $\tilde{D}_J\cap \tilde{D}_K=\varnothing$, in which case $J\cap K=\varnothing$, or $\tilde{D}_J\cap \tilde{D}_K=\tilde{D}_{J\cap K}$, in which case $1\leqslant \#(J\cap K)<\#J$. Thus by \Cref{sub-compatible} $\{\tilde{D}_J\}_{\varnothing\neq J\subset I}$ is a subsystem of $\{\tilde{D}_J\}_{\varnothing\neq J\subset \{1,\ldots,c\}}$. This proves the second statement. 

 Since the length of the compatible system
$\{\tilde{D}_J\}_{\varnothing\subsetneq J\subset \{1,\ldots,c\}}$ is \(c\), after a successive blow-ups of the varieties of the lowest index in each newly-obtained compatible system
$$
\widetilde{X}_1(D)^{(c)}\xrightarrow{\mu_{c}}\widetilde{X}_1(D)^{(c-1)}\xrightarrow{\mu_{c-1}} \cdots \xrightarrow{\mu_2} \widetilde{X}_1(D)^{(1)} \xrightarrow{\mu_1} X_1(D),
$$
their composition $\hat{\mu}: \widetilde{X}_1(D)^{(c)}\rightarrow X_1(D)$ is the \emph{canonical log resolution} of the ideal sheaf $\js_D$ associated to the compatible system $\{\tilde{D}_J\}_{\varnothing\subsetneq J\subset \{1,\ldots,c\}}$ in ${X}_1(D)$. The simultaneous log resolution for \(\js_I \) follows from \Cref{simultaneous} directly.   This concludes the proof of the lemma.
\end{proof}

As we saw in \eqref{Diagram}, we only need to find a simultaneous resolution for the indeterminacies of the rational maps \(X_1(D)\dashrightarrow X_1(D_i) \) for every \(i=1,\ldots,c \), which by \cref{equivalent simultaneous}  is equivalent to finding a simultaneous  log resolution for the ideal sheaves $\{\js_i\}_{i=1,\ldots,c}$ defined in \Cref{obstruction}. Observe that for each \(i=1,\ldots,c \),  the length of the subsystem \(\{\tilde{D}_J\}_{\varnothing\neq J\subset \{i\}^\complement}\) is equal to \(c-1\) by \Cref{resolution}. Thus by \eqref{sequence} the composition of $(c-1)$-steps of the blow-ups, $\mu_{c-1}\circ \mu_{c-2}\circ\cdots\circ \mu_1:\widetilde{X}_{1}(D)^{(c-1)}\rightarrow {X}_1(D)$, is a log resolution for each \(\js_i \). We will denote this simultaneous log resolution by $\mu:\widehat{X}_1(D)\rightarrow {X}_1(D)$, and call it \emph{the minimal  resolution}.\\
%\end{rem}

From this definition, it follows that  \Cref{resolution}  already implies  \cref{part1}. From \cref{lem:LocTrivial},  given  a family of smooth pairs \((\X,\ds)\stackrel{\rho}{\to} S\) as defined in \cref{def:smooth pair}, denoting by \((X_s,D_s)\) the fiber of \(\rho\), one obtains a locally analytically trivial family  \(\big(\pb\big(\Omega_{\xs/S}(\log \ds)\big),\bigcup_{I\subset \{1,\ldots,c\}}\tilde{\ds}^{\rm rel}_I\big)\stackrel{\rho_1}{\to} S 
	\) such that for any \(s\in S \),  the fiber of \(\rho_1\) is \( \big(X_{s,1}(D_s),\bigcup_{I\subset \{1,\ldots,c\}}\tilde{D}_{s,I}\big) \). \cref{part3}  follows from  \cref{lem:resolution family}.

 Let us now see how to obtain the announced fonctoriality property in \cref{part2}.
Let $Z$ be any regular subvariety of $X$, such that $\sum_{i=1}^{c}D_i|_Z$  is also a simple normal crossing divisor on $Z$. Denote by $E_i:=D_i|_Z$, and $E_I:=\bigcap_{i\in I}E_i$ for any $\varnothing\neq I\subset \{1,\ldots,c\}$. Note that ${Z}_1(E)$ is a regular subvariety of ${X}_1(D)$. Moreover,  for  $\tilde{E}_I\subset Z_1(E)$ associated to $E_I$ as defined in \Cref{meromorphic connection}, one has $\tilde{E}_I=Z_1(E)\cap \tilde{D}_I$. Thus  $\{ \tilde{D}_J\cap Z_1(E)\}_{\varnothing\neq J\subset \{1,\ldots,c\}}$ is a compatible system in $Z_1(E)$, with the index of $\tilde{D}_J\cap Z_1(E)$ equal to $\#J$. It then follows from \cref{functorial} that the restriction of the canonical log resolution  (resp. minimal  resolution)  associated to $\{ \tilde{D}_J\}_{\varnothing\neq J\subset \{1,\ldots,c\}}$ in $X_1(D)$, gives rise to   the canonical log resolution (resp. minimal  resolution)  associated to  $\{ \tilde{D}_J\cap Z_1(E)\}_{\varnothing\neq J\subset \{1,\ldots,c\}}$ in $Z_1(E)$. In particular, for any $i=1,\ldots,c$, let us denote by $\mu_Z:\widehat{Z}_1(E)\rightarrow Z_1(E)$ the minimal resolution of $Z_1(E)$, and one has the following commutative diagram
\begin{eqnarray}\label{commutative}
\xymatrix@!0{
	& X_1(D) \ar@{-->}[rrr]^{\gamma_i}
	& && X_1(D_i)
	\\
	&&&&\\
	\widehat{X}_1(D) \ar^{\mu}[uur] \ar^-{\nu_i}[uurrrr]
	& & &  
	\\
	& Z_1(E) \ar@{-->}[rrr]^{\gamma_{Z,i}} \ar@{^{(}->}[uuu] |!{[ul];[uuurr]}\hole
	& && Z_1(E_i)  \ar@{^{(}->}[uuu]
	\\
	&&&&\\
	\widehat{Z}_1(E)\ar@{^{(}->}[uuu] \ar^-(.8){\mu_Z}[uur]\ar^-{\nu_{Z,i}}[uurrrr]
	& &  &
}
\end{eqnarray}

Lastly, let us conclude by observing that  this resolution procedure resolves the indeterminacies of  the  different maps \(\gamma_I\) defined in \eqref{rational} step by steps in the following sense.
%Let us denote by $\mu: \widehat{X}_1(D)^{(c)}\rightarrow X_1(D)$ the \emph{canonical log resolution} of the ideal sheaf $\js_D$ associated to the system of compatible varieties $\{\tilde{D}_J\}_{\varnothing\subsetneq J\subset \{1,\ldots,c\}}$ in ${X}_1(D)$, obtained by the successive blow-ups of the varieties of the lowest index in each newly-obtained system of compatible varieties
%$$
%%\widehat{X}_1(D)^{(c)}\xrightarrow{\mu_{c}}\widehat{X}_1(D)^{(c-1)}\xrightarrow{\mu_{c-1}} \cdots \xrightarrow{\mu_2} \widehat{X}_1(D)^{(1)} \xrightarrow{\mu_1} X_1(D).
%$$
%In addition, for any $\varnothing\neq I\subsetneq \{1,\ldots,c\}$, if $\#I=b$,  then the largest index in the system $\{\tilde{D}_J\}_{\varnothing\neq J\subset I}$ is $b$, and it follows from the proof of \cref{simultaneous} that after $b$-step blow-ups
%\[
%\widehat{X}_1(D)^{(b)}\xrightarrow{\mu_{b}}\widehat{X}_1(D)^{(b-1)}\xrightarrow{\mu_{b-1}} \cdots \xrightarrow{\mu_2} \widehat{X}_1(D)^{(1)}\xrightarrow{\mu_1} X_1(D).
%%\]
%the composition gives rise to the log resolution of the ideal sheaf $\js_I$. 
Say that 
$$D\supset  \sum_{i\in I_{1}}D_i\supset \sum_{i\in I_{2}}\supset \ldots\supset \sum_{i\in I_{c-1}}D_{i}$$ is a \emph{filtration of compatible divisors} if $I_{1}\supset I_{2}\supset\cdots\supset I_{c-1}$ is a sequence of subsets of $\{1,\ldots,c\}$ with the cardinalities $\#I_i=c-i$. From \Cref{def:log pair}, one then has a sequence of rational maps
\[
X_1(D)\dashrightarrow X_1\big(D(I_1)\big)\dashrightarrow \ldots\dashrightarrow X_1\big(D(I_{c-1})\big)\dashrightarrow X_1:=\pb(\Omega_X)
\]
Since $\#I_j^\complement=j$, by \Cref{simultaneous,resolution}, after taking $j$-blow-ups \[
\widetilde{X}_1(D)^{(j)}\xrightarrow{\mu_{j}}\widetilde{X}_1(D)^{(j-1)}\xrightarrow{\mu_{j-1}} \cdots \xrightarrow{\mu_2} \widetilde{X}_1(D)^{(1)}\xrightarrow{\mu_1} X_1(D),
\]
the composition  resolves $\js_{I_j^\complement}$ defined in \cref{obstruction}, and by  \cref{resolve},     it also resolves the indeterminacy of the rational map $ {X}_1(D)\dashrightarrow  {X}_1\big(D(I_j)\big)$. Namely, we have the following commutative diagram for any filtration of compatible divisors:
\[
\xymatrix{
	\widetilde{X}_{1}(D)^{(c)} \ar[d]^-{\mu_{c}}\ar[ddddrrrr]&&&&\\
	\!\!\!\!\!\!\!\!\!\!\!\!\!\!\!\!\!\!\!\!\! \widehat{X}_1(D):=\widetilde{X}_{1}(D)^{(c-1)} \ar[d]^-{\mu_{c-1}}\ar@/_2pc/[ddd]_{\mu}\ar[dddrrr]&&&&\\
	\vdots \ar[d]^{\mu_2} &&&\\
	\widetilde{X}_{1}(D)^{(1)} \ar[d]^-{\mu_1} \ar[dr] &\ddots&&\\
	X_1(D)\ar@{-->}[r]^-{\gamma_{I_1^\complement}}  \ar@{-->}@/_1.5pc/[rrr]_{\gamma_{I_{c-1}^\complement}} &  {X}_{1}\big(D(I_1)\big) \ar@{-->}[r]  & \cdots \ar@{-->}[r]  &  {X}_{1}\big(D(I_{c-1})\big) \ar@{-->}[r] & \pb(\Omega_X).
}
\]

\bibliography{biblio}
\bibliographystyle{plain}

\end{document}